\documentclass{article}


\usepackage{amsmath}
\usepackage{graphicx}
\usepackage{float}
\usepackage{enumerate}
\usepackage{amssymb}
\usepackage{amsfonts}
\usepackage{mathtools}
\usepackage{xcolor}
\usepackage{amsmath}
\usepackage{algorithm,algpseudocode}
\usepackage{subcaption}
\usepackage{caption}
\usepackage{verbatim}
\usepackage{bm}
\usepackage{amsthm} 
\usepackage[toc,page]{appendix}
\usepackage{hyperref} 
\usepackage{cleveref}
\usepackage[export]{adjustbox}
\usepackage{rotating}
\usepackage{multirow}
\usepackage{diagbox}
\usepackage{stmaryrd}
\usepackage[utf8]{inputenc}
\usepackage{xstring}
\usepackage{ifthen}
\usepackage{comment}
\usepackage[normalem]{ulem}

\usepackage{cancel}


\usepackage{pgf,tikz,pgfplots}
\usetikzlibrary{decorations.markings}
\pgfplotsset{compat=1.15}
\usepackage{mathrsfs}
\usetikzlibrary{arrows}

\newboolean{hidebool}
\setboolean{hidebool}{false}

\ifthenelse{\boolean{hidebool}}
  {
   \newcommand{\hide}[1]{}%
  }
  {
   \newcommand{\hide}[1]{#1}%
  }

\DeclareMathOperator{\tr}{tr}

\DeclareMathOperator{\rank}{rank}
\DeclareMathOperator{\diag}{diag}
\DeclareMathOperator{\range}{range}

\renewcommand*{\eqref}[1]{(\ref{#1})}

\newtheorem{theorem}{Theorem}[section]
\newtheorem{corollary}[theorem]{Corollary}
\newtheorem{lemma}[theorem]{Lemma}

\newtheorem{setting}{Setting}[section]

\theoremstyle{definition}

\title{Randomized low-rank approximation \\ of monotone matrix functions\footnote{This work has been supported by the SNSF research project \textit{Fast algorithms from low-rank updates}, grant number: 200020\_178806. Institute of Mathematics, EPF Lausanne, 1015 Lausanne, Switzerland. E-mails: \href{mailto:david.persson@epfl.ch}{david.persson@epfl.ch}, \href{mailto:daniel.kressner@epfl.ch}{daniel.kressner@epfl.ch}}}

\author{David Persson\footnotemark[1] \and Daniel Kressner\footnotemark[1]}

\begin{document}

\maketitle

\begin{abstract}
This work is concerned with computing low-rank approximations of a matrix function $f(\bm{A})$ for a large symmetric positive semi-definite matrix $\bm{A}$, a task that arises in, e.g., statistical learning and inverse problems. The application of popular
randomized methods, such as the randomized singular value decomposition or the Nyström approximation, to $f(\bm{A})$ 
requires multiplying $f(\bm{A})$ with a few random vectors. A significant disadvantage of such an approach, matrix-vector products with $f(\bm{A})$ are considerably more expensive than matrix-vector products with $\bm{A}$, even when carried out only approximately via, e.g., the Lanczos method.
In this work, we present and analyze funNystr\"om, a simple and inexpensive method that constructs a low-rank approximation of $f(\bm{A})$ directly from a Nystr\"om approximation of $\bm{A}$, completely bypassing the need for matrix-vector products with $f(\bm{A})$. It is sensible to use funNystr\"om whenever $f$ is monotone and satisfies $f(0) = 0$.
Under the stronger assumption that $f$ is operator monotone, which includes the matrix square root $\bm{A}^{1/2}$ and the matrix logarithm $\log(\bm{I}+\bm{A})$,
we derive probabilistic bounds for the error in the Frobenius, nuclear, and operator norms. These bounds confirm the numerical observation that funNystr\"om tends to return an approximation that compares well with  the best low-rank approximation  of 
$f(\bm{A})$. Furthermore, compared to existing methods, funNyström requires significantly fewer matrix-vector products with $\bm{A}$ to obtain a low-rank approximation of $f(\bm{A})$, without sacrificing accuracy or reliability.
 Our method is also of interest when estimating quantities associated with $f(\bm{A})$, such as the trace or the diagonal entries of $f(\bm{A})$. In particular, we propose and analyze funNystr\"om++, a combination of funNystr\"om with the recently developed Hutch++ method for trace estimation.
\end{abstract}



    

\section{Introduction}

Matrix functions appear in many areas of applied mathematics, such as differential equations~\cite{havel1994matrix,highamscaleandsquare,sidje1999numerical}, network analysis~\cite{estrada2000characterization,estradahigham}, statistical learning~\cite{alexanderian2014optimal,wenger2021reducing}, nuclear norm estimation~\cite{ubarusaad,ubaru2017applications}, and matrix equations~\cite{roberts1980linear}. Given a symmetric matrix $\bm{A} \in \mathbb{R}^{n \times n}$ with spectral decomposition $\bm{A} = \bm{U} \bm{\Lambda} \bm{U}^T$, and a function $f$ defined on the eigenvalues of $\bm{A}$, the matrix function $f(\bm{A})$ is defined as
\begin{equation*}
    f(\bm{A}) = \bm{U} f(\bm{\Lambda}) \bm{U}^T, \quad f(\bm{\Lambda}) = \begin{bmatrix} f(\lambda_1) & & \\ 
    & \ddots & \\
    & & f(\lambda_n) \end{bmatrix}.
\end{equation*}
Explicitly computing $f(\bm{A})$ via this definition requires $O(n^3)$ operations using standard algorithms~\cite{matrixcomputations}, which becomes prohibitively expensive for larger $n$.

Quite a few applications of matrix functions only require quantities associated with $f(\bm{A})$ instead of the full matrix function. Notable examples include the trace $\tr(f(\bm{A}))$ and the diagonal elements of $f(\bm{A})$, which can be estimated with Monte Carlo methods~\cite{baston2022stochastic,diagest,cortinoviskressner,hallman2022monte,ubarusaad,ubaru2017applications}. In recent years, there has been increased attention to the use of randomized low-rank approximation techniques in this context, for estimating these quantities~\cite{lizhu,saibaba} or as a variance reduction technique for Monte Carlo methods~\cite{chen,jiang2021optimal,hutchpp,AHutchpp}. These techniques facilitate matrix-vector products (mvps) with random vectors to construct cheap, yet accurate, approximations of matrices with small numerical rank, see~\cite{gittensmahoney,rsvd,nakatsukasa2020fast,tropp2017fixed} for examples. Applied to a matrix function, randomized low-rank approximation requires to perform mvps with $f(\bm{A})$, a nontrival task for large $n$. Usually only $\bm{A}$ itself can be accessed directly via mvps and one needs to resort to approximate methods, such as Lanczos~\cite[Chapter 13]{highamfunctions} and block Lanczos methods~\cite{block_lanczos}. As the error of Lanczos is linked to  polynomial approximations of $f$~\cite[Proposition 6.3]{saaditerative}, one may observe slow convergence for ``difficult'' functions $f$, for example when $f$ has a singularity close to the eigenvalues of $\bm{A}$. To avoid this, one could resort to rational approximation such as rational Krylov subspace methods \cite{guttel}, but these methods require the solution of a (shifted) linear system with $\bm{A}$ in every iteration, which comes with challenges on its own.
Even when Lanczos converges quickly and only requires a few iterations to reach the desired accuracy, the corresponding cost for approximating $f(\bm{A})$ can still be significantly more expensive than obtaining a randomized low-rank approximation of $\bm{A}$ itself. %

For certain functions $f$ it is possible to obtain a low-rank approximation of $f(\bm{A})$ directly from a low-rank approximation of $\bm{A}$, a key observation that potentially allows us to completely bypass the need for performing mvps with $f(\bm{A})$. The following basic lemma provides a first result in this direction, for the special case of \emph{best} low-rank approximations (with respect to a unitarily invariant norm).  
\begin{lemma}\label{lemma:rankpreserve}
Given a symmetric positive semi-definite (SPSD) matrix $\bm{A} \in \mathbb{R}^{n \times n}$ consider the best rank-$k$ approximation $\bm{U}_1 \bm{\Lambda}_1 \bm{U}_1^T$, where 
 $\bm{\Lambda}_1\in \mathbb{R}^{k \times k}$ is a diagonal matrix containing the $k$ largest eigenvalues of $\bm{A}$ on the diagonal and
$\bm{U}_1 \in \mathbb{R}^{n \times k}$ contains an orthonormal basis of the corresponding eigenvectors. 
Then, for monotonically increasing $f: [0,\infty) \mapsto [0,\infty)$ it holds that 
$\bm{U}_1 f(\bm{\Lambda}_1) \bm{U}_1^T$ is a best rank-$k$ approximation of $f(\bm{A})$. If, in addition, $f(0) = 0$ then $f(\bm{U}_1 \bm{\Lambda}_1 \bm{U}_1^{T}) = \bm{U}_1f(\bm{\Lambda}_1)\bm{U}_1^T$.
 \end{lemma}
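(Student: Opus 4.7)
The plan is to reduce the statement to the Eckart--Young--Mirsky theorem by observing that $f(\bm{A})$ and $\bm{A}$ share a simultaneous eigenbasis, and that a monotone non-negative $f$ preserves the ordering of the eigenvalues.

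First I would fix the spectral decomposition $\bm{A} = \bm{U}\bm{\Lambda}\bm{U}^T$ with eigenvalues $\lambda_1\ge \lambda_2\ge\cdots\ge\lambda_n\ge 0$ (nonnegativity coming from SPSD). By the definition of the matrix function recalled in the introduction, $f(\bm{A}) = \bm{U} f(\bm{\Lambda}) \bm{U}^T$, so the eigenvalues of $f(\bm{A})$ are $f(\lambda_1),\ldots,f(\lambda_n)$ with the \emph{same} eigenvectors as $\bm{A}$. Because $f$ is monotonically increasing on $[0,\infty)$, we have $f(\lambda_1) \ge f(\lambda_2) \ge \cdots \ge f(\lambda_n) \ge 0$, so the top-$k$ eigenvalues of $f(\bm{A})$ are precisely $f(\lambda_1),\ldots,f(\lambda_k)$, with eigenvector basis given by the columns of $\bm{U}_1$. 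Applying Eckart--Young--Mirsky to $f(\bm{A})$ (which is SPSD, since $f\ge 0$), its best rank-$k$ approximation with respect to any unitarily invariant norm is obtained by truncating to these top $k$ eigenpairs, which is exactly $\bm{U}_1 f(\bm{\Lambda}_1)\bm{U}_1^T$.

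For the second claim, I would note that under the additional assumption $f(0) = 0$, the matrix $\bm{U}_1\bm{\Lambda}_1\bm{U}_1^T$ is itself SPSD with eigenvalues $\lambda_1,\ldots,\lambda_k,0,\ldots,0$. Extending $\bm{U}_1$ to an orthogonal matrix $\tilde{\bm{U}} = [\bm{U}_1 \ \bm{U}_2]$ and writing $\bm{U}_1\bm{\Lambda}_1\bm{U}_1^T = \tilde{\bm{U}} \operatorname{diag}(\bm{\Lambda}_1,\bm{0})\tilde{\bm{U}}^T$, the spectral definition gives
\begin{equation*}
f(\bm{U}_1\bm{\Lambda}_1\bm{U}_1^T) = \tilde{\bm{U}}\operatorname{diag}(f(\bm{\Lambda}_1), f(0)\bm{I})\tilde{\bm{U}}^T = \tilde{\bm{U}}\operatorname{diag}(f(\bm{\Lambda}_1), \bm{0})\tilde{\bm{U}}^T = \bm{U}_1 f(\bm{\Lambda}_1)\bm{U}_1^T,
\end{equation*}
which is the desired identity.

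There is no substantial obstacle: the only subtlety is ensuring the monotonicity assumption is used correctly to preserve the ordering of eigenvalues so that the Eckart--Young--Mirsky truncation is indexed by the same top-$k$ eigenvectors of $\bm{A}$, and ensuring SPSD of $f(\bm{A})$ so that Eckart--Young--Mirsky applies in every unitarily invariant norm (which follows because $f$ maps $[0,\infty)$ into $[0,\infty)$).
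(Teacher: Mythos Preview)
Your proof is correct and follows essentially the same approach as the paper: both arguments use that $f(\bm{A})$ shares eigenvectors with $\bm{A}$, that monotonicity of $f$ preserves the eigenvalue ordering so that the top-$k$ truncation of $f(\bm{A})$ is $\bm{U}_1 f(\bm{\Lambda}_1)\bm{U}_1^T$, and that $f(0)=0$ kills the complementary block when applying $f$ to $\bm{U}_1\bm{\Lambda}_1\bm{U}_1^T$. The only cosmetic difference is that you name Eckart--Young--Mirsky explicitly, whereas the paper simply invokes the spectral decomposition and the characterization of the best rank-$k$ approximation implicitly.
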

 \begin{proof}
 By the spectral decomposition of $\bm A$, we can write $\bm A = \bm{U}_1 \bm{\Lambda}_1 \bm{U}_1^T + \bm{U}_2 \bm{\Lambda}_2 \bm{U}_2^T$, with diagonal $\bm \Lambda_2$ and orthonormal $\bm U_2$. Because the first term is a best low-rank approximation, none of the eigenvalues of $\bm \Lambda_2$ is larger than any of the eigenvalues of $\bm \Lambda_1$. Because of monotonicity, the same statement holds for the relation between the eigenvalues of $f(\bm \Lambda_2)$ and $f(\bm \Lambda_1)$. Using the spectral decomposition $f(\bm{A}) = \bm{U}_1 f(\bm{\Lambda}_1) \bm{U}_1^T + \bm{U}_2 f(\bm{\Lambda}_2) \bm{U}_2^T$, this implies the first claim of the lemma. The final claim is a consequence of $f(0) = 0$, since $f(\bm{U}_1 \bm{\Lambda}_1 \bm{U}_1^T) = \bm{U}_1 f(\bm{\Lambda}_1) \bm{U}_1^T + f(0)\bm{U}_2\bm{U}_2^T = \bm{U}_1 f(\bm{\Lambda}_1) \bm{U}_1^T$.
 \end{proof}
%

The result of Lemma~\ref{lemma:rankpreserve} is constrained to \emph{best} rank-$k$ approximations and does not extend to the quasi-optimal rank-$k$ approximations $\widehat{\bm{A}}$ of $\bm{A}$ usually returned by (randomized) numerical algorithms. One still hopes that 
\begin{equation}\label{eq:quasioptimal}
    \|f(\bm{A}) - f(\widehat{\bm{A}})\|
\end{equation}
is small when $\|\bm{A} - \widehat{\bm{A}}\|$ is small for a unitarily invariant norm $\|\cdot\|$. A similar idea was used in \cite{saibaba}, which analyzes approximations of the form 
$
    \tr(\log(\bm{I} + \widehat{\bm{A}})) \approx \tr(\log(\bm{I} + \bm{A}));
$ see also \cite{lizhu}. 

In this work, we will derive bounds for the approximation~\eqref{eq:quasioptimal} when $f$ is an \emph{operator monotone} function satisfying $f(0) = 0$. A function $f$ is called operator monotone if $\bm{B} \succeq \bm{C}$ for symmetric $\bm{B}, \bm{C} \in \mathbb{R}^{n \times n}$ implies $f(\bm{B}) \succeq f(\bm{C})$ \cite[p.112]{bhatia}, where $\bm{B} \succeq \bm{C}$ means that $\bm{B}-\bm{C}$ is SPSD; see \cite[Definition 7.7.1]{matrixanalysis}. Trivially, any operator monotone function  is monotonically increasing, but the converse is not true. For example, the functions $\exp(x)-1$ and $x^2$ are monotonically increasing on $[0,\infty)$ but not operator monotone. Examples of operator monotone functions include $\sqrt{x}, \log(1+x)$ and $\frac{x}{x + \lambda}$ for $\lambda > 0$; see~\cite[Sec. V.1]{bhatia}. In the following, we briefly highlight a few of the numerous applications for these functions. 

\subsubsection*{Matrix square root}
Matrix square roots of SPSD matrices play an important role in sampling from Gaussian distributions. If $\bm{\omega} \sim N(\bm{0},\bm{I}_n)$ and $\bm{A} = \bm{G}\bm{G}^T$ then $\bm{\psi} = \bm{\mu} + \bm{G}\bm{\omega} \sim N(\bm{\mu},\bm{A})$, and  a possible choice is $\bm{G} = \bm{A}^{1/2}$ \cite{pleiss2020fast}. Having a low-rank approximation $\widehat{\bm{A}}^{1/2}$ of $\bm{A}^{1/2}$ at hand allows us to cheaply sample from $\widehat{\bm{\psi}} = \bm{\mu} + \widehat{\bm{A}}^{1/2} \bm{\omega} \sim N(\bm{\mu},\widehat{\bm{A}})$. Moreover, the mean-squared error remains small for an accurate approximation $\widehat{\bm{A}}^{1/2}$: 
\begin{equation*}
    \mathbb{E}\|\bm{\psi}-\widehat{\bm{\psi}}\|_2^2 = \|\bm{A}^{1/2}-\widehat{\bm{A}}^{1/2}\|_F^2.
\end{equation*}
The same technique can be used to sample from a general elliptical distribution~\cite{quantrisk}.\footnote{For a general elliptical distribution the mean-squared error equals $c\|\bm{A}^{1/2}-\widehat{\bm{A}}^{1/2}\|_F^2$ where $c$ is a constant depending on the elliptical distribution.} 

The matrix square root also appears when estimating the nuclear norm $\|\bm{X}\|_*$ of a matrix~\cite{ubarusaad,ubaru2017applications}. Because of the relation
\begin{equation*}
    \|\bm{X}\|_* = \tr(\bm{A}^{1/2}), \quad \text{where } \bm{A} = \bm{X}^T \bm{X},
\end{equation*}
nuclear norm estimation is equivalent to performing trace estimation on $\bm{A}^{1/2}$, a task that benefits from low-rank approximation. 

\subsubsection*{Matrix logarithm}
The matrix function $\log(\bm{I}+\bm{A})$ frequently appears in statistical learning~\cite{gardner2018gpytorch}. In these applications one typically aims at estimating $\log\det(\bm{I} + \bm{A}) = \tr\left(\log(\bm{I} + \bm{A})\right)$, a task that greatly benefits from low-rank approximation of $\log(\bm{I} + \bm{A})$.

\subsubsection*{Effective dimension}
The effective dimension $d_{\text{eff}}(\mu)$, also called statistical dimension, is defined as
\begin{equation*}
    d_{\text{eff}}(\mu) = \tr(f_{\mu}(\bm{A})), \quad f_{\mu}(x) = \frac{x}{x+\mu},\quad \mu > 0.
\end{equation*}
This quantity appears in kernel learning \cite{alaoui2015fast,avronfasterkernel,avronsharper} and inverse problems \cite{meier2022randomized}. The effective dimension can once again be estimated using trace estimation, and low-rank approximation is again beneficial. Another important quantity is the diagonal of $f_{\mu}(\bm{A})$; its entries are called the Ridge leverage scores.

\subsection{Contributions}

In this work we present funNystr\"om, a new and simple method to obtain a randomized low-rank approximation of $f(\bm{A})$ for an SPSD matrix $\bm A$ and an increasing function $f:[0,\infty) \mapsto [0,\infty)$ satisfying $f(0) = 0$. In a nutshell, our method returns $f(\widehat{\bm{A}})$, where $\widehat{\bm{A}}$ is a Nyström approximation~\cite{gittensmahoney} of $\bm A$.
A major advantage of funNystr\"om is that it only requires mvps with $\bm A$ and not with $f(\bm{A})$. Furthermore, our method can be made single-pass, meaning that we only need to access the entries of $\bm A$ once.
This property does not hold for the Nyström approximation applied to $f(\bm{A})$; the Lanczos method for approximating the involved mvps with $f(\bm A)$ usually requires several iterations and thus repeatedly accesses $\bm{A}$.

For operator monotone $f$ we derive (probabilistic) bounds on
\begin{equation}\label{eq:error}
    \|f(\bm{A})-f(\widehat{\bm{A}})\|
\end{equation}
for different unitarily invariant norms $\|\cdot \|$, including the Frobenius norm $\|\cdot\|_F$, the nuclear norm $\|\cdot\|_*$, and the operator norm $\|\cdot\|_2$. 
The bounds predict that funNystr\"om achieves similar accuracy compared to the (much more expensive) alternative of applying randomized low-rank approximation to $f(\bm{A})$. Let us emphasize that its inexpensiveness makes funNystr\"om also relevant when only quantities like the trace $\tr(f(\bm{A}))$ instead of the full matrix function are needed. Our bounds not only improve and generalize the error bounds in~\cite{saibaba} on trace estimation for $f(x) = \log(1+x)$ but they also show how funNystr\"om can be used to improve the recently developed Hutch++ algorithm~\cite{hutchpp} for trace estimation in the context of matrix functions.

The numerical experiments reported in this work confirm our theoretical findings; compared to existing methods funNystr\"om requires fewer mvps with $\bm{A}$ to obtain a low-rank approximation or trace estimate for $f(\bm{A})$, without sacrificing accuracy or reliability.

\

\section{funNystr\"om: Randomized low-rank approximation of matrix functions}


Let $\bm{\Omega}$ denote an $n \times k$ Gaussian random matrix, that is, its entries are i.i.d. standard normal random variables. 
Given an $n\times n$ SPSD matrix $\bm{A}$, we consider the Nyström approximation 
\begin{equation}\label{eq:nystrom}
    \widehat{\bm{A}}_{q,k} := \bm{A}^q \bm{\Omega}(\bm{\Omega}^T \bm{A}^{2q-1} \bm{\Omega})^{\dagger}(\bm{A}^q\bm{\Omega})^T,
\end{equation} 
where $(\cdot)^\dagger$ denotes the Moore-Penrose pseudoinverse of a matrix and $q$ is a small integer, say, $q = 1$ or $q = 2$.

Since $\bm{\Omega}^T \bm{A}^{2q-1} \bm{\Omega}$ is SPSD, we can compute its square root $\bm{M}$ to obtain
\[
 \widehat{\bm{A}}_{q,k} = \big( \bm{A}^q \bm{\Omega} \bm{M}^{\dagger} \big) \big( \bm{A}^q \bm{\Omega} \bm{M}^{\dagger} \big)^T.
\]
The singular value decomposition (SVD) of the factor, $\bm{A}^q \bm{\Omega} \bm{M}^{\dagger} = \widehat{\bm{U}} \bm{\Sigma} \bm{V}^T$, yields a (truncated) spectral decomposition $\widehat{\bm{A}}_{q,k} = \widehat{\bm{U}} \widehat{\bm{\Lambda}} \widehat{\bm{U}}^T$ with $\widehat{\bm{\Lambda}} = \bm{\Sigma}^2$. Using the assumption $f(0) = 0$, it follows that \[f(\widehat{\bm{A}}_{q,k}) = 
\widehat{\bm{U}} f(\widehat{\bm{\Lambda}}) \widehat{\bm{U}}^T.\]
Algorithm~\ref{alg:funnystrom} implements this idea. Some remarks on our implementation of the algorithm:
\begin{itemize}
    \item To improve numerical stability, the subspace iteration in line~\ref{line:subspace iteration} is combined with a QR decomposition at each iteration; see \cite[Section 5.1]{saadnumerical}. 
    This replaces $\bm{A}^{q-1} \bm{\Omega}$ by an orthonormal basis for its range, which does not alter the Nyström approximation~\cite[Proposition 2.7]{tropp2017fixed}.
    \item To mitigate numerical issues when $\bm{Q}^T \bm{Y}$ in line \ref{line:cholesky} is highly ill-conditioned, in our implementation we truncate eigenvalues in $\bm{D}$ smaller than $5\cdot 10^{-16}\cdot\|\bm{D}\|_2$ to $0$.
\end{itemize}

\begin{algorithm}
\caption{funNyström}
\label{alg:funnystrom}
\textbf{input:} SPSD $\bm{A} \in \mathbb{R}^{n \times n}$. Rank $k$. Number of subspace iterations $q$. Increasing function $f:[0,\infty) \mapsto [0,\infty)$ satisfying $f(0)=0$.\\
\textbf{output:} Spectral decomposition of $f(\widehat{\bm{A}}_{q,k}) = \widehat{\bm{U}} f(\widehat{\bm{\Lambda}})\widehat{\bm{U}}^T$
\begin{algorithmic}[1]
    \State Sample $\bm{\Omega} \in \mathbb{R}^{n \times k}$ with i.i.d. $N(0,1)$ entries.
    \State Compute a thin QR decomposition of $\bm{\Omega} = \bm{Q}\bm{R}$.
    \For{$q_{\text{count}} = 1,\ldots,q-1$}\label{line:forloop}
    \State $\bm{X} = \bm{A} \bm{Q}$\label{line:subspace iteration}
    \State Compute a thin QR decomposition of $\bm{X} = \bm{Q}\bm{R}$.
    \EndFor
    \State $\bm{Y} = \bm{A}\bm{Q}$ \label{line:Y}
    \State Compute the eigenvalue decomposition of $\bm{Q}^T \bm{Y} = \bm{V} \bm{D} \bm{V}^T$. \label{line:cholesky}
    \State $\bm{B} = \bm{Y}\bm{V} (\bm{D}^{1/2})^{\dagger} \bm{V}^T$
    \State Compute the SVD of $\bm{B} = \widehat{\bm{U}} \bm{\Sigma} \bm{V}^T$.
    \State $\widehat{\bm{\Lambda}} = \bm{\Sigma}^2$ \label{line:lambda}
    \State \textbf{return} $\widehat{\bm{U}}, f(\widehat{\bm{\Lambda}})$.
\end{algorithmic}
\end{algorithm}
For $q = 1$, the loop in line~\ref{line:forloop} of Algorithm~\ref{alg:funnystrom} becomes empty and Algorithm~\ref{alg:funnystrom} requires a single pass over $\bm{A}$. Moreover, the mvps with $\bm{A}$, which usually constitute the dominant cost of the method, can be carried out entirely in parallel. 

There is a variant of Algorithm~\ref{alg:funnystrom} that uses the randomized SVD of $\bm{A}$ instead of the Nyström approximation. This variant first computes an orthonormal basis $\bm{Q} \in \mathbb{R}^{n \times k}$ of $\range(\bm{A}^q \bm{\Omega})$ and then constructs a (truncated) spectral decomposition of $\bm{Q}(\bm{Q}^T \bm{A} \bm{Q}) \bm{Q}^T$. This variant requires $(q+1)k$ mvps with $\bm{A}$. For the same number of mvps, Algorithm~\ref{alg:funnystrom} can carry out an additional subspace iteration. The corresponding Nystr\"om approximation $\widehat{\bm{A}}_{q+1,k}$ is often significantly better than the one obtained from the randomized SVD with $q$ subspace iterations~\cite{gittensmahoney,rsvd,nakatsukasa2020fast}. Therefore, for the case of an SPSD matrix $\bm{A}$ using the Nyström approximation is often preferred.

\section{Error bounds for operator monotone functions}

In this section, we will derive bounds for the approximation error \eqref{eq:error} in the Frobenius norm, nuclear norm, and the operator norm. Each of these three norm settings is of importance on its own in applications. For example, the Frobenius norm error dictates the mean square error when approximating samples from elliptical distributions and the variance of the stochastic trace estimator after using a low-rank approximation to reduce the variance of the estimator~\cite{hutchpp}. The nuclear norm is important when approximating $\tr(f(\bm{A}))$ with $\tr(f(\widehat{\bm{A}}_{q,k}))$, since by the operator monotonicity of $f$ the nuclear norm error simplifies to the trace error; see Section~\ref{section:generalresults} and Section~\ref{section:nuclearnormerrorbounds} for details and Section~\ref{section:trest} for applications of our bounds in trace estimation. Finally, the operator norm is often the most natural norm to consider when $\bm{A}$ represents a (discretized) linear operator. 

The following setting will be assumed for the main results presented in this section.

\begin{setting}\label{setting}
For SPSD $\bm{A} \in \mathbb{R}^{n \times n}$ with eigenvalues $\lambda_1 \geq \lambda_2 \geq \ldots \geq \lambda_n \geq 0$, consider the spectral decomposition
\begin{equation} \label{eq:spectraldecom}
    \bm{A} = \bm{U} \bm{\Lambda} \bm{U}^T = \begin{bmatrix} \bm{U}_1 & \bm{U}_2 \end{bmatrix} \begin{bmatrix} \bm{\Lambda}_1 & \\ & \bm{\Lambda}_2 \end{bmatrix} \begin{bmatrix} \bm{U}_1^T \\ \bm{U}_2^T \end{bmatrix},
\end{equation}
where $\bm{\Lambda}_1 = \diag(\lambda_1,\ldots,\lambda_k)$ and 
$\bm{\Lambda}_2 = \diag(\lambda_{k+1},\ldots,\lambda_n)$.
We assume $\lambda_k>0$ and let
\begin{itemize}
    \item $\gamma = \frac{\lambda_{k+1}}{\lambda_k}$ denote the $k^{\text{th}}$ spectral gap;
    \item $p \geq 0$ be an oversampling parameter;
    \item $\bm{\Omega}$ be an $n \times (k+p)$ Gaussian random matrix;
    \item $\bm{\Omega}_i = \bm{U}_i^T \bm{\Omega}$ for $i = 1,2$;
    \item $\widehat{\bm{A}}_{q,k+p}$ be the rank-$(k+p)$ Nyström approximation defined in \eqref{eq:nystrom} for $q\ge 1$;
    \item $f:[0,\infty) \mapsto [0,\infty)$ be 
    operator monotone and $f(0) = 0$ .
\end{itemize}
\end{setting}
\noindent Some remarks on Setting~\ref{setting}:
\begin{itemize}
    \item By the unitary invariance of Gaussian random vectors, $\bm{\Omega}_1$ and $\bm{\Omega}_2$ are independent Gaussian random matrices. This implies that $\rank(\bm{\Omega}_1) = k$ holds almost surely, in which case $\bm{\Omega}_1^{\dagger} = \bm{\Omega}_1^T(\bm{\Omega}_1\bm{\Omega}_1^T)^{-1}$.
    \item The assumption $\lambda_k > 0$ has been made for convenience. When $\lambda_k = 0$ then the approximation results derived in this work become trivial because, with probability one, $\widehat{\bm{A}}_{q,k+p} = \bm{A}$ and the approximation error \eqref{eq:error} is 0 in this case.\footnote{To see this, recall $\widehat{\bm{A}}_{q,k+p} = \bm{A}^{1/2} \bm{P}_{\bm{A}^{q-1/2}\bm{\Omega}} \bm{A}^{1/2}$, where $\bm{P}_{\bm{A}^{q-1/2}\bm{\Omega}}$ is the orthogonal projector onto $\range(\bm{A}^{q-1/2}\bm{\Omega})$. However, as argued in the proof of \cite[Theorem 9.1]{rsvd}, if $\lambda_k = 0$ then $\range(\bm{A}^{q-1/2}\bm{\Omega}) = \range(\bm{A}^{q-1/2}) = \range(\bm{A})$ almost surely. Thus, $\bm{P}_{\bm{A}^{q-1/2}\bm{\Omega}} = \bm{P}_{\bm{A}}$ almost surely. Consequently, $\widehat{\bm{A}}_{q,k+p} = \bm{A}^{1/2} \bm{P}_{\bm{A}} \bm{A}^{1/2} = \bm{A}$ almost surely.} 
    \item 
    The operator monotonicity of $f$ implies that $f \in C^{\infty}(0,\infty)$ and that $f$ is (right) continuous at $0$; see \cite[p.134-135]{bhatia}. The operator monotonicity of $f$ is equivalent to operator concavity of $f$ \cite[Theorem V.2.5]{bhatia}.
\end{itemize}

\subsection{General results}\label{section:generalresults}

We start our analysis by collecting preliminary results that do not depend on the choice of norm. For this purpose, 
let us recall from~\cite[Lemma 1]{gittensmahoney} that we can write
\begin{equation} \label{eq:nystromproj}
  \widehat{\bm{A}}_{q,k+p} = \bm A^{1/2} \bm{P}_{\bm{A}^{q-1/2}\bm{\Omega}} \bm A^{1/2},
\end{equation}
where we let $\bm{P}_{\bm{M}}$ denote the orthogonal projection onto the range of a matrix $\bm M$.
This implies
\[
 \bm A - \widehat{\bm{A}}_{q,k+p} = \bm A^{1/2} ( \bm I - \bm{P}_{\bm{A}^{q-1/2}\bm{\Omega}} ) \bm A^{1/2} \succeq 0.
\]
Combined with operator monotonicity, it follows that $f(\bm A)$ is approximated from below:
\begin{equation} \label{eq:approxfrombelow}
 f(\widehat{\bm{A}}_{q,k+p}) \preceq f(\bm A).
\end{equation}

The following results hold for any unitarily invariant norm $\|\cdot\|$.
\begin{lemma}\label{lemma:loewner_trace}
Consider $n\times n$ SPSD matrices $\bm B,\bm C$ satisfying $\bm{B} \succeq \bm{C}$. Then
\begin{enumerate}
 \item[(i)] $\|\bm B\| \ge \|\bm C\|$;
 \item[(ii)] $\|f(\bm B)\| \ge \|f(\bm C)\|$ for any increasing function $f:[0,\infty) \to [0,\infty)$;
 \item[(iii)]
 $
    \|f(\bm{B})-f(\bm{C})\| \leq \|f(\bm{B}-\bm{C})\|,
$
for any operator monotone function $f : [0,\infty)\to [0,\infty)$.
\end{enumerate}
\end{lemma}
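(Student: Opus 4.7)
Parts (i) and (ii) are standard consequences of Weyl's monotonicity theorem combined with the symmetric-gauge-function representation of unitarily invariant norms. For (i), $\bm{B} \succeq \bm{C} \succeq 0$ yields $\lambda_i(\bm{B}) \ge \lambda_i(\bm{C})$ with eigenvalues arranged in decreasing order; for SPSD matrices these agree with the singular values, and monotonicity of the symmetric gauge function immediately gives $\|\bm{B}\| \ge \|\bm{C}\|$. For (ii), since $f$ is only scalar-monotone one cannot in general conclude $f(\bm{B}) \succeq f(\bm{C})$, so I would instead propagate the eigenvalue ordering through the scalar monotonicity of $f$: $\lambda_i(f(\bm{B})) = f(\lambda_i(\bm{B})) \ge f(\lambda_i(\bm{C})) = \lambda_i(f(\bm{C}))$, and conclude again via the gauge function.

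For (iii) the plan is to establish the L\"owner-order sandwich
\[
  0 \preceq f(\bm{B}) - f(\bm{C}) \preceq f(\bm{B} - \bm{C}),
\]
after which part (i) upgrades the upper bound to the desired norm inequality. The lower bound is immediate from operator monotonicity of $f$. Setting $\bm{H} = \bm{B} - \bm{C} \succeq 0$, the upper bound is equivalent to the operator subadditivity statement $f(\bm{C} + \bm{H}) \preceq f(\bm{C}) + f(\bm{H})$ evaluated at the two positive matrices $\bm{C}$ and $\bm{H}$. This in turn follows from the operator concavity of $f$, which is equivalent to operator monotonicity for $f : [0,\infty) \to [0,\infty)$ by~\cite[Theorem V.2.5]{bhatia}, together with $f(0) \ge 0$.

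The main obstacle is this operator-subadditivity step in (iii); once secured, everything else is eigenvalue bookkeeping. If one prefers not to pass through subadditivity, one may instead invoke Ando's classical comparison inequality $\|f(\bm{A}) - f(\bm{B})\| \le \|f(|\bm{A} - \bm{B}|)\|$ for operator monotone $f$ on $[0,\infty)$ with $f(0) \ge 0$, and specialise to our setting $\bm{B} \succeq \bm{C}$ in which $|\bm{B} - \bm{C}| = \bm{B} - \bm{C}$. Parts (i) and (ii) are elementary but will recur as workhorses in the norm-specific analyses that follow.
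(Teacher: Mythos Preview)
Your treatment of (i) and (ii) is correct and coincides with the paper's: Weyl monotonicity gives $\lambda_i(\bm{B})\ge\lambda_i(\bm{C})\ge 0$, and Fan's dominance theorem (equivalently, monotonicity of the symmetric gauge function) yields the norm inequality; (ii) then follows by pushing the eigenvalue ordering through the scalar monotone $f$.

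For (iii), however, your primary route has a genuine gap. The L\"owner-order subadditivity $f(\bm{C}+\bm{H})\preceq f(\bm{C})+f(\bm{H})$ does \emph{not} follow from operator concavity together with $f(0)\ge 0$. The scalar proof of subadditivity from concavity uses the convex combination with weights $a/(a+b)$ and $b/(a+b)$, which has no meaning when $\bm{C}$ and $\bm{H}$ do not commute. In fact the L\"owner inequality fails already for $f(x)=\sqrt{x}$: take
\[
\bm{C}=\begin{pmatrix}1&0\\0&0\end{pmatrix},\qquad
\bm{H}=\begin{pmatrix}1&1\\1&1\end{pmatrix},
\]
and compute $\sqrt{\bm{C}}+\sqrt{\bm{H}}-\sqrt{\bm{C}+\bm{H}}$; its $(2,2)$ entry equals $1/\sqrt{2}-2/\sqrt{5}<0$, so the difference is not SPSD. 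What \emph{is} true under operator concavity is only a norm (equivalently, weak majorization) subadditivity of the Bourin--Uchiyama type, and that is already in the orbit of Ando's result rather than a L\"owner sandwich.

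Your stated fallback---invoking Ando's inequality $\|f(\bm{A})-f(\bm{B})\|\le\|f(|\bm{A}-\bm{B}|)\|$ for operator monotone $f$ on $[0,\infty)$ and specialising to $\bm{B}\succeq\bm{C}$---is correct and is exactly what the paper does for (iii). So drop the L\"owner-sandwich argument and keep the Ando citation as the proof of (iii).
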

\begin{proof}
(i)
Let $\lambda_i(\bm{B})$ and $\lambda_i(\bm{C})$ denote the $i$th largest eigenvalues of $\bm B$ and $\bm C$, respectively.
By~\cite[Corollary 7.7.4 (c)]{matrixanalysis}, $\lambda_i(\bm{B}) \geq \lambda_i(\bm{C}) \ge 0$ for $i = 1,\ldots, n$. By 
Fan's dominance theorem~\cite[Theorem IV.2.2]{bhatia}, this implies $\|\bm B\| \ge \|\bm C\|$.

(ii) Because of the monotonicity and non-negativity of $f$, $\lambda_i(f(\bm{B})) \geq \lambda_i(f(\bm{C})) \ge 0$ and hence the arguments from (i) apply.

(iii) This is a consequence of a result by Ando~\cite[Theorem 1]{ando}. \end{proof}
Lemma~\ref{lemma:loewner_trace} (ii) combined with~\eqref{eq:approxfrombelow} yields $\|f(\widehat{\bm{A}}_{q,k+p})\| \le \|f(\bm A)\|$. The following result establishes a lower bound on $\|f(\widehat{\bm{A}}_{q,k+p})\|$ in terms of the projected matrix function.
\begin{lemma}\label{lemma:move_out_projection}
Under Setting~\ref{setting} we have
\begin{equation} \label{eq:move_out_projection}
    \|f(\widehat{\bm{A}}_{q,k+p})\| = \|f(\bm{P}_{\bm{A}^{q-1/2}\bm{\Omega}}  \bm{A}\bm{P}_{\bm{A}^{q-1/2}\bm{\Omega}})\| \geq \|\bm{P}_{\bm{A}^{q-1/2}\bm{\Omega}}  f(\bm{A})\bm{P}_{\bm{A}^{q-1/2}\bm{\Omega}}\|.
\end{equation}
\end{lemma}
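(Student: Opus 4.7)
The plan is to split the claim into two parts: the equality $\|f(\widehat{\bm{A}}_{q,k+p})\| = \|f(\bm{P}\bm{A}\bm{P})\|$, and the Löwner inequality $\bm{P} f(\bm{A})\bm{P} \preceq f(\bm{P}\bm{A}\bm{P})$, where I abbreviate $\bm{P} := \bm{P}_{\bm{A}^{q-1/2}\bm{\Omega}}$. Combining the latter with Lemma~\ref{lemma:loewner_trace}(i) then produces the desired norm inequality.

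For the equality, I start from the identity~\eqref{eq:nystromproj}, which gives $\widehat{\bm{A}}_{q,k+p} = \bm{A}^{1/2}\bm{P}\bm{A}^{1/2}$. Setting $\bm{M} := \bm{P}\bm{A}^{1/2}$, and using that $\bm{P}$ is a symmetric idempotent, we have $\widehat{\bm{A}}_{q,k+p} = \bm{M}^T\bm{M}$ while $\bm{P}\bm{A}\bm{P} = \bm{M}\bm{M}^T$. Since both are $n\times n$, the matrices $\bm{M}^T\bm{M}$ and $\bm{M}\bm{M}^T$ have identical spectra (with multiplicities). Using $f(0)=0$, the matrices $f(\widehat{\bm{A}}_{q,k+p})$ and $f(\bm{P}\bm{A}\bm{P})$ again share identical eigenvalues, so they coincide under any unitarily invariant norm.

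For the Löwner inequality, I would invoke the operator Jensen inequality of Hansen--Pedersen: if $g:[0,\infty)\to[0,\infty)$ is operator concave with $g(0)\ge 0$ and $\bm{K}$ satisfies $\bm{K}^T\bm{K}\preceq\bm{I}$, then $\bm{K}^T g(\bm{A})\bm{K} \preceq g(\bm{K}^T\bm{A}\bm{K})$. The orthogonal projection $\bm{P}$ is trivially a contraction, and, as noted in the remarks following Setting~\ref{setting}, operator monotonicity of $f$ is equivalent to operator concavity; together with $f(0)=0$ this lets us apply the inequality with $\bm{K}=\bm{P}$ and $g=f$ to obtain $\bm{P} f(\bm{A})\bm{P} \preceq f(\bm{P}\bm{A}\bm{P})$.

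The main conceptual point is identifying the right operator-theoretic inequality; once the Hansen--Pedersen inequality is in hand, the rest of the argument is essentially bookkeeping. A minor technical subtlety is to observe that $\bm{M}^T\bm{M}$ and $\bm{M}\bm{M}^T$ have identical full spectra (not merely the same nonzero eigenvalues), which is where having both matrices of equal size $n\times n$ and the condition $f(0)=0$ come into play.
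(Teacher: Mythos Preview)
Your proof is correct and follows essentially the same approach as the paper's own proof. The paper likewise factors $\widehat{\bm{A}}_{q,k+p}$ as $\bm{B}\bm{B}^T$ with $\bm{B}=\bm{A}^{1/2}\bm{P}$ to obtain the equality, and then invokes the same operator Jensen-type inequality (citing \cite[Theorem~V.2.3~(iv)]{bhatia} applied to the operator convex function $-f$, which is exactly the Hansen--Pedersen inequality you name) together with Lemma~\ref{lemma:loewner_trace}(i) for the norm inequality.
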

\begin{proof}
By~\eqref{eq:nystromproj}, $\widehat{\bm{A}}_{q,k+p} = \bm{A}^{1/2}\bm{P}_{\bm{A}^{q-1/2}\bm{\Omega}}\bm{A}^{1/2} = \bm B \bm B^T$ for $\bm B = \bm{A}^{1/2}\bm{P}_{\bm{A}^{q-1/2}\bm{\Omega}}$. The equality in~\eqref{eq:move_out_projection} follows from the fact 
that the eigenvalues of $\bm B \bm B^T$ and $\bm B^T \bm B = \bm{P}_{\bm{A}^{q-1/2}\bm{\Omega}}  \bm{A}\bm{P}_{\bm{A}^{q-1/2}\bm{\Omega}}$ are identical.
To establish the inequality in~\eqref{eq:move_out_projection}, we apply~\cite[Theorem V.2.3 (iv)]{bhatia} to $-f$ (which is operator convex~\cite[Theorem V.2.5]{bhatia}) and obtain
\begin{equation*}
    f(\bm{P}_{\bm{A}^{q-1/2}\bm{\Omega}}  \bm{A}\bm{P}_{\bm{A}^{q-1/2}\bm{\Omega}}) \succeq \bm{P}_{\bm{A}^{q-1/2}\bm{\Omega}}  f(\bm{A})\bm{P}_{\bm{A}^{q-1/2}\bm{\Omega}} \succeq \bm{0}.
\end{equation*} 
This completes the proof using Lemma~\ref{lemma:loewner_trace} (i).
\end{proof}

The following bound will be useful for relating the approximation errors of the matrix and the matrix function.
\begin{lemma}\label{lemma:concave}
Under Setting~\ref{setting} the function $f(x)/x$ is decreasing and we have
\begin{equation*}
    \frac{f(\lambda_k)}{\lambda_k} \|\bm{\Lambda}_2\| \leq \|f(\bm{\Lambda}_2)\|.
\end{equation*}
\end{lemma}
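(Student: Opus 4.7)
The plan is to establish the two claims in sequence, then combine them via Fan's dominance (which is already packaged into Lemma~\ref{lemma:loewner_trace}(i)).

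First I would prove that $g(x) := f(x)/x$ is decreasing on $(0,\infty)$. The remark after Setting~\ref{setting} notes that operator monotonicity is equivalent to operator concavity, so in particular $f$ is concave as a scalar function on $[0,\infty)$ (applying the operator concavity inequality to $1\times 1$ matrices). Given any $0 < a \le b$, concavity together with $f(0)=0$ yields
\begin{equation*}
 f(a) \;=\; f\!\left(\tfrac{a}{b}\cdot b + \bigl(1-\tfrac{a}{b}\bigr)\cdot 0\right) \;\ge\; \tfrac{a}{b}\,f(b) + \bigl(1-\tfrac{a}{b}\bigr) f(0) \;=\; \tfrac{a}{b}\,f(b),
\end{equation*}
which rearranges to $f(a)/a \ge f(b)/b$. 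Thus $g$ is decreasing.

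Next I would transfer this pointwise inequality to the diagonal matrix $\bm{\Lambda}_2$. By Setting~\ref{setting}, the eigenvalues on the diagonal of $\bm{\Lambda}_2$ satisfy $0 \le \lambda_i \le \lambda_k$ for $i \ge k+1$, with $\lambda_k > 0$. For each such $i$ with $\lambda_i > 0$, the monotonicity of $g$ gives
\begin{equation*}
 f(\lambda_i) \;\ge\; \tfrac{f(\lambda_k)}{\lambda_k}\,\lambda_i,
\end{equation*}
while if $\lambda_i = 0$ the inequality $f(\lambda_i) = 0 \ge \tfrac{f(\lambda_k)}{\lambda_k}\cdot 0$ is trivial. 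Since $\bm{\Lambda}_2$ and $f(\bm{\Lambda}_2)$ are diagonal, this entrywise bound becomes the Loewner ordering
\begin{equation*}
 f(\bm{\Lambda}_2) \;\succeq\; \tfrac{f(\lambda_k)}{\lambda_k}\,\bm{\Lambda}_2 \;\succeq\; \bm{0}.
\end{equation*}
Applying Lemma~\ref{lemma:loewner_trace}(i) to this pair of SPSD matrices gives the asserted norm inequality.

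I do not anticipate a real obstacle here; the only mildly delicate point is remembering to cover the case $\lambda_i = 0$ separately so that dividing by $\lambda_i$ is never invoked, and to quote operator concavity (rather than proving concavity directly) since the excerpt has already flagged the equivalence with operator monotonicity.
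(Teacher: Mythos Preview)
Your proof is correct and follows essentially the same approach as the paper: establish that $f(x)/x$ is decreasing using concavity and $f(0)=0$, then apply the resulting pointwise inequality $\frac{f(\lambda_k)}{\lambda_k}\lambda_{k+i}\le f(\lambda_{k+i})$ to get the norm bound via Fan's dominance. The only cosmetic difference is that the paper derives the decreasing property by computing the sign of $\frac{\partial}{\partial x}\big[\frac{f(x)}{x}\big]$ (using that $f\in C^\infty(0,\infty)$), whereas you use the secant inequality directly from concavity---both are standard one-line arguments.
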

\begin{proof}
Because $f$ is a differentiable concave function we have $f(y) \leq f(x) + f'(x)(y-x)$. Letting $y = 0$ and using $f(0) = 0$ gives $f'(x)x - f(x) \leq 0$. Hence, $\frac{\partial}{\partial x}\left[\frac{f(x)}{x}\right] = \frac{f'(x)x-f(x)}{x^2} \leq 0$. The second part follows from noticing that
$\frac{f(\lambda_k)}{\lambda_k} \lambda_{k+i} \leq f(\lambda_{k+i})$ for $i = 1,\ldots,n-k$.
\end{proof}

\subsection{Frobenius norm error bounds}

In this section we will establish the following probabilistic bounds for the approximation error~\eqref{eq:error} in the Frobenius norm. 
\begin{theorem}\label{theorem:frobenius_opmon_expectation}
Under Setting~\ref{setting} suppose that $k, p, q \geq 2$. Then
\begin{align*}
    &\mathbb{E}\|f(\bm{A})-f(\widehat{\bm{A}}_{q,k+p})\|_F^2 \leq \left(1 + 5 \gamma^{2(q-3/2)} \frac{k}{p-1}\right) \|f(\bm{\Lambda}_2)\|_F^2.
\end{align*}
\end{theorem}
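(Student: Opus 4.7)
My plan is to start from the operator-monotone lower bound $f(\widehat{\bm{A}}_{q,k+p}) \preceq f(\bm{A})$ provided by~\eqref{eq:approxfrombelow}. A short calculation using $\tr(\bm{Y}(\bm{X}-\bm{Y})) \ge 0$ whenever $\bm{X} \succeq \bm{Y} \succeq 0$ shows $\|\bm{X}-\bm{Y}\|_F^2 \le \|\bm{X}\|_F^2 - \|\bm{Y}\|_F^2$. Combining this with Lemma~\ref{lemma:move_out_projection} yields the deterministic reduction
\[
 \|f(\bm{A})-f(\widehat{\bm{A}}_{q,k+p})\|_F^2 \;\le\; \|f(\bm{A})\|_F^2 - \|f(\widehat{\bm{A}}_{q,k+p})\|_F^2 \;\le\; \|f(\bm{A})\|_F^2 - \|\bm{P}f(\bm{A})\bm{P}\|_F^2,
\]
where $\bm{P}=\bm{P}_{\bm{A}^{q-1/2}\bm{\Omega}}$.

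Next I would invoke the standard HMT-type structural trick: because $\bm{\Omega}_1$ has full row rank almost surely, the matrix $\bm{Z} := \bm{U}\begin{pmatrix}\bm{I}\\\bm{F}\end{pmatrix}$ with $\bm{F}:=\bm{\Lambda}_2^{q-1/2}\bm{\Omega}_2\bm{\Omega}_1^{\dagger}\bm{\Lambda}_1^{-(q-1/2)}$ satisfies $\range(\bm{Z})\subseteq\range(\bm{A}^{q-1/2}\bm{\Omega})$. This inclusion implies $\|\bm{P}f(\bm{A})\bm{P}\|_F \ge \|\bm{P}_{\bm{Z}}f(\bm{A})\bm{P}_{\bm{Z}}\|_F$, so it suffices to control $\|f(\bm{A})\|_F^2 - \|\bm{P}_{\bm{Z}}f(\bm{A})\bm{P}_{\bm{Z}}\|_F^2$. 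Pythagoras in the orthogonal splitting induced by $\bm{P}_{\bm{Z}}$ gives
\[
 \|f(\bm{A})\|_F^2 - \|\bm{P}_{\bm{Z}}f(\bm{A})\bm{P}_{\bm{Z}}\|_F^2 = \|(\bm{I}-\bm{P}_{\bm{Z}})f(\bm{A})(\bm{I}-\bm{P}_{\bm{Z}})\|_F^2 + 2\|(\bm{I}-\bm{P}_{\bm{Z}})f(\bm{A})\bm{P}_{\bm{Z}}\|_F^2,
\]
and I would bound each summand by inserting $\bm{Z}f(\bm{\Lambda}_1)\bm{Z}^T$ (respectively $\bm{Z}f(\bm{\Lambda}_1)\bm{U}_1^T$), using $(\bm{I}-\bm{P}_{\bm{Z}})\bm{Z}=0$. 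In the idealised case $\bm{F}=0$ the first term equals $\|f(\bm{\Lambda}_2)\|_F^2$ with coefficient exactly $1$ and the cross term vanishes; for general $\bm{F}$ the corrections reduce, after expansion in the $\bm{U}$-basis, to multiples of $\|\bm{F}f(\bm{\Lambda}_1)\|_F^2$ and the higher-order $\|\bm{F}f(\bm{\Lambda}_1)\bm{F}^T\|_F^2$.

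The crux is to control the $\bm{F}$-dependent corrections by quantities involving only $f(\bm{\Lambda}_2)$ and the Gaussian matrices. I would write
\[
 \bm{F}f(\bm{\Lambda}_1) = \bigl(\bm{\Lambda}_2^{q-1/2}f(\bm{\Lambda}_2)^{-1}\bigr)\bigl(f(\bm{\Lambda}_2)\bm{\Omega}_2\bm{\Omega}_1^{\dagger}\bigr)\bigl(\bm{\Lambda}_1^{-(q-1/2)}f(\bm{\Lambda}_1)\bigr)
\]
and bound the two diagonal factors in operator norm. Lemma~\ref{lemma:concave} gives $x/f(x)\le \lambda_k/f(\lambda_k)$ for $x\le\lambda_k$, which (together with $q-3/2\ge 1/2$) bounds the first factor by $\gamma^{q-3/2}\lambda_k^{q-1/2}/f(\lambda_k)$. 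For the third factor, I would verify that $x\mapsto f(x)/x^{q-1/2}$ is decreasing whenever $q\ge 3/2$: the concavity of $f$ with $f(0)=0$ gives $f'(x)\le f(x)/x$, whence $\tfrac{d}{dx}\bigl[f(x)/x^{q-1/2}\bigr]\le (3/2-q)f(x)/x^{q+1/2}\le 0$, so the third factor is at most $f(\lambda_k)/\lambda_k^{q-1/2}$. The two diagonal bounds cancel, producing $\|\bm{F}f(\bm{\Lambda}_1)\|_F \le \gamma^{q-3/2}\|f(\bm{\Lambda}_2)\bm{\Omega}_2\bm{\Omega}_1^{\dagger}\|_F$. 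Taking expectation by conditioning on $\bm{\Omega}_1$ and using $\mathbb{E}_{\bm{\Omega}_2}\|\bm{M}\bm{\Omega}_2\bm{S}\|_F^2 = \|\bm{M}\|_F^2\|\bm{S}\|_F^2$ together with the standard inverse-Wishart identity $\mathbb{E}\|\bm{\Omega}_1^{\dagger}\|_F^2 = k/(p-1)$ (valid for $p\ge 2$) then gives $\mathbb{E}\|\bm{F}f(\bm{\Lambda}_1)\|_F^2 \le \gamma^{2(q-3/2)}\tfrac{k}{p-1}\|f(\bm{\Lambda}_2)\|_F^2$; the higher-order term $\|\bm{F}f(\bm{\Lambda}_1)\bm{F}^T\|_F^2$ is controlled analogously and contributes the remaining share of the constant $5$.

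The main technical obstacle is the Pythagoras step: a naive bound $\|f(\bm{A})\|_F^2 - \|\bm{P}f(\bm{A})\bm{P}\|_F^2 \le 2\|(\bm{I}-\bm{P})f(\bm{A})\|_F^2$ already loses the sharp coefficient $1$ in front of $\|f(\bm{\Lambda}_2)\|_F^2$, inflating it to $2$ or $3$. Preserving the $1$ forces one to route through $\bm{P}_{\bm{Z}}$ via the tighter inequality $\|\bm{P}_{\bm{Z}}f(\bm{A})\bm{P}_{\bm{Z}}\|_F \le \|\bm{P}f(\bm{A})\bm{P}\|_F$ (a consequence of $\range(\bm{Z})\subseteq\range(\bm{P})$ together with the submultiplicativity $\|\bm{R}^T\bm{M}\bm{R}\|_F\le\|\bm{M}\|_F$ for any $\bm{R}$ with orthonormal columns), and to treat the $(2,2)$ block and the cross term separately so that the coefficients on $\|\bm{F}f(\bm{\Lambda}_1)\|_F^2$ and $\|\bm{F}f(\bm{\Lambda}_1)\bm{F}^T\|_F^2$ can be accumulated carefully into the single constant $5$.
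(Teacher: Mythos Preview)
Your overall strategy coincides with the paper's: reduce to $\|f(\bm{A})\|_F^2-\|\bm{P}f(\bm{A})\bm{P}\|_F^2$ via the Loewner inequality and Lemma~\ref{lemma:move_out_projection}, pass from $\bm{P}=\bm{P}_{\bm{A}^{q-1/2}\bm{\Omega}}$ to $\bm{P}_{\bm{Z}}$ by range inclusion, split by Pythagoras, bound the pieces in terms of $\|\bm{F}f(\bm{\Lambda}_1)\|_F$, and take expectations. Your factorisation $\bm{F}f(\bm{\Lambda}_1)=\bigl(\bm{\Lambda}_2^{q-1/2}f(\bm{\Lambda}_2)^{-1}\bigr)\bigl(f(\bm{\Lambda}_2)\bm{\Omega}_2\bm{\Omega}_1^{\dagger}\bigr)\bigl(\bm{\Lambda}_1^{-(q-1/2)}f(\bm{\Lambda}_1)\bigr)$ is a clean variant of the paper's estimate and lands on the same bound $\mathbb{E}\|\bm{F}f(\bm{\Lambda}_1)\|_F^2\le\gamma^{2(q-3/2)}\tfrac{k}{p-1}\|f(\bm{\Lambda}_2)\|_F^2$ after one Gaussian expectation.

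The genuine gap is the term $\|\bm{F}f(\bm{\Lambda}_1)\bm{F}^T\|_F^2$. Your insertion of $\bm{Z}f(\bm{\Lambda}_1)\bm{Z}^T$ places $-\bm{F}f(\bm{\Lambda}_1)\bm{F}^T$ in the $(2,2)$ block of $f(\bm{\Lambda})-\bm{Z}f(\bm{\Lambda}_1)\bm{Z}^T$, and after squaring there is no way to kill it. This term is \emph{quartic} in $\bm{\Omega}_2\bm{\Omega}_1^{\dagger}$: its expectation requires the second moment of $(\bm{\Omega}_1\bm{\Omega}_1^T)^{-1}$ (finite only for $p\ge 4$) and scales like $(k/(p-1))^2$ rather than $k/(p-1)$. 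It cannot be ``controlled analogously'' to $\|\bm{F}f(\bm{\Lambda}_1)\|_F^2$, nor absorbed into the constant $5$ of the stated bound, which is purely first order in $k/(p-1)$.

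The paper avoids this altogether by never inserting $\bm{Z}f(\bm{\Lambda}_1)\bm{Z}^T$. Instead it writes $\|(\bm{I}-\bm{P}_{\bm{Z}})f(\bm{\Lambda})\|_F^2=\tr\bigl(f(\bm{\Lambda})(\bm{I}-\bm{P}_{\bm{Z}})f(\bm{\Lambda})\bigr)$ and plugs in the explicit $2\times 2$ block form of $\bm{I}-\bm{P}_{\bm{Z}}$ together with the Loewner inequalities $\bm{I}-(\bm{I}+\bm{F}^T\bm{F})^{-1}\preceq\bm{F}^T\bm{F}$ and $\bm{I}-\bm{F}(\bm{I}+\bm{F}^T\bm{F})^{-1}\bm{F}^T\preceq\bm{I}$. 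This yields $\|(\bm{I}-\bm{P}_{\bm{Z}})f(\bm{\Lambda})\|_F^2\le\|f(\bm{\Lambda}_2)\|_F^2+\|\bm{F}f(\bm{\Lambda}_1)\|_F^2$ with only \emph{quadratic} dependence on $\bm{F}$. For the cross term $\|(\bm{I}-\bm{P}_{\bm{Z}})f(\bm{\Lambda})\bm{P}_{\bm{Z}}\|_F$ the paper splits $f(\bm{\Lambda})$ additively into its $\bm{\Lambda}_1$- and $\bm{\Lambda}_2$-parts and bounds each contribution by $\gamma^{q-3/2}\tfrac{f(\lambda_k)}{\lambda_k}\|\bm{\Lambda}_2\bm{\Omega}_2\bm{\Omega}_1^{\dagger}\|_F$, again quadratic in $\bm{F}$; the $1+4=5$ then falls out cleanly after a single application of~\eqref{eq:expectation}.
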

\begin{theorem}\label{theorem:frobenius_opmon_probability}
    Under Setting~\ref{setting} suppose that $k, p > 4$ and $q \geq 2$. Then for any $u,t \geq 1$ the inequality
    \begin{align*}
        &\|f(\bm{A})-f(\widehat{\bm{A}}_{q,k+p})\|_F \leq \left(1 + \gamma^{q-3/2} \sqrt{\frac{15k}{p+1}} t\right) \|f(\bm{\Lambda}_2)\|_F + \gamma^{q-3/2}\frac{e\sqrt{5(k+p)}}{p+1}ut\|f(\bm{\Lambda}_2)\|_2
    \end{align*}
    holds with probability at least $1-2t^{-p} - e^{-u^2/2}$. 
\end{theorem}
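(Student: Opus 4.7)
My plan is to adapt the structural-then-probabilistic strategy of the HMT analysis of the randomized SVD~\cite{rsvd}, with operator monotonicity supplying the ingredients that let us replace $\bm{A}$ by $f(\bm{A})$. The overall scheme is the same as for the expectation bound in Theorem~\ref{theorem:frobenius_opmon_expectation}: one performs a structural reduction to a Frobenius projection error on $f(\bm{A})$, then controls two Gaussian quantities by high-probability tail bounds rather than by moments.

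I would begin by writing $\bm{P}:=\bm{P}_{\bm{A}^{q-1/2}\bm{\Omega}}$ and reducing the error to a projection error on $f(\bm{A})$. Combining the L\"owner inequality $f(\widehat{\bm{A}}_{q,k+p})\preceq f(\bm{A})$ from~\eqref{eq:approxfrombelow} with the elementary PSD identity $\|\bm{B}-\bm{C}\|_F^2 \le \|\bm{B}\|_F^2 - \|\bm{C}\|_F^2$ (valid for $\bm{B}\succeq\bm{C}\succeq\bm{0}$ because $\tr((\bm{B}-\bm{C})\bm{C})\ge 0$) and with Lemma~\ref{lemma:move_out_projection} in the Frobenius norm gives
\[
\|f(\bm{A}) - f(\widehat{\bm{A}}_{q,k+p})\|_F^2 \;\le\; \|f(\bm{A})\|_F^2 - \|\bm{P}f(\bm{A})\bm{P}\|_F^2 \;=\; \|f(\bm{A}) - \bm{P}f(\bm{A})\bm{P}\|_F^2,
\]
where the final equality is Pythagoras applied to the four-block decomposition of $f(\bm{A})$ induced by $\bm{P}$ and $\bm{I}-\bm{P}$.

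Next, in the eigenbasis~\eqref{eq:spectraldecom}, I would apply the standard HMT enlargement $\bm{I}-\bm{P}\preceq\bm{I}-\bm{P}_{\bm{W}}$ with $\bm{W}:=\begin{bmatrix}\bm{I}\\ \bm{F}\end{bmatrix}$ and $\bm{F}:=\bm{\Lambda}_2^{q-1/2}\bm{\Omega}_2\bm{\Omega}_1^{\dagger}\bm{\Lambda}_1^{-(q-1/2)}$, whose range sits inside $\range(\bm{A}^{q-1/2}\bm{\Omega})$. Explicit block formulas for $\bm{I}-\bm{P}_{\bm{W}}$ produce a structural bound of the form $\|f(\bm{\Lambda}_2)\|_F^2 + (\text{cross-terms in }\bm{F},\,f(\bm{\Lambda}_1),\,f(\bm{\Lambda}_2))$. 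The cross-terms are then converted to $f(\bm{\Lambda}_2)$-quantities using Lemma~\ref{lemma:concave}: a direct computation gives $\partial_\lambda(f(\lambda)/\lambda^{q-1/2}) \le (3/2-q)f(\lambda)/\lambda^{q+1/2} \le 0$ for $q\ge 2$, hence $\max_{i\le k} f(\lambda_i)/\lambda_i^{q-1/2} = f(\lambda_k)/\lambda_k^{q-1/2}$; combined with $\|\bm{\Lambda}_2^{q-1/2}\|_2 = \lambda_{k+1}^{q-1/2}$ and the single identity $\gamma f(\lambda_k)\le f(\lambda_{k+1}) = \|f(\bm{\Lambda}_2)\|_2$ (again from $f(\lambda)/\lambda$ decreasing), this produces the advertised $\gamma^{q-3/2}$ prefactor.

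The remaining step is probabilistic: after splitting each cross term as $\|\bm{X}\bm{Y}\|_F\le\|\bm{X}\|_F\|\bm{Y}\|_2$ or $\|\bm{X}\|_2\|\bm{Y}\|_F$, all randomness is packaged in $\|\bm{\Omega}_2\bm{\Omega}_1^{\dagger}\|_F$ and $\|\bm{\Omega}_2\bm{\Omega}_1^{\dagger}\|_2$. Substituting the standard Gaussian tail bounds for these two quantities from~\cite{rsvd} and applying a union bound over the two failure events delivers the stated probability $1 - 2t^{-p} - e^{-u^2/2}$. The main obstacle I anticipate is not any single step but the bookkeeping needed to keep the leading coefficient of $\|f(\bm{\Lambda}_2)\|_F$ exactly equal to $1$ (rather than an inflated $\sqrt{2}$ coming naively from the two Pythagorean cross terms) and to match the constants $\sqrt{15}$ and $\sqrt{5}$ in the statement. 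I expect this will require treating the two cross-term contributions jointly, rather than combining them through a blind triangle inequality.
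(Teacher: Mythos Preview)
Your overall architecture matches the paper's: reduce to a projection error on $f(\bm{A})$ via Lemmas~\ref{lemma:upper_bound} and~\ref{lemma:move_out_projection}, enlarge the projector to $\bm{P}_{\bm{W}}$ with $\bm{W}=\begin{bmatrix}\bm{I}\\\bm{F}\end{bmatrix}$, obtain a structural inequality, then go probabilistic. The gap is in how you package the randomness for the last step.

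You propose to strip off all deterministic factors---in particular you pull out $\|\bm{\Lambda}_2^{q-1/2}\|_2$---and control the bare quantities $\|\bm{\Omega}_2\bm{\Omega}_1^{\dagger}\|_F$ and $\|\bm{\Omega}_2\bm{\Omega}_1^{\dagger}\|_2$. But the HMT tail bounds for these scale with $\sqrt{n-k}$ (they are Lemma~\ref{lemma:probabilistic} with $\bm{D}=\bm{I}_{n-k}$, so $\|\bm{D}\|_F=\sqrt{n-k}$). You would therefore obtain a bound depending on the ambient dimension $n$, whereas the statement is dimension-free: it involves only $\|f(\bm{\Lambda}_2)\|_F$ and $\|f(\bm{\Lambda}_2)\|_2$. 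This route cannot reach the theorem as stated.

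The fix, and what the paper does in Lemma~\ref{lemma:function_opmon_structural}, is to keep one power of $\bm{\Lambda}_2$ attached to the Gaussian part. Split $\bm{\Lambda}_2^{q-1/2}=\bm{\Lambda}_2^{q-3/2}\cdot\bm{\Lambda}_2$, pull $\|\bm{\Lambda}_2^{q-3/2}\|_2$ and $\|\bm{\Lambda}_1^{-(q-1/2)}f(\bm{\Lambda}_1)\|_2\le\|\bm{\Lambda}_1^{-1}f(\bm{\Lambda}_1)\|_2=f(\lambda_k)/\lambda_k$ out in operator norm (this is exactly where $q\ge 2$ enters), and leave the random quantity as $\|\bm{\Lambda}_2\bm{\Omega}_2\bm{\Omega}_1^{\dagger}\|_F$. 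The structural bound then reads
\[
\|f(\bm{A})-f(\widehat{\bm{A}}_{q,k+p})\|_F^2 \;\le\; \|f(\bm{\Lambda}_2)\|_F^2 \;+\; 5\,\gamma^{2(q-3/2)}\,\frac{f(\lambda_k)^2}{\lambda_k^2}\,\|\bm{\Lambda}_2\bm{\Omega}_2\bm{\Omega}_1^{\dagger}\|_F^2,
\]
and one applies Lemma~\ref{lemma:probabilistic} with $\bm{D}=\bm{\Lambda}_2$, producing $\|\bm{\Lambda}_2\|_F$ and $\|\bm{\Lambda}_2\|_2$; Lemma~\ref{lemma:concave} converts these to $\|f(\bm{\Lambda}_2)\|_F$ and $\|f(\bm{\Lambda}_2)\|_2$. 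This also dissolves your bookkeeping worry: subadditivity of $\sqrt{\cdot}$ on the display above yields the leading coefficient $1$ on $\|f(\bm{\Lambda}_2)\|_F$ for free, and the constants $\sqrt{15}=\sqrt{5\cdot 3}$ and $\sqrt{5}$ drop out of $\sqrt{5}\cdot(\text{Lemma~\ref{lemma:probabilistic}})$ without any delicate recombination of cross terms.
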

For $q \geq 2$ our error bounds for Algorithm~\ref{alg:funnystrom} are comparable to
existing bounds~\cite[Theorem 10.5 and 10.7]{rsvd}
for the direct (and much more expensive) application of the randomized SVD to $f(\bm{A})$.\footnote{For $q = 2$, by replacing $\gamma^{q-3/2} = \gamma^{1/2}$ in Theorem~\ref{theorem:frobenius_opmon_expectation} and \ref{theorem:frobenius_opmon_probability} with $\gamma^0 = 1$ we achieve the same bounds as in \cite[Theorem 10.5 and 10.7]{rsvd} up to constants.} In the context of trace estimation, Theorem~\ref{theorem:frobenius_opmon_probability} allows us to replace the randomized SVD by Algorithm~\ref{alg:funnystrom} in Hutch++~\cite{hutchpp}; see Section~\ref{section:trest} for details. Our bounds are only valid for $q \geq 2$; in Appendix~\ref{app:schatten} we present (structural) bounds valid for any $q \geq 1$ and for general Schatten norms but under a stronger assumption on $f$. 

\subsubsection{Structural bound}

Before applying probabilistic arguments, we derive a structural bound that holds for \emph{any} sketching matrix $\bm{\Omega}$ such that $\bm{\Omega}_1 = \bm{U}_1^T \bm{\Omega}$ has full rank.
We begin with a simple error bound.
\begin{lemma}\label{lemma:upper_bound}
Under Setting~\ref{setting} we have the inequality
    \begin{equation*}
        \|f(\bm{A})-f(\widehat{\bm{A}}_{q,k+p})\|_F^2 \leq \|f(\bm{A})\|_F^2 - \|f(\widehat{\bm{A}}_{q,k+p})\|_F^2.
    \end{equation*}
\end{lemma}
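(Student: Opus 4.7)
My plan is to expand the squared Frobenius norm as an inner product and then use the fact that $f$ approximates $\bm A$ from below, as already recorded in~\eqref{eq:approxfrombelow}.

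First, I would write
\begin{equation*}
\|f(\bm{A})-f(\widehat{\bm{A}}_{q,k+p})\|_F^2
= \|f(\bm A)\|_F^2 - 2\tr\bigl(f(\bm A)\,f(\widehat{\bm A}_{q,k+p})\bigr) + \|f(\widehat{\bm A}_{q,k+p})\|_F^2,
\end{equation*}
using the definition of the Frobenius inner product together with the symmetry of the matrices involved. Comparing this expansion with the claimed upper bound, the inequality is equivalent to
\begin{equation*}
\tr\bigl(f(\bm A)\,f(\widehat{\bm A}_{q,k+p})\bigr) \;\ge\; \|f(\widehat{\bm A}_{q,k+p})\|_F^2 = \tr\bigl(f(\widehat{\bm A}_{q,k+p})^2\bigr),
\end{equation*}
or, after rearranging,
\begin{equation*}
\tr\bigl((f(\bm A) - f(\widehat{\bm A}_{q,k+p}))\,f(\widehat{\bm A}_{q,k+p})\bigr) \;\ge\; 0.
\end{equation*}

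Next I would invoke the operator monotonicity of $f$: combining \eqref{eq:approxfrombelow} with the fact that $f$ maps $[0,\infty)$ into $[0,\infty)$ so that $f(\widehat{\bm A}_{q,k+p}) \succeq 0$, both factors above are SPSD matrices. The standard fact that the trace of the product of two SPSD matrices is non-negative (e.g., because $\tr(\bm X \bm Y) = \tr(\bm X^{1/2}\bm Y \bm X^{1/2}) \ge 0$ whenever $\bm X,\bm Y \succeq 0$) then finishes the argument.

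There is no real obstacle here; the only subtle point is remembering that \eqref{eq:approxfrombelow} already packages the operator monotonicity and the inequality $\widehat{\bm A}_{q,k+p} \preceq \bm A$ into exactly the PSD relation $f(\widehat{\bm A}_{q,k+p}) \preceq f(\bm A)$ that we need. Everything else is a direct inner-product manipulation.
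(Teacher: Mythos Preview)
Your proposal is correct and is essentially the same argument as the paper's: both expand the Frobenius norm, reduce the claim to $\tr\bigl((f(\bm A)-f(\widehat{\bm A}_{q,k+p}))\,f(\widehat{\bm A}_{q,k+p})\bigr)\ge 0$, and conclude via the conjugation identity $\tr(\bm X\bm Y)=\tr(\bm X^{1/2}\bm Y\bm X^{1/2})\ge 0$ for SPSD $\bm X,\bm Y$, after invoking~\eqref{eq:approxfrombelow}. The paper merely spells out the conjugation with $\bm C^{1/2}$ explicitly rather than packaging it as ``trace of a product of two SPSD matrices is non-negative.''
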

\begin{proof}
Using~\eqref{eq:approxfrombelow}, $\bm{B} \succeq \bm{C}$ for $\bm{B} = f(\bm{A})$ and $\bm{C} = f(\widehat{\bm{A}}_{q,k+p})$, it follows from~\cite[Theorem 7.7.2 (a)]{matrixanalysis} that
$
    \bm{C}^2 = \bm{C}^{1/2}\bm{C} \bm{C}^{1/2} \preceq \bm{C}^{1/2}\bm{B} \bm{C}^{1/2}.
$
In turn, 
\begin{equation*}
    \|\bm{C}\|_F^2 = \tr(\bm{C}^2) \leq \tr(\bm{C}^{1/2}\bm{B} \bm{C}^{1/2}) = \tr(\bm{B}\bm{C}),
\end{equation*}
using~\cite[Corollary 7.7.4 (d)]{matrixanalysis} for the inequality. Hence, 
\begin{equation*}
    \|\bm{B}-\bm{C}\|_F^2 = \|\bm{B}\|_F^2 + \|\bm{C}\|_F^2 - 2\tr(\bm{B}\bm{C}) \leq \|\bm{B}\|_F^2 - \|\bm{C}\|_F^2,
\end{equation*}
which concludes the proof.
\end{proof}

Lemma~\ref{lemma:upper_bound} combined with Lemma~\ref{lemma:move_out_projection} allow us to bound the Frobenius norm approximation error by a projection error:
\begin{align}
    \|f(\bm{A})-f(\widehat{\bm{A}}_{q,k+p})\|_F^2  & \leq \|f(\bm{A})\|_F^2 - \|f(\widehat{\bm{A}}_{q,k+p})\|_F^2 \nonumber \\
    &\leq \|f(\bm{A})\|_F^2 - \|\bm{P}_{\bm{A}^{q-1/2}\bm{\Omega}}f(\bm{A})\bm{P}_{\bm{A}^{q-1/2}\bm{\Omega}}\|_F^2 \nonumber \\
    &=  \|f(\bm{A})-\bm{P}_{\bm{A}^{q-1/2}\bm{\Omega}}f(\bm{A})\bm{P}_{\bm{A}^{q-1/2}\bm{\Omega}}\|_F^2, \label{eq:projectionerrorbound}
\end{align}
On the other hand, because $\bm{P}_{\bm{A}^{q}\bm{\Omega}}  f(\bm{A})\bm{P}_{\bm{A}^{q}\bm{\Omega}}$ is the best projection of the (co-)range of $f(\bm A)$ on the range of ${\bm{A}^{q}\bm{\Omega}}$ and 
$\range(f(\widehat{\bm{A}}_{q,k+p})) \subseteq \range(\bm{A}^{q}\bm{\Omega})$ we also have
\begin{equation}
    \|f(\bm{A})-\bm{P}_{\bm{A}^{q}\bm{\Omega}}  f(\bm{A})\bm{P}_{\bm{A}^{q}\bm{\Omega}}\|_F \leq \|f(\bm{A})-f(\widehat{\bm{A}}_{q,k+p})\|_F. \label{eq:projectionerrorbound2}
\end{equation}
Thus, the approximation error is sandwiched between the errors of orthogonal 
projections of $f(\bm A)$ onto ${\bm{A}^{q}\bm{\Omega}}$ and 
${\bm{A}^{q-1/2}\bm{\Omega}}$. Note that the definition $\widehat{\bm{A}}_{q,k+p}$ is valid for any $q \geq 1$, even rational $q$. Therefore, by \eqref{eq:projectionerrorbound} and \eqref{eq:projectionerrorbound2} and  $\range(f(\bm{P}_{\bm{A}^{q}\bm{\Omega}}\bm{A}\bm{P}_{\bm{A}^{q}\bm{\Omega}})) \subseteq \range(\bm{A}^{q}\bm{\Omega})$ (a consequence of $f(0)=0$)
\begin{align*}
    &\|f(\bm{A})-\bm{P}_{\bm{A}^{q+1}\bm{\Omega}}  f(\bm{A})\bm{P}_{\bm{A}^{q+1}\bm{\Omega}}\|_F \leq \|f(\bm{A})-\bm{P}_{\bm{A}^{q+1/2}\bm{\Omega}}  f(\bm{A})\bm{P}_{\bm{A}^{q+1/2}\bm{\Omega}}\|_F \\
    \leq & \|f(\bm{A})-\bm{P}_{\bm{A}^{q}\bm{\Omega}}  f(\bm{A})\bm{P}_{\bm{A}^{q}\bm{\Omega}}\|_F \leq \|f(\bm{A})-  f(\bm{P}_{\bm{A}^{q}\bm{\Omega}}\bm{A}\bm{P}_{\bm{A}^{q}\bm{\Omega}})\|_F,
\end{align*}
which in turn implies
\begin{align}
\begin{split}
    &\|f(\bm{A})-f(\widehat{\bm{A}}_{q+1,k+p})\|_F \leq  \|f(\bm{A})-\bm{P}_{\bm{A}^{q}\bm{\Omega}}  f(\bm{A})\bm{P}_{\bm{A}^{q}\bm{\Omega}}\|_F\\
    \leq &\|f(\bm{A})-  f(\bm{P}_{\bm{A}^{q}\bm{\Omega}}\bm{A}\bm{P}_{\bm{A}^{q}\bm{\Omega}})\|_F = \|f(\bm{A})-  f(\bm{Q}\bm{Q}^T\bm{A}\bm{Q}\bm{Q}^T)\|_F,
\end{split}\label{eq:upper_bound}
\end{align} 
where $\bm{Q}$ is an orthonormal basis of $\range(\bm{A}^q \bm{\Omega})$. This means that the error from Algorithm~\ref{alg:funnystrom} will never exceed the error produced by the randomized SVD, using the same number of mvps with $\bm{A}$. Similar comments for $f(x) = x$ can be found in \cite{gittensmahoney,rsvd,randmethodsmatrixcomp,nakatsukasa2020fast}.

We now proceed with the main structural bound.

\begin{lemma}\label{lemma:function_opmon_structural}
Under Setting~\ref{setting} and assuming $\rank(\bm{\Omega}_1) = k$ we have
\begin{equation*}
    \|f(\bm{A})-f(\widehat{\bm{A}}_{q,k+p})\|_F^2 \leq \|f(\bm{\Lambda}_2)\|_F^2 + 5 \gamma^{2(q-3/2)} \frac{f(\lambda_k)^2}{\lambda_k^2} \|\bm{\Lambda}_2\bm{\Omega}_2 \bm{\Omega}_1^{\dagger}\|_F^2.
\end{equation*}
\end{lemma}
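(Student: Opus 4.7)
The plan is to chain the earlier lemmas to reduce the claim to a deterministic projection-error bound, then evaluate that error explicitly via an HMT-style change of basis. First, combining Lemma~\ref{lemma:upper_bound} with Lemma~\ref{lemma:move_out_projection} and the identity $\|\bm M\|_F^2 - \|\bm P \bm M \bm P\|_F^2 = \|\bm M - \bm P \bm M \bm P\|_F^2$ (valid for symmetric $\bm M$ and any orthogonal projector $\bm P$) yields
\[
 \|f(\bm A) - f(\widehat{\bm A}_{q,k+p})\|_F^2 \leq \|f(\bm A) - \bm P_{\bm A^{q-1/2}\bm\Omega} f(\bm A)\bm P_{\bm A^{q-1/2}\bm\Omega}\|_F^2 .
\]
I would then introduce the surrogate $\widetilde{\bm W} := \bm U \begin{bmatrix}\bm I \\ \bm G\end{bmatrix}$ with $\bm G := \bm\Lambda_2^{q-1/2}\bm\Omega_2 \bm\Omega_1^{\dagger} \bm\Lambda_1^{-(q-1/2)}$. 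Since $\rank(\bm\Omega_1) = k$, a short calculation gives $\widetilde{\bm W} = \bm A^{q-1/2}\bm\Omega \cdot \bm\Omega_1^{\dagger}\bm\Lambda_1^{-(q-1/2)}$, so $\range(\widetilde{\bm W}) \subseteq \range(\bm A^{q-1/2}\bm\Omega)$. Because $\|\bm P f(\bm A)\bm P\|_F$ is monotone in $\bm P$ (apply Lemma~\ref{lemma:loewner_trace}(i) to $f(\bm A)^{1/2}\bm P f(\bm A)^{1/2}$), the right-hand side above is further bounded by $\|f(\bm\Lambda) - \bm P_{\widetilde{\bm W}} f(\bm\Lambda) \bm P_{\widetilde{\bm W}}\|_F^2$.

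Second, I would split this projection error by the Pythagorean identity
\[
 \|f(\bm\Lambda) - \bm P_{\widetilde{\bm W}} f(\bm\Lambda) \bm P_{\widetilde{\bm W}}\|_F^2 = \|(\bm I - \bm P_{\widetilde{\bm W}}) f(\bm\Lambda)\|_F^2 + \|(\bm I - \bm P_{\widetilde{\bm W}})f(\bm\Lambda) \bm P_{\widetilde{\bm W}}\|_F^2
\]
and handle each piece. For the first, the rank-$k$ ansatz $\widetilde{\bm W}[f(\bm\Lambda_1), \bm 0]$ combined with a direct block calculation gives $\|(\bm I - \bm P_{\widetilde{\bm W}}) f(\bm\Lambda)\|_F^2 \leq \|f(\bm\Lambda_2)\|_F^2 + \|\bm G f(\bm\Lambda_1)\|_F^2$. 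For the second, expanding $(\bm I - \bm P_{\widetilde{\bm W}}) f(\bm\Lambda)\bm P_{\widetilde{\bm W}}$ in block form with $\bm S := (\bm I + \bm G^T \bm G)^{-1}$ one arrives at a compact factorized expression whose Frobenius norm can be estimated using the key identity
\[
 \bm G \bm S^2 \bm G^T + (\bm I + \bm G \bm G^T)^{-2} = (\bm I + \bm G \bm G^T)^{-1},
\]
which collapses the middle factor to $(\bm I+\bm G\bm G^T)^{-1}$ (whose spectral norm is $\leq 1$). This yields $\|(\bm I - \bm P_{\widetilde{\bm W}}) f(\bm\Lambda)\bm P_{\widetilde{\bm W}}\|_F^2 \leq \|f(\bm\Lambda_2)\bm G - \bm G f(\bm\Lambda_1)\|_F^2 \leq 2\|f(\bm\Lambda_2)\bm G\|_F^2 + 2\|\bm G f(\bm\Lambda_1)\|_F^2$.

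Finally, I would convert the two scalar quantities $\|\bm G f(\bm\Lambda_1)\|_F^2$ and $\|f(\bm\Lambda_2)\bm G\|_F^2$ into the form in the lemma by entrywise analysis and Lemma~\ref{lemma:concave}. For $\|\bm G f(\bm\Lambda_1)\|_F^2$, the inequality $f(\lambda_j)/\lambda_j \leq f(\lambda_k)/\lambda_k$ for $j\leq k$ (from Lemma~\ref{lemma:concave}), together with $\lambda_j^{-(2q-3)} \leq \lambda_k^{-(2q-3)}$ and $\lambda_{k+i}^{2q-3} \leq \lambda_{k+1}^{2q-3}$ (both valid for $q \geq 3/2$), yields $\|\bm G f(\bm\Lambda_1)\|_F^2 \leq \gamma^{2(q-3/2)}(f(\lambda_k)/\lambda_k)^2\|\bm\Lambda_2\bm\Omega_2\bm\Omega_1^{\dagger}\|_F^2$; for $\|f(\bm\Lambda_2)\bm G\|_F^2$, replace the concavity step by $f(\lambda_{k+i})\leq f(\lambda_k)$ and repeat the bookkeeping to obtain the same bound. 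Summing the coefficients $1 + 2 + 2 = 5$ gives the claim. The main obstacle is the second piece of the Pythagorean splitting: without the matrix identity above, the most obvious estimates introduce an uncontrolled factor of $\|\bm G\|_2$, and the clean constant $5$ (and in particular the desired unit coefficient on $\|f(\bm\Lambda_2)\|_F^2$) would not be achievable.
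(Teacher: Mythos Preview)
Your proof is correct and follows essentially the same strategy as the paper: reduce to the two-sided projection error via Lemmas~\ref{lemma:upper_bound} and~\ref{lemma:move_out_projection}, pass to the HMT surrogate $\begin{bmatrix}\bm I\\\bm G\end{bmatrix}$ (the paper writes $\bm F$ for your $\bm G$), split by Pythagoras, and control $\|\bm G f(\bm\Lambda_1)\|_F$ and $\|f(\bm\Lambda_2)\bm G\|_F$ via Lemma~\ref{lemma:concave} to reach the constant $5$. The only substantive difference is in the cross term $\|(\bm I-\bm P)f(\bm\Lambda)\bm P\|_F$: the paper splits $f(\bm\Lambda)$ into its top and bottom diagonal blocks and applies the triangle inequality (reusing the first-term bound for the top block and a direct trace estimate for the bottom), whereas you factor the cross term explicitly to obtain the sharp intermediate bound $\|f(\bm\Lambda_2)\bm G-\bm G f(\bm\Lambda_1)\|_F^2$ and then apply $(a-b)^2\le 2a^2+2b^2$. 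Both routes yield coefficient $4$ on this term and hence $5$ overall. One minor slip: since you define $\widetilde{\bm W}=\bm U\begin{bmatrix}\bm I\\\bm G\end{bmatrix}$ in the original basis, the projector $\bm P_{\widetilde{\bm W}}$ should act on $f(\bm A)$, not on $f(\bm\Lambda)$; either drop the leading $\bm U$ from the definition or rotate to the eigenbasis first.
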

\begin{proof}
Letting $\bm{Y} = \bm{A}^{q-1/2}\bm{\Omega}$, the inequality~\eqref{eq:projectionerrorbound} states that $\|f(\bm{A})-f(\widehat{\bm{A}}_{q,k+p})\|_F \le 
\|f(\bm{A})-\bm{P}_{\bm{Y}}f(\bm{A})\bm{P}_{\bm{Y}}\|_F$.
Setting $\widetilde{\bm{Y}}:=\bm{U}^T \bm{Y}$ with the orthogonal factor $\bm U$ from the spectral decomposition~\eqref{eq:spectraldecom}, we have
$\bm{U}^T \bm{P}_{\bm{Y}} \bm{U} = \bm{P}_{\widetilde{\bm{Y}}}$ \cite[Proposition 8.4]{rsvd} and thus
\begin{equation*}
    \|f(\bm{A})-\bm{P}_{\bm{Y}}f(\bm{A})\bm{P}_{\bm{Y}}\|_F^2 = \|f(\bm{\Lambda}) - \bm{P}_{\widetilde{\bm{Y}}}f(\bm{\Lambda})\bm{P}_{\widetilde{\bm{Y}}}\|_F^2.
\end{equation*}

Setting
\begin{equation*}
    \bm{Z} = \widetilde{\bm{Y}} \bm{\Omega}_1^{\dagger} \bm{\Lambda}_1^{-(q-1/2)} = \begin{bmatrix} \bm{I} \\ \bm{F} \end{bmatrix}, \quad \bm{F} = \bm{\Lambda}_2^{q-1/2} \bm{\Omega}_2\bm{\Omega}_1^{\dagger} \bm{\Lambda}_1^{-(q-1/2)}.
\end{equation*}
we note that $\range(\bm{Z}) \subseteq \range(\bm{\widetilde{Y}})$ and, in turn, $\bm{P}_{\bm{Z}}  = \bm{P}_{\bm{Z}} \bm{P}_{\widetilde{\bm{Y}}}$. Consequently, $\|\bm{P}_{\bm{Z}}f(\bm{\Lambda})\bm{P}_{\bm{Z}}\|_F = \|\bm{P}_{\bm{Z}} \bm{P}_{\widetilde{\bm{Y}}}f(\bm{\Lambda})\bm{P}_{\widetilde{\bm{Y}}} \bm{P}_{\bm{Z}}\|_F \le \|\bm{P}_{\widetilde{\bm{Y}}}f(\bm{\Lambda})\bm{P}_{\widetilde{\bm{Y}}}\|_F$ and therefore
\begin{align}
    \|f(\bm{A})-f(\widehat{\bm{A}}_{q,k+p})\|^2_F & \le \|f(\bm{\Lambda}) - \bm{P}_{\widetilde{\bm{Y}}}f(\bm{\Lambda})\bm{P}_{\widetilde{\bm{Y}}}\|_F^2 
     = \|f(\bm{\Lambda})\|_F^2 - \|\bm{P}_{\widetilde{\bm{Y}}}f(\bm{\Lambda})\bm{P}_{\widetilde{\bm{Y}}}\|_F^2 \nonumber \\
    & \leq  \|f(\bm{\Lambda})\|_F^2 -  \|\bm{P}_{\bm{Z}}f(\bm{\Lambda})\bm{P}_{\bm{Z}}\|_F^2 = \|f(\bm{\Lambda}) -  \bm{P}_{\bm{Z}}f(\bm{\Lambda})\bm{P}_{\bm{Z}}\|_F^2 \nonumber \\
    &= \|(\bm{I}-\bm{P}_{\bm{Z}})f(\bm{\Lambda})\|_F^2 + \|(\bm{I}-\bm{P}_{\bm{Z}})f(\bm{\Lambda})\bm{P}_{\bm{Z}}\|_F^2. \label{eq:decomperror}
\end{align}

For treating the first term in the sum~\eqref{eq:decomperror}, we recall from~\cite[Proposition 8.2]{rsvd} that 
\begin{align}
    &\bm{I}-\bm{P}_{\bm{Z}} = \begin{bmatrix} \bm{I}-(\bm{I} + \bm{F}^T \bm{F})^{-1} & - (\bm{I}+\bm{F}^T \bm{F})^{-1} \bm{F}^T \\
    -\bm{F}(\bm{I}+\bm{F}^T \bm{F})^{-1} & \bm{I} - \bm{F}(\bm{I}+\bm{F}^T \bm{F})^{-1} \bm{F}^T \end{bmatrix},\nonumber\\
    &\bm{I}-(\bm{I} + \bm{F}^T \bm{F})^{-1} \preceq \bm{F}^T \bm{F}, \label{eq:projector_ineq}\\
    &\bm{I} - \bm{F}(\bm{I}+\bm{F}^T \bm{F})^{-1} \bm{F}^T \preceq \bm{I}\nonumber.
\end{align}
Hence,
\begin{equation}\label{eq:projection}
     \|(\bm{I}-\bm{P}_{\bm{Z}})f(\bm{\Lambda})\|_F^2 = \tr(f(\bm{\Lambda})(\bm{I}-\bm{P}_{\bm{Z}})f(\bm{\Lambda})) \leq \|\bm{F}f(\bm{\Lambda}_1)\|_F^2 + \|f(\bm{\Lambda}_2)\|_F^2.
\end{equation}
Utilizing $q \geq 2$ we obtain
\begin{align*}
    &\|\bm{F}f(\bm{\Lambda}_1)\|_F \le \|\bm{\Lambda}_2^{q-3/2}\|_2 \|\bm{\Lambda}_1^{-(q-1/2)} f(\bm{\Lambda}_1)\|_2 \|\bm{\Lambda}_2\bm{\Omega}_2\bm{\Omega}_1^{\dagger}\|_F\\
    \leq&\gamma^{q-3/2}\|\bm{\Lambda}_1^{-1}f(\bm{\Lambda}_1)\|_2 \|\bm{\Lambda}_2\bm{\Omega}_2\bm{\Omega}_1^{\dagger}\|_F
    \le \gamma^{q-3/2} \frac{f(\lambda_k)}{\lambda_k} \|\bm{\Lambda}_2\bm{\Omega}_2\bm{\Omega}_1^{\dagger}\|_F,
\end{align*}
where the second inequality relies on $q \geq 2$ and the third inequality uses that $f(x)/x$ is decreasing (see Lemma~\ref{lemma:concave}). Plugging this inequality into~\eqref{eq:projection} yields
\begin{equation}\label{eq:first_term}
    \|(\bm{I}-\bm{P}_{\bm{Z}})f(\bm{\Lambda})\|_F^2 \leq \|f(\bm{\Lambda}_2)\|_F^2 + \gamma^{2(q-3/2)}\frac{f(\lambda_k)^2}{\lambda_k^2}\|\bm{\Lambda}_2 \bm{\Omega}_2 \bm{\Omega}_1^{\dagger}\|_F^2.
\end{equation}

For treating the second term in the sum~\eqref{eq:decomperror}, we decompose $f(\bm{\Lambda})$ into
\begin{equation*}
    f(\bm{\widetilde \Lambda}_1) = \begin{bmatrix} f(\bm{\Lambda}_1) & \\ & \bm{0} \end{bmatrix}, \quad f(\bm{\widetilde \Lambda}_2) = \begin{bmatrix} \bm{0} & \\ & f(\bm{\Lambda}_2) \end{bmatrix},
\end{equation*}
which gives
\begin{align*}
    \|(\bm{I}-\bm{P}_{\bm{Z}})f(\bm{\Lambda})\bm{P}_{\bm{Z}}\|_F & \leq
    \|(\bm{I}-\bm{P}_{\bm{Z}})f(\bm{\widetilde \Lambda}_1)\bm{P}_{\bm{Z}}\|_F+
    \|(\bm{I}-\bm{P}_{\bm{Z}})f(\bm{\widetilde \Lambda}_2)\bm{P}_{\bm{Z}}\|_F
    \\
    & \leq \|(\bm{I}-\bm{P}_{\bm{Z}})f(\bm{\widetilde \Lambda}_1)\|_F + \|f(\bm{\widetilde \Lambda}_2) \bm{P}_{\bm{Z}}\|_F.
\end{align*}
Replacing $f(\bm \Lambda)$ by $f(\bm{\widetilde \Lambda}_1)$ in~\eqref{eq:first_term} shows
$\|(\bm{I}-\bm{P}_{\bm{Z}})f(\bm{\widetilde \Lambda}_1)\|_F \leq \gamma^{q-3/2}\frac{f(\lambda_k)}{\lambda_k}\|\bm{\Lambda}_2\bm{\Omega}_2\bm{\Omega}_1^{\dagger}\|_F$. It turns out that the second term $\|f(\bm{\widetilde \Lambda}_2) \bm{P}_{\bm{Z}}\|_F = \|\bm{P}_{\bm{Z}}f(\bm{\widetilde \Lambda}_2)\|_F$ obeys the same bound:
\begin{align*}
    &\|\bm{P}_{\bm{Z}}f(\bm{\widetilde \Lambda}_2)\|_F^2 = \tr(f(\bm{\Lambda}_2) \bm{F}(\bm{I}+\bm{F}^T \bm{F})^{-1}\bm{F}^Tf(\bm{\Lambda}_2))\\
    \leq & \tr(f(\bm{\Lambda}_2)\bm{F}\bm{F}^T f(\bm{\Lambda}_2)) = \|f(\bm{\Lambda}_2)\bm{F}\|_F^2 \leq \gamma^{2(q-3/2)} \frac{f(\lambda_{k+1})^2}{\lambda_k^2} \|\bm{\Lambda}_2\bm{\Omega}_2\bm{\Omega}_1^{\dagger}\|_F^2\\
    \leq & \gamma^{2(q-3/2)} \frac{f(\lambda_{k})^2}{\lambda_k^2} \|\bm{\Lambda}_2\bm{\Omega}_2\bm{\Omega}_1^{\dagger}\|_F^2,
\end{align*}
where we used $(\bm{I} + \bm{F}^T \bm{F})^{-1} \preceq \bm{I}$ and the monotonicity of $f$. 
Overall one obtains
\begin{equation*}
    \|(\bm{I}-\bm{P}_{\bm{Z}})f(\bm{\Lambda})\bm{P}_{\bm{Z}}\|_F \leq 2 \gamma^{q-3/2}\frac{f(\lambda_k)}{\lambda_k}\|\bm{\Lambda}_2\bm{\Omega}_2\bm{\Omega}_1^{\dagger}\|_F.
\end{equation*}
Plugging this inequality and inequality~\eqref{eq:first_term} into~\eqref{eq:decomperror} completes the proof.
\end{proof}

\subsubsection{Probabilistic bounds}

Next, we proceed with turning Lemma~\ref{lemma:function_opmon_structural} into the probabilisitic bounds of Theorems~\ref{theorem:frobenius_opmon_expectation} and~\ref{theorem:frobenius_opmon_probability}. For this purpose we need the following results, which 
are common in the literature on randomized low-rank approximation; see, e.g.,~\cite[Sec. 10.1]{rsvd} and~\cite[Lemma 7]{gittensmahoney}.
\begin{lemma}\label{lemma:probabilistic}
Let $\bm{\Omega}_1 \in \mathbb{R}^{k \times (k+p)}$ and $\bm{\Omega}_2 \in \mathbb{R}^{(n-k) \times (k+p)}$ be independent Gaussian matrices. If $\bm{D}$ is a matrix and $k,p \geq 2$, then
\begin{equation}\label{eq:expectation}
    \mathbb{E}\|\bm{D}\bm{\Omega}_2\bm{\Omega}_1^{\dagger}\|_F^2 = \frac{k}{p-1}\|\bm{D}\|_F^2.
\end{equation}
Let $k, p>4$ and $u,t \geq 1$, then
\begin{equation}\label{eq:probability}
    \|\bm{D}\bm{\Omega}_2\bm{\Omega}_1^{\dagger}\|_F \leq \sqrt{\frac{3k}{p+1}} t \|\bm{D}\|_F + \frac{e\sqrt{k+p}}{p+1} tu \|\bm{D}\|_2
\end{equation}
holds with probability $\geq 1 - 2t^{-p} -e^{-u^2/2}$. 
\end{lemma}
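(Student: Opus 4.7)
The plan is to proceed by conditioning on $\bm{\Omega}_1$, so that inside the conditional expectation $\bm{\Omega}_2$ is the only source of randomness and $\bm{\Omega}_1^{\dagger}$ acts as a deterministic matrix. This is the standard HMT strategy for analyzing $\bm{D}\bm{\Omega}_2 \bm{\Omega}_1^{\dagger}$: once $\bm{\Omega}_1$ is frozen, we can treat $\bm{D}$ and $\bm{\Omega}_1^{\dagger}$ as fixed ``side factors'' multiplying a standard Gaussian matrix, which is a setting where both second-moment identities and Gaussian concentration inequalities apply directly.

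For the expectation claim, I plan to use the identity $\mathbb{E}\|\bm{S}\bm{G}\bm{T}\|_F^2 = \|\bm{S}\|_F^2 \|\bm{T}\|_F^2$ valid for any Gaussian $\bm{G}$ and fixed $\bm{S},\bm{T}$ (a direct coordinate-wise computation). Applied conditionally with $\bm{S}=\bm{D}$, $\bm{G}=\bm{\Omega}_2$, $\bm{T}=\bm{\Omega}_1^{\dagger}$, this yields $\mathbb{E}[\|\bm{D}\bm{\Omega}_2\bm{\Omega}_1^{\dagger}\|_F^2 \mid \bm{\Omega}_1] = \|\bm{D}\|_F^2 \|\bm{\Omega}_1^{\dagger}\|_F^2$. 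Taking outer expectation and substituting the well-known identity $\mathbb{E}\|\bm{\Omega}_1^{\dagger}\|_F^2 = k/(p-1)$ for a Gaussian $k\times(k+p)$ matrix (HMT Proposition 10.1, valid for $p\geq 2$) finishes the first claim.

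For the tail bound, I plan to combine Gaussian Lipschitz concentration with tail estimates for $\|\bm{\Omega}_1^{\dagger}\|_F$ and $\|\bm{\Omega}_1^{\dagger}\|_2$. Conditionally on $\bm{\Omega}_1$, the map $\bm{\Omega}_2 \mapsto \|\bm{D}\bm{\Omega}_2\bm{\Omega}_1^{\dagger}\|_F$ is Lipschitz with constant $\|\bm{D}\|_2 \|\bm{\Omega}_1^{\dagger}\|_2$, so by the Gaussian concentration inequality applied to the median (or, via Jensen, the expectation)
\[
    \|\bm{D}\bm{\Omega}_2\bm{\Omega}_1^{\dagger}\|_F \;\le\; \|\bm{D}\|_F \|\bm{\Omega}_1^{\dagger}\|_F + u\,\|\bm{D}\|_2\,\|\bm{\Omega}_1^{\dagger}\|_2
\]
holds with conditional probability at least $1 - e^{-u^2/2}$. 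Next I invoke the HMT tail bounds (Proposition 10.4): for $p\geq 4$ we have $\|\bm{\Omega}_1^{\dagger}\|_F \le \sqrt{3k/(p+1)}\, t$ with probability $\geq 1-t^{-p}$, and $\|\bm{\Omega}_1^{\dagger}\|_2 \le e\sqrt{k+p}/(p+1)\cdot t$ with probability $\geq 1 - t^{-(p+1)} \geq 1 - t^{-p}$. A union bound on these three events, followed by taking the outer expectation (or integrating over $\bm{\Omega}_1$), delivers the claimed probability $\geq 1 - 2t^{-p} - e^{-u^2/2}$.

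The only subtle step is justifying the conditional Gaussian concentration cleanly, since the standard Lipschitz concentration bound is usually stated around the median rather than the expectation; the resolution is either to use the two-sided version of the Gaussian concentration inequality combined with $\mathbb{E}\|\bm{D}\bm{\Omega}_2\bm{\Omega}_1^{\dagger}\|_F \leq \|\bm{D}\|_F\|\bm{\Omega}_1^{\dagger}\|_F$ (Jensen's inequality applied to the second-moment formula), or to cite Proposition 10.3 of HMT directly. Apart from tracking constants carefully so that the final probability statement correctly unions the three events, there is no real obstacle; the argument mirrors the standard derivation of HMT's Theorem 10.7.
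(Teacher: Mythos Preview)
Your proposal is correct and follows exactly the standard HMT argument. Note that the paper does not actually prove this lemma; it simply cites \cite[Sec.~10.1]{rsvd} and \cite[Lemma~7]{gittensmahoney}, and your sketch faithfully reconstructs the proof given in those references (conditioning on $\bm{\Omega}_1$, the identity $\mathbb{E}\|\bm{S}\bm{G}\bm{T}\|_F^2=\|\bm{S}\|_F^2\|\bm{T}\|_F^2$ and $\mathbb{E}\|\bm{\Omega}_1^{\dagger}\|_F^2=k/(p-1)$ for the expectation, and Gaussian Lipschitz concentration combined with the HMT tail bounds on $\|\bm{\Omega}_1^{\dagger}\|_F$ and $\|\bm{\Omega}_1^{\dagger}\|_2$ plus a union bound for the deviation inequality).
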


\begin{proof}[Proof of Theorem~\ref{theorem:frobenius_opmon_expectation}]
By taking expectation on both sides of the structural bound of Lemma~\ref{lemma:function_opmon_structural} and using~\eqref{eq:expectation}, we obtain
\begin{equation*}
    \mathbb{E}\|f(\bm{A})-f(\widehat{\bm{A}}_{q,k+p})\|_F^2 \leq \|f(\bm{\Lambda}_2)\|_F^2 + 5\gamma^{2(q-3/2)}\frac{k}{p-1} \frac{f(\lambda_k)^2}{\lambda_k^2}\|\bm{\Lambda}_2\|_F^2. 
\end{equation*}
The proof is completed by using $\frac{f(\lambda_k)^2}{\lambda_k^2}\|\bm{\Lambda}_2\|_F^2 \leq \|f(\bm{\Lambda}_2)\|_F^2$ from Lemma~\ref{lemma:concave}.
\end{proof}

\begin{proof}[Proof of Theorem~\ref{theorem:frobenius_opmon_probability}]
Using the structural bound of Lemma~\ref{lemma:function_opmon_structural}, subadditivity of the square-root, and~\eqref{eq:probability} yields
\begin{align*}
    &\|f(\bm{A})-f(\widehat{\bm{A}}_{q,k+p})\|_F \\
    \leq & \|f(\bm{A})\|_F + \gamma^{q-3/2} \sqrt{\frac{15k}{p+1}}t \frac{f(\lambda_k)}{\lambda_k}\|\bm{\Lambda}_2\|_F + \gamma^{q-3/2}\frac{e\sqrt{5(k+p)}}{p+1}tu \frac{f(\lambda_k)}{\lambda_k}\|\bm{\Lambda}_2\|_2
\end{align*}
with probability at least $1 - 2t^{-p} - e^{-u^2/2}$.
The proof is completed by using $\frac{f(\lambda_k)^2}{\lambda_k^2}\|\bm{\Lambda}_2\|_{(s)}^2 \leq \|f(\bm{\Lambda}_2)\|_{(s)}^2$ from Lemma~\ref{lemma:concave} for $s = 2,\infty$, where $\|\cdot\|_{(s)}$ denotes the Schatten-$s$ norm.\end{proof}

\subsubsection{Improved bounds for the matrix square root}\label{section:frosqrtm}

Although the square root $f(x) = \sqrt{x}$ satisfies the conditions of Setting~\ref{setting} and the analysis above applies, it turns out that simpler and stronger bounds can be derived in this case. By Lemma~\ref{lemma:upper_bound} we have
\begin{equation*}
    \|\bm{A}^{1/2}-\widehat{\bm{A}}_{q,k+p}^{1/2}\|_F^2 \leq \|\bm{A}^{1/2}\|_F^2 - \|\widehat{\bm{A}}_{q,k+p}^{1/2}\|_F^2 = \tr(\bm{A} - \widehat{\bm{A}}_{q,k+p}).
\end{equation*}
Because of $\bm{A}-\widehat{\bm{A}}_{q,k+p} \succeq \bm{0}$ we have $\tr(\bm{A} - \widehat{\bm{A}}_{q,k+p}) = \|\bm{A}-\widehat{\bm{A}}_{q,k+p}\|_*$. This allows us to apply~\cite[Theorem 4]{gittensmahoney} and obtain that
\begin{equation*}
    \|\bm{A}^{1/2}-\widehat{\bm{A}}^{1/2}_{q,k+p}\|_F^2 \leq \|\bm{\Lambda}_2^{1/2}\|_F^2 + \gamma^{2(q-1)}\|\bm{\Lambda}_2^{1/2}\bm{\Omega}_2\bm{\Omega}_1^{\dagger}\|_F^2.
\end{equation*}
Compared to the structural bound of Lemma~\ref{lemma:function_opmon_structural}, this bound holds already for $q\geq 1$, it improves the exponent of $\gamma$ as well as the constants. Expectation and deviation bounds can be easily derived using Lemma~\ref{lemma:probabilistic}. 

\subsection{Nuclear norm error bounds}\label{section:nuclearnormerrorbounds}

In this section we state and prove the following probabilistic bounds for the approximation error~\eqref{eq:error} in the nuclear norm. 
\begin{theorem}\label{theorem:nuclear_opmon_expectation}
Under Setting~\ref{setting} suppose that $k, p \geq 2$ and $q \geq 1$. Then 
\begin{equation*}
    \mathbb{E}\|f(\bm{A})-f(\widehat{\bm{A}}_{q,k+p})\|_* \leq \left(1 +  \gamma^{2(q-1)} \frac{k}{p-1}\right) \|f(\bm{\Lambda}_2)\|_*.
\end{equation*}
\end{theorem}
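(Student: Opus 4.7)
The plan is to mimic the Frobenius-norm argument but exploit the fact that, for the nuclear norm, the identity $\|\bm{B}-\bm{C}\|_* = \|\bm{B}\|_* - \|\bm{C}\|_*$ holds \emph{exactly} whenever $\bm{B}\succeq \bm{C}\succeq\bm{0}$, which avoids the loss incurred by Lemma~\ref{lemma:upper_bound} in the Frobenius case and, together with the extra factor $\lambda_k^{-1}$ that one can extract from the $\bm{F}^T\bm{F}$ term, accounts for the improved exponent $2(q-1)$ on $\gamma$ and for the bound being valid down to $q=1$.

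First I would invoke~\eqref{eq:approxfrombelow} to conclude $f(\bm{A})-f(\widehat{\bm{A}}_{q,k+p})\succeq 0$, so
\begin{equation*}
    \|f(\bm{A})-f(\widehat{\bm{A}}_{q,k+p})\|_* = \tr\bigl(f(\bm{A})\bigr) - \tr\bigl(f(\widehat{\bm{A}}_{q,k+p})\bigr).
\end{equation*}
Applying the nuclear-norm instance of Lemma~\ref{lemma:move_out_projection} (so that $\tr(f(\widehat{\bm{A}}_{q,k+p})) \geq \tr(\bm{P}_{\bm{Y}}f(\bm{A}))$ with $\bm{Y}=\bm{A}^{q-1/2}\bm{\Omega}$) reduces the error to
\begin{equation*}
    \|f(\bm{A})-f(\widehat{\bm{A}}_{q,k+p})\|_* \leq \tr\bigl((\bm{I}-\bm{P}_{\bm{Y}})f(\bm{A})\bigr).
\end{equation*}

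Next, I would pass to the eigenbasis of $\bm{A}$ exactly as in the proof of Lemma~\ref{lemma:function_opmon_structural}: with $\widetilde{\bm{Y}}=\bm{U}^T\bm{Y}$ and the reduced range $\bm{Z}=\bigl[\begin{smallmatrix}\bm{I}\\\bm{F}\end{smallmatrix}\bigr]$, $\bm{F}=\bm{\Lambda}_2^{q-1/2}\bm{\Omega}_2\bm{\Omega}_1^{\dagger}\bm{\Lambda}_1^{-(q-1/2)}$, the monotonicity $\bm{I}-\bm{P}_{\widetilde{\bm{Y}}}\preceq \bm{I}-\bm{P}_{\bm{Z}}$ and the block form~\eqref{eq:projector_ineq} give
\begin{equation*}
    \tr\bigl((\bm{I}-\bm{P}_{\bm{Z}})f(\bm{\Lambda})\bigr) \leq \tr\bigl(\bm{F}^T\bm{F}\, f(\bm{\Lambda}_1)\bigr) + \tr\bigl(f(\bm{\Lambda}_2)\bigr),
\end{equation*}
since the $(2,2)$ block of $\bm{I}-\bm{P}_{\bm{Z}}$ is dominated by $\bm{I}$ (no Cauchy--Schwarz cross-term is needed here, which is another source of improvement over the Frobenius case).

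The main obstacle is extracting the right prefactor from $\tr(\bm{F}^T\bm{F}\,f(\bm{\Lambda}_1))$. By cyclicity this equals $\tr\bigl(\bm{G}^T\bm{G}\,\bm{\Lambda}_1^{-(2q-1)}f(\bm{\Lambda}_1)\bigr)$ with $\bm{G}=\bm{\Lambda}_2^{q-1/2}\bm{\Omega}_2\bm{\Omega}_1^{\dagger}$. The diagonal entries of $\bm{\Lambda}_1^{-(2q-1)}f(\bm{\Lambda}_1)$ are $f(\lambda_i)/\lambda_i^{2q-1}$; combining the decreasing nature of $f(x)/x$ from Lemma~\ref{lemma:concave} with $x^{-(2q-2)}$ decreasing for $q\geq 1$, this matrix is bounded above by $\tfrac{f(\lambda_k)}{\lambda_k^{2q-1}}\bm{I}$. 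Hence
\begin{equation*}
    \tr(\bm{F}^T\bm{F}\,f(\bm{\Lambda}_1)) \leq \frac{f(\lambda_k)}{\lambda_k^{2q-1}}\|\bm{\Lambda}_2^{q-1/2}\bm{\Omega}_2\bm{\Omega}_1^{\dagger}\|_F^2 \leq \gamma^{2(q-1)}\frac{f(\lambda_k)}{\lambda_k}\|\bm{\Lambda}_2^{1/2}\bm{\Omega}_2\bm{\Omega}_1^{\dagger}\|_F^2,
\end{equation*}
using $\|\bm{\Lambda}_2^{q-1}\|_2^2 \leq \gamma^{2(q-1)}\lambda_k^{2(q-1)}$ to peel off the excess powers of $\bm{\Lambda}_2$.

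Finally, taking expectation and applying~\eqref{eq:expectation} with $\bm{D}=\bm{\Lambda}_2^{1/2}$ gives $\mathbb{E}\|\bm{\Lambda}_2^{1/2}\bm{\Omega}_2\bm{\Omega}_1^{\dagger}\|_F^2 = \tfrac{k}{p-1}\|\bm{\Lambda}_2\|_*$, and Lemma~\ref{lemma:concave} (in its nuclear-norm form, $\tfrac{f(\lambda_k)}{\lambda_k}\|\bm{\Lambda}_2\|_* \leq \|f(\bm{\Lambda}_2)\|_*$) converts the prefactor into $\|f(\bm{\Lambda}_2)\|_*$. Adding the $\tr(f(\bm{\Lambda}_2))=\|f(\bm{\Lambda}_2)\|_*$ contribution from the second block yields the claimed bound.
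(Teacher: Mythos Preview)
Your argument is correct and leads to exactly the same structural bound as the paper's Lemma~\ref{lemma:nuclear_opmon_structural}, followed by the same expectation step via~\eqref{eq:expectation} and Lemma~\ref{lemma:concave}. The only difference is in packaging: the paper rewrites $\tr(f(\bm{A}))-\tr(\bm{P}_{\bm{Y}}f(\bm{A})\bm{P}_{\bm{Y}})=\|(\bm{I}-\bm{P}_{\bm{Y}})f(\bm{A})^{1/2}\|_F^2$, introduces $g=f^{1/2}$ (also operator monotone), and then reuses the Frobenius projection inequality~\eqref{eq:projection} verbatim with $g$ in place of $f$; you instead bound $\tr((\bm{I}-\bm{P}_{\bm{Z}})f(\bm{\Lambda}))$ directly from the block form of $\bm{I}-\bm{P}_{\bm{Z}}$. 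Since $\|\bm{F}g(\bm{\Lambda}_1)\|_F^2=\tr(\bm{F}^T\bm{F}\,f(\bm{\Lambda}_1))$ and $\|g(\bm{\Lambda}_2)\|_F^2=\tr(f(\bm{\Lambda}_2))$, the two routes coincide term by term; your version is slightly more direct in that it never needs the auxiliary fact that $\sqrt{f}$ is operator monotone.
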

\begin{theorem}\label{theorem:nuclear_opmon_probability}
    Under Setting~\ref{setting} suppose that $k,p > 4$ and $u,t,q \geq 1$. Then
    \begin{align*}
        &\|f(\bm{A})-f(\widehat{\bm{A}}_{q,k+p})\|_* \leq \left(1 + \gamma^{2(q-1)}\frac{3k}{p+1}\right)\|f(\bm{\Lambda}_2)\|_* +\\
        &\gamma^{2(q-1)}\left( \frac{e^2(k+p)}{(p+1)^2} t^2 u^2 \|f(\bm{\Lambda}_2)\|_2 + \frac{2e\sqrt{3k(k+p)}}{(p+1)^{3/2}} tu \sqrt{\|f(\bm{\Lambda}_2)\|_* \|f(\bm{\Lambda}_2)\|_2}\right).
    \end{align*}
    holds with probability at least $1-2t^{-p} - e^{-u^2/2}$.
    
\end{theorem}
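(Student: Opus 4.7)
The plan is to mirror the template used for Theorems~\ref{theorem:frobenius_opmon_expectation}--\ref{theorem:frobenius_opmon_probability}: first exploit~\eqref{eq:approxfrombelow} to convert the nuclear norm of the error into a trace, then derive a structural bound analogous to Lemma~\ref{lemma:function_opmon_structural}, and finally apply the probabilistic estimate~\eqref{eq:probability} from Lemma~\ref{lemma:probabilistic}.

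The first step collapses the nuclear norm to a trace. Since $f(\widehat{\bm A}_{q,k+p})$ is SPSD and \eqref{eq:approxfrombelow} gives $f(\widehat{\bm A}_{q,k+p}) \preceq f(\bm A)$, both matrices are SPSD and
\[
    \|f(\bm A) - f(\widehat{\bm A}_{q,k+p})\|_* = \tr(f(\bm A)) - \tr(f(\widehat{\bm A}_{q,k+p})).
\]
Writing $\bm P = \bm P_{\bm A^{q-1/2}\bm\Omega}$ and applying Lemma~\ref{lemma:move_out_projection} with the nuclear norm, together with $\bm P f(\bm A) \bm P \succeq 0$ (so that $\|\bm P f(\bm A)\bm P\|_* = \tr(\bm P f(\bm A))$), yields
\[
    \|f(\bm A) - f(\widehat{\bm A}_{q,k+p})\|_* \leq \tr\bigl((\bm I - \bm P) f(\bm A)\bigr).
\]

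Next, I would derive a structural analogue of Lemma~\ref{lemma:function_opmon_structural} for this trace. Diagonalizing via $\bm U$ gives $\bm U^T \bm P \bm U = \bm P_{\widetilde{\bm Y}}$ with $\widetilde{\bm Y} = \bm U^T \bm A^{q-1/2}\bm\Omega$, and I introduce the same surrogate basis as in the proof of Lemma~\ref{lemma:function_opmon_structural},
\[
    \bm Z = \widetilde{\bm Y}\bm\Omega_1^\dagger\bm\Lambda_1^{-(q-1/2)} = \begin{bmatrix}\bm I\\ \bm F\end{bmatrix}, \quad \bm F = \bm\Lambda_2^{q-1/2}\bm\Omega_2\bm\Omega_1^\dagger\bm\Lambda_1^{-(q-1/2)}.
\]
Because $\range(\bm Z)\subseteq \range(\widetilde{\bm Y})$ and $f(\bm\Lambda) \succeq 0$, one has $\tr((\bm I-\bm P_{\widetilde{\bm Y}})f(\bm\Lambda)) \leq \tr((\bm I-\bm P_{\bm Z})f(\bm\Lambda))$. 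Using the block formulas for $\bm I - \bm P_{\bm Z}$ in~\eqref{eq:projector_ineq} and the inequalities listed there gives
\[
    \tr((\bm I-\bm P_{\bm Z})f(\bm\Lambda)) \leq \tr(\bm F^T \bm F f(\bm\Lambda_1)) + \tr(f(\bm\Lambda_2)) = \|\bm F f(\bm\Lambda_1)^{1/2}\|_F^2 + \|f(\bm\Lambda_2)\|_*.
\]
The central calculation is an entry-wise bound on $\|\bm F f(\bm\Lambda_1)^{1/2}\|_F^2$: decomposing $2q-1 = 1 + 2(q-1)$ and combining $\lambda_{k+i}/\lambda_j \leq \gamma$ with the concavity property $f(\lambda_j)/\lambda_j \leq f(\lambda_{k+i})/\lambda_{k+i}$ from Lemma~\ref{lemma:concave} yields
\[
    (\bm F f(\bm\Lambda_1)^{1/2})_{ij}^2 \leq \gamma^{2(q-1)} f(\lambda_{k+i})(\bm\Omega_2\bm\Omega_1^\dagger)_{ij}^2,
\]
so that $\|\bm F f(\bm\Lambda_1)^{1/2}\|_F^2 \leq \gamma^{2(q-1)} \|f(\bm\Lambda_2)^{1/2}\bm\Omega_2\bm\Omega_1^\dagger\|_F^2$. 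Collecting the pieces produces the structural bound
\[
    \|f(\bm A) - f(\widehat{\bm A}_{q,k+p})\|_* \leq \|f(\bm\Lambda_2)\|_* + \gamma^{2(q-1)}\|f(\bm\Lambda_2)^{1/2}\bm\Omega_2\bm\Omega_1^\dagger\|_F^2.
\]

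The final step is routine: apply~\eqref{eq:probability} of Lemma~\ref{lemma:probabilistic} with $\bm D = f(\bm\Lambda_2)^{1/2}$, noting $\|\bm D\|_F^2 = \|f(\bm\Lambda_2)\|_*$ and $\|\bm D\|_2^2 = \|f(\bm\Lambda_2)\|_2$, and then square the resulting inequality. Expanding $(a+b)^2 = a^2 + 2ab + b^2$ reproduces the three contributions on the right-hand side of the theorem, with the success probability $1 - 2t^{-p} - e^{-u^2/2}$ inherited directly from the lemma. I expect the main obstacle to be the entry-wise analysis of $\|\bm F f(\bm\Lambda_1)^{1/2}\|_F^2$, since it must simultaneously exploit the spectral gap and operator monotonicity to transfer weight from the retained spectrum $\bm\Lambda_1$ onto the tail $\bm\Lambda_2$. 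Crucially, the splitting $2q-1 = 1 + 2(q-1)$ is tight enough that this argument works already for $q = 1$, unlike the Frobenius case of Lemma~\ref{lemma:function_opmon_structural}, which required $q \geq 2$.
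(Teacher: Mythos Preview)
Your proposal is correct and follows the same overall architecture as the paper: reduce the nuclear norm error to a trace via~\eqref{eq:approxfrombelow}, derive a structural bound through the projector $\bm P_{\bm Z}$, and then invoke the deviation estimate~\eqref{eq:probability} with a suitable $\bm D$ and square the result.

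The difference lies in how the structural bound is obtained. The paper (Lemma~\ref{lemma:nuclear_opmon_structural}) observes that $g=f^{1/2}$ is again operator monotone, reuses the projection inequality~\eqref{eq:projection} for $g$, and then pulls out $\|\bm\Lambda_1^{-1/2}f(\bm\Lambda_1)^{1/2}\|_2^2=f(\lambda_k)/\lambda_k$ by submultiplicativity, arriving at
\[
\|f(\bm A)-f(\widehat{\bm A}_{q,k+p})\|_*\le \|f(\bm\Lambda_2)\|_* + \gamma^{2(q-1)}\,\tfrac{f(\lambda_k)}{\lambda_k}\,\|\bm\Lambda_2^{1/2}\bm\Omega_2\bm\Omega_1^\dagger\|_F^2 .
\]
The conversion to $\|f(\bm\Lambda_2)\|_*$ and $\|f(\bm\Lambda_2)\|_2$ in the final statement then requires a separate appeal to Lemma~\ref{lemma:concave}. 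Your entry-wise argument instead exploits $f(x)/x$ decreasing already inside the sum, producing
\[
\|f(\bm A)-f(\widehat{\bm A}_{q,k+p})\|_*\le \|f(\bm\Lambda_2)\|_* + \gamma^{2(q-1)}\,\|f(\bm\Lambda_2)^{1/2}\bm\Omega_2\bm\Omega_1^\dagger\|_F^2 ,
\]
so that choosing $\bm D=f(\bm\Lambda_2)^{1/2}$ in~\eqref{eq:probability} lands directly on the norms of $f(\bm\Lambda_2)$ without an extra step. The two routes give identical final bounds; the paper's intermediate inequality is sharper pointwise (since $\tfrac{f(\lambda_k)}{\lambda_k}\lambda_{k+i}\le f(\lambda_{k+i})$), but that slack is exactly what Lemma~\ref{lemma:concave} gives away at the end. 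Your route is slightly more self-contained in that it avoids the detour through $g=f^{1/2}$ and the final use of Lemma~\ref{lemma:concave}.
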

Because of $f(\bm{A}) \succeq f(\widehat{\bm{A}}_{q,k+p})$ the nuclear norm of $f(\bm{A})-f(\widehat{\bm{A}}_{q,k+p})$ simplifies to the trace. Hence, Theorem~\ref{theorem:nuclear_opmon_expectation} and Theorem~\ref{theorem:nuclear_opmon_probability} provide bounds on the error of the trace approximation
\begin{equation*}
    \tr(f(\widehat{\bm{A}}_{q,k+p})) \approx \tr(f(\bm{A})).
\end{equation*} Similar bounds appear in \cite{saibaba} for $f(x) = \log(1+x)$; see Section~\ref{section:trest} for further discussion.

\subsubsection{Structural bound}

We now proceed with deriving a structural bound for the nuclear norm error. 
\begin{lemma}\label{lemma:nuclear_opmon_structural}
Under Setting~\ref{setting} and assuming $\rank(\bm{\Omega}_1) = k$ we have
\begin{equation*}
    \|f(\bm{A})-f(\widehat{\bm{A}}_{q,k+p})\|_* \leq \|f(\bm{\Lambda}_2)\|_* + \gamma^{2(q-1)} \frac{f(\lambda_k)}{\lambda_k} \|\bm{\Lambda}_2^{1/2}\bm{\Omega}_2 \bm{\Omega}_1^{\dagger}\|_F^2.
\end{equation*}
\end{lemma}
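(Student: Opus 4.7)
The plan is to mimic the strategy of Lemma~\ref{lemma:function_opmon_structural}, but to exploit the \emph{linearity} of the trace (rather than the squared Frobenius norm) in $f(\bm{A})$; this is what allows $q = 1$ to be admitted and replaces $\|\bm{\Lambda}_2\bm{\Omega}_2\bm{\Omega}_1^{\dagger}\|_F^2$ by the milder $\|\bm{\Lambda}_2^{1/2}\bm{\Omega}_2\bm{\Omega}_1^{\dagger}\|_F^2$.

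First I would convert the nuclear norm into a trace: because~\eqref{eq:approxfrombelow} gives $f(\bm{A})-f(\widehat{\bm{A}}_{q,k+p}) \succeq 0$, one has $\|f(\bm{A})-f(\widehat{\bm{A}}_{q,k+p})\|_* = \tr(f(\bm{A}))-\tr(f(\widehat{\bm{A}}_{q,k+p}))$. Applying Lemma~\ref{lemma:move_out_projection} with the nuclear norm yields $\tr(f(\widehat{\bm{A}}_{q,k+p})) \ge \tr(\bm{P}_{\bm{A}^{q-1/2}\bm{\Omega}}f(\bm{A}))$, so that
\begin{equation*}
    \|f(\bm{A})-f(\widehat{\bm{A}}_{q,k+p})\|_* \le \tr\bigl(f(\bm{A})(\bm{I}-\bm{P}_{\bm{A}^{q-1/2}\bm{\Omega}})\bigr) = \tr\bigl(f(\bm{\Lambda})(\bm{I}-\bm{P}_{\widetilde{\bm Y}})\bigr),
\end{equation*}
where $\widetilde{\bm Y} = \bm U^T \bm{A}^{q-1/2}\bm{\Omega}$ and we used $\bm U^T\bm{P}_{\bm A^{q-1/2}\bm\Omega}\bm U = \bm P_{\widetilde{\bm Y}}$.

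Next I would substitute the same test subspace as in the proof of Lemma~\ref{lemma:function_opmon_structural}, namely $\bm Z = \widetilde{\bm Y}\bm{\Omega}_1^{\dagger}\bm{\Lambda}_1^{-(q-1/2)} = \bigl[\bm I;\,\bm F\bigr]$ with $\bm{F} = \bm{\Lambda}_2^{q-1/2}\bm{\Omega}_2\bm{\Omega}_1^{\dagger}\bm{\Lambda}_1^{-(q-1/2)}$. Since $\range(\bm Z)\subseteq \range(\widetilde{\bm Y})$ we have $\bm I - \bm P_{\widetilde{\bm Y}} \preceq \bm I - \bm P_{\bm Z}$, and because $f(\bm{\Lambda}) \succeq 0$ the trace inequality
\begin{equation*}
    \tr\bigl(f(\bm{\Lambda})(\bm{I}-\bm{P}_{\widetilde{\bm Y}})\bigr) \le \tr\bigl(f(\bm{\Lambda})(\bm{I}-\bm{P}_{\bm Z})\bigr)
\end{equation*}
follows. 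Using the block formula for $\bm I - \bm P_{\bm Z}$ from~\eqref{eq:projector_ineq} and splitting $f(\bm \Lambda) = \diag(f(\bm{\Lambda}_1),f(\bm{\Lambda}_2))$, the right-hand side breaks into a $(1,1)$-block contribution bounded by $\tr(f(\bm{\Lambda}_1)\bm F^T\bm F) = \|\bm F f(\bm{\Lambda}_1)^{1/2}\|_F^2$ (using $\bm I-(\bm I+\bm F^T\bm F)^{-1}\preceq \bm F^T\bm F$) and a $(2,2)$-block contribution bounded by $\tr(f(\bm{\Lambda}_2)) = \|f(\bm{\Lambda}_2)\|_*$ (using $\bm I - \bm F(\bm I+\bm F^T\bm F)^{-1}\bm F^T \preceq \bm I$).

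The main obstacle, and the part where the argument diverges from the Frobenius case, is to bound $\|\bm F f(\bm{\Lambda}_1)^{1/2}\|_F^2$ with the correct factors. The trick is to peel off one half-power of $\bm{\Lambda}_2$ by writing $\bm{\Lambda}_2^{q-1/2} = \bm{\Lambda}_2^{q-1}\bm{\Lambda}_2^{1/2}$ and absorbing the remaining diagonals into operator norms:
\begin{equation*}
    \|\bm F f(\bm{\Lambda}_1)^{1/2}\|_F \le \|\bm{\Lambda}_2^{q-1}\|_2\,\|\bm{\Lambda}_2^{1/2}\bm{\Omega}_2\bm{\Omega}_1^{\dagger}\|_F\,\|\bm{\Lambda}_1^{-(q-1/2)}f(\bm{\Lambda}_1)^{1/2}\|_2.
\end{equation*}
Then $\|\bm{\Lambda}_2^{q-1}\|_2 \le \lambda_{k+1}^{q-1} = \gamma^{q-1}\lambda_k^{q-1}$, and Lemma~\ref{lemma:concave} (monotonicity of $f(x)/x$) gives $\|\bm{\Lambda}_1^{-(q-1/2)}f(\bm{\Lambda}_1)^{1/2}\|_2 \le \sqrt{f(\lambda_k)/\lambda_k}\,\lambda_k^{-(q-1)}$. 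Squaring and combining produces exactly $\gamma^{2(q-1)}\frac{f(\lambda_k)}{\lambda_k}\|\bm{\Lambda}_2^{1/2}\bm{\Omega}_2\bm{\Omega}_1^{\dagger}\|_F^2$, which together with the $\|f(\bm{\Lambda}_2)\|_*$ term yields the claimed bound. Note that all steps remain valid for $q = 1$, since then $\bm{\Lambda}_2^{q-1} = \bm I$ and $\lambda_k^{q-1}=1$ trivially.
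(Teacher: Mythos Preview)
Your argument is correct and follows essentially the same route as the paper. The only packaging difference is that the paper rewrites the trace gap as a squared Frobenius norm via $\tr(f(\bm{A}))-\tr(\bm{P}_{\bm Y}f(\bm{A})\bm{P}_{\bm Y})=\|(\bm{I}-\bm{P}_{\bm Y})g(\bm{A})\|_F^2$ for $g=f^{1/2}$ (which is again operator monotone) and then reuses inequality~\eqref{eq:projection} verbatim for $g$, whereas you expand $\tr\bigl(f(\bm{\Lambda})(\bm{I}-\bm{P}_{\bm Z})\bigr)$ directly via the block structure; both paths land on the identical intermediate bound $\|f(\bm{\Lambda}_2)\|_* + \|\bm{F}f(\bm{\Lambda}_1)^{1/2}\|_F^2$ and finish the same way.
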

\begin{proof} As in the proof of Lemma~\ref{lemma:function_opmon_structural}, we set $\bm Y=\bm{A}^{q-1/2} \bm \Omega$.
Applying Lemma~\ref{lemma:move_out_projection} for the nuclear norm yields
\[
    \|f(\bm{A})-f(\widehat{\bm{A}}_{q,k+p})\|_* = \tr(f(\bm{A})) - \tr(f(\widehat{\bm{A}}_{q,k+p})) 
    \leq  \tr(f(\bm{A})) - \tr(\bm{P}_{\bm Y}f(\bm{A})\bm{P}_{\bm Y}).
\]
Using the factorization $\bm{P}_{\bm Y}f(\bm{A})\bm{P}_{\bm Y} = \bm{P}_{\bm Y}f(\bm{A})^{1/2} \big( \bm{P}_{\bm Y}f(\bm{A})^{1/2} \big)^T$, one obtains
\[
    \tr(f(\bm{A}) - \tr(\bm{P}_{\bm Y}f(\bm{A})\bm{P}_{\bm Y})
    = \|f(\bm{A})^{1/2}\|_F^2 - \|\bm{P}_{\bm Y}f(\bm{A})^{1/2}\|_F^2 = \|(\bm{I}-\bm{P}_{\bm Y})f(\bm{A})^{1/2}\|_F^2.
\]
Being a composition of operator monotone functions, the function $g = f^{1/2}$ is operator monotone~\cite[Exercise V.1.10]{bhatia}. Applying the inequality~\eqref{eq:projection} from the proof of Lemma~\ref{lemma:function_opmon_structural} to $g$ instead of $f$ yields
\begin{align*}
    \|(\bm{I}-\bm{P}_{\bm Y})g(\bm{A})\|_F^2  \leq & \|g(\bm{\Lambda}_2)\|_F^2 + \|\bm{F}g(\bm{\Lambda}_1)\|_F^2\\
    = & \|f(\bm{\Lambda}_2)\|_* + \|\bm{\Lambda}_2^{q-1/2} \bm{\Omega}_2\bm{\Omega}_1^{\dagger} \bm{\Lambda}_1^{-(q-1/2)}f(\bm{\Lambda}_1)^{1/2}\|_F^2\\
    \leq & \|f(\bm{\Lambda}_2)\|_* + \gamma^{2(q-1)}\|\bm{\Lambda}_1^{-1/2} f(\bm{\Lambda}_1)^{1/2}\|_2^2\|\bm{\Lambda}_2^{1/2} \bm{\Omega}_2\bm{\Omega}_1^{\dagger}\|_F^2\\
    = & \|f(\bm{\Lambda}_2)\|_* + \gamma^{2(q-1)}\frac{f(\lambda_k)}{\lambda_k}\|\bm{\Lambda}_2^{1/2} \bm{\Omega}_2\bm{\Omega}_1^{\dagger}\|_F^2,
\end{align*}
where the final equality follows from $\|\bm{\Lambda}_1^{-1/2} f(\bm{\Lambda}_1)^{1/2}\|_2^2 = f(\lambda_k)/\lambda_k$ because $f(x)/x$ is decreasing.
\end{proof}

\subsubsection{Probabilistic bounds}

As in the case of the Frobenius norm, we use the results of Lemma~\ref{lemma:probabilistic} to turn Lemma~\ref{lemma:nuclear_opmon_structural} into the probabilistic bounds of Theorems~\ref{theorem:nuclear_opmon_expectation} and~\ref{theorem:nuclear_opmon_probability}. 
\begin{proof}[Proof of Theorem~\ref{theorem:nuclear_opmon_expectation}]
By taking expectation on both sides of the inequality of Lemma~\ref{lemma:nuclear_opmon_structural} and using~\eqref{eq:expectation} we obtain
\begin{equation*}
    \mathbb{E}\|f(\bm{A})-f(\widehat{\bm{A}}_{q,k+p})\|_* \leq \|f(\bm{\Lambda}_2)\|_* + \gamma^{2(q-1)}\frac{k}{p-1} \frac{f(\lambda_k)}{\lambda_k}\|\bm{\Lambda}_2\|_*. 
\end{equation*}
Applying Lemma~\ref{lemma:concave} completes the proof.
\end{proof}

\begin{proof}[Proof of Theorem~\ref{theorem:nuclear_opmon_probability}]
Applying~\eqref{eq:probability} to the term 
$\|\bm{\Lambda}_2^{1/2}\bm{\Omega}_2 \bm{\Omega}_1^{\dagger}\|_F$ in the result of Lemma~\ref{lemma:nuclear_opmon_structural} and using $\|\bm{\Lambda}_2^{1/2}\|_F = \sqrt{\|\bm{\Lambda}_2\|_*}$, $\|\bm{\Lambda}_2^{1/2}\|_2 = \sqrt{\|\bm{\Lambda}_2\|_2}$ yield that
\begin{align*}
    &\|f(\bm{A})-f(\widehat{\bm{A}}_{q,k+p})\|_* \\
    \leq & \|f(\bm{A})\|_* + \gamma^{2(q-1)}\frac{f(\lambda_k)}{\lambda_k}\left(\sqrt{\frac{3k}{p+1}}t \sqrt{\|\bm{\Lambda}_2\|_*} + \frac{e\sqrt{k+p}}{p+1} tu \sqrt{\|\bm{\Lambda}_2\|_2}\right)^2
\end{align*}
holds with probability $\geq 1 - 2t^{-p} - e^{-u^2/2}$. Expanding the square and applying Lemma~\ref{lemma:concave} completes the proof.
\end{proof}

\subsubsection{Improved bounds for the matrix square-root}

As in the case for the Frobenius norm, it is possible to improve the exponent on $\gamma$ when $f(x) = \sqrt{x}$. By Lemma~\ref{lemma:nuclear_opmon_structural} we have
\begin{align*}
    \|\bm{A}^{1/2}-\widehat{\bm{A}}_{q,k+p}^{1/2}\|_* \leq & \|\bm{\Lambda}_2^{1/2}\|_* + \gamma^{2(q-1)} \frac{1}{\sqrt{\lambda_k}} \|\bm{\Lambda}_2^{1/2}\bm{\Omega}_2\bm{\Omega}_1^{\dagger}\|_F^2 \\
    \leq & \|\bm{\Lambda}_2^{1/2}\|_* + \gamma^{2q-3/2}\|\bm{\Lambda}_2^{1/4}\bm{\Omega}_2\bm{\Omega}_1^{\dagger}\|_F^2.
\end{align*}
Again, we obtain expectation and deviation bounds using \eqref{eq:expectation} and \eqref{eq:probability}. 

\subsection{Operator norm error bounds}

Finally, we present bounds for the error \eqref{eq:error} in the operator norm.
\begin{theorem}\label{theorem:operator_norm_expectation}
Under Setting~\ref{setting} suppose that $k,p \geq 2$ and $q \geq 1$. Then
\begin{align*}
    &\mathbb{E} \|f(\bm{A})-f(\widehat{\bm{A}}_{q,k+p}) \|_2 \\
    \leq & \|f(\bm{\Lambda}_2)\|_2 + \left\|f\left(\gamma^{2(q-1)}\frac{2k}{p-1}\bm{\Lambda}_2\right)\right\|_2 + \left\|f\left(\gamma^{2(q-1)} \frac{2e^2(k+p)}{p^2-1} \bm{\Lambda}_2\right)\right\|_*
\end{align*}
\end{theorem}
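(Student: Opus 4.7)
The plan is to first pass from $\|f(\bm{A})-f(\widehat{\bm{A}}_{q,k+p})\|_2$ to $\|f(\bm{A}-\widehat{\bm{A}}_{q,k+p})\|_2$, then dominate the (sorted) eigenvalues of $\bm{A}-\widehat{\bm{A}}_{q,k+p}$ by those of a matrix that splits additively into a deterministic $\bm{\Lambda}_2$ piece and a stochastic piece, and finally take expectation by combining operator and scalar subadditivity of $f$ with Jensen's inequality and a second-moment Gaussian estimate.

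Since $\bm{A}\succeq\widehat{\bm{A}}_{q,k+p}$ by \eqref{eq:approxfrombelow}, Lemma~\ref{lemma:loewner_trace}(iii) gives $\|f(\bm{A})-f(\widehat{\bm{A}}_{q,k+p})\|_2\leq\|f(\bm{A}-\widehat{\bm{A}}_{q,k+p})\|_2$. To control the right-hand side, I would borrow the notation from the proof of Lemma~\ref{lemma:function_opmon_structural} (in particular $\widetilde{\bm{Y}}$, $\bm{F}$, and $\bm{Z}$). A short Woodbury computation gives $\bm{I}-\bm{P}_{\bm{Z}}=\bm{G}(\bm{I}+\bm{F}\bm{F}^T)^{-1}\bm{G}^T$, where $\bm{G}$ is the $n\times(n-k)$ matrix with top block $\bm{F}^T$ and bottom block $-\bm{I}$, and a direct calculation shows $\bm{G}^T\bm{\Lambda}\bm{G}=\bm{F}\bm{\Lambda}_1\bm{F}^T+\bm{\Lambda}_2$. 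Cycling factors then shows that $\bm{\Lambda}^{1/2}(\bm{I}-\bm{P}_{\bm{Z}})\bm{\Lambda}^{1/2}$ has the same non-zero eigenvalues as $(\bm{I}+\bm{F}\bm{F}^T)^{-1/2}(\bm{F}\bm{\Lambda}_1\bm{F}^T+\bm{\Lambda}_2)(\bm{I}+\bm{F}\bm{F}^T)^{-1/2}$, and combining this with $(\bm{I}+\bm{F}\bm{F}^T)^{-1}\preceq\bm{I}$ and a second similarity (as used in Lemma~\ref{lemma:loewner_trace}(i)) produces the sorted-eigenvalue dominance $\lambda_i(\bm{A}-\widehat{\bm{A}}_{q,k+p})\leq\lambda_i(\bm{F}\bm{\Lambda}_1\bm{F}^T+\bm{\Lambda}_2)$ (using also $\bm{I}-\bm{P}_{\widetilde{\bm{Y}}}\preceq\bm{I}-\bm{P}_{\bm{Z}}$). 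Monotonicity of $f$ lifts this dominance to the eigenvalues of $f(\cdot)$, and Fan's dominance theorem yields $\|f(\bm{A}-\widehat{\bm{A}}_{q,k+p})\|_2\leq\|f(\bm{F}\bm{\Lambda}_1\bm{F}^T+\bm{\Lambda}_2)\|_2$.

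Since $f$ is operator monotone with $f(0)=0$, it is operator concave and hence operator subadditive: $f(\bm{B}+\bm{C})\preceq f(\bm{B})+f(\bm{C})$ for PSD $\bm{B},\bm{C}$. Combining this with Lemma~\ref{lemma:loewner_trace}(i) and the triangle inequality gives $\|f(\bm{F}\bm{\Lambda}_1\bm{F}^T+\bm{\Lambda}_2)\|_2\leq\|f(\bm{F}\bm{\Lambda}_1\bm{F}^T)\|_2+\|f(\bm{\Lambda}_2)\|_2$. Observing that $\|f(\bm{M})\|_2=f(\|\bm{M}\|_2)$ for PSD $\bm{M}$ (a consequence of $\bm{M}\preceq\|\bm{M}\|_2\bm{I}$, operator monotonicity, and $f(0)=0$), and that $\|\bm{F}\bm{\Lambda}_1^{1/2}\|_2^2\leq\gamma^{2(q-1)}\|\bm{\Lambda}_2^{1/2}\bm{\Omega}_2\bm{\Omega}_1^{\dagger}\|_2^2$ by the same submultiplicativity calculation as in the proof of Lemma~\ref{lemma:nuclear_opmon_structural}, one obtains $\|f(\bm{F}\bm{\Lambda}_1\bm{F}^T)\|_2\leq f\bigl(\gamma^{2(q-1)}\|\bm{\Lambda}_2^{1/2}\bm{\Omega}_2\bm{\Omega}_1^{\dagger}\|_2^2\bigr)$. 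Taking expectation and pulling it inside $f$ via Jensen's inequality (using concavity of $f$) reduces the remaining task to bounding $\mathbb{E}\|\bm{\Lambda}_2^{1/2}\bm{\Omega}_2\bm{\Omega}_1^{\dagger}\|_2^2$.

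For this I invoke the operator-norm counterpart of Lemma~\ref{lemma:probabilistic}, namely $\mathbb{E}\|\bm{D}\bm{\Omega}_2\bm{\Omega}_1^{\dagger}\|_2^2\leq\frac{2k}{p-1}\|\bm{D}\|_2^2+\frac{2e^2(k+p)}{p^2-1}\|\bm{D}\|_F^2$, which follows by conditioning on $\bm{\Omega}_1$, applying a second-moment Chevet-type inequality to the Gaussian $\bm{\Omega}_2$, and combining with $\mathbb{E}\|\bm{\Omega}_1^{\dagger}\|_F^2=k/(p-1)$ and $\mathbb{E}\|\bm{\Omega}_1^{\dagger}\|_2^2\leq e^2(k+p)/(p+1)^2$ via $(a+b)^2\leq 2(a^2+b^2)$. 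Specializing to $\bm{D}=\bm{\Lambda}_2^{1/2}$ (so that $\|\bm{D}\|_2^2=\lambda_{k+1}$ and $\|\bm{D}\|_F^2=\tr(\bm{\Lambda}_2)$) and then applying the scalar subadditivity $f(a+b)\leq f(a)+f(b)$ twice---once to separate the two expectation terms, and once to pass from $f\bigl(c\,\tr(\bm{\Lambda}_2)\bigr)$ to $\sum_{i>k}f(c\lambda_i)=\|f(c\bm{\Lambda}_2)\|_*$---produces the three advertised terms. The main obstacle is the eigenvalue-dominance step: one must exploit the $(\bm{I}+\bm{F}\bm{F}^T)^{-1}$ rewriting together with a similarity argument to avoid the factor of $2$ that would arise from a naive block-diagonal relaxation of $\bm{\Lambda}^{1/2}(\bm{I}-\bm{P}_{\bm{Z}})\bm{\Lambda}^{1/2}$; this refinement is what delivers the clean $\|f(\bm{\Lambda}_2)\|_2$ leading term rather than $\|f(2\bm{\Lambda}_2)\|_2$.
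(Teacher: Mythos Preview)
Your overall strategy is sound and the probabilistic part (Jensen's inequality, the second-moment estimate $\mathbb{E}\|\bm{D}\bm{\Omega}_2\bm{\Omega}_1^{\dagger}\|_2^2\leq\frac{2k}{p-1}\|\bm{D}\|_2^2+\frac{2e^2(k+p)}{p^2-1}\|\bm{D}\|_F^2$, and the two applications of scalar subadditivity) matches the paper exactly; this estimate is stated in the paper as Lemma~\ref{lemma:probabilistic3}. The Woodbury identity $\bm{I}-\bm{P}_{\bm Z}=\bm{G}(\bm{I}+\bm{F}\bm{F}^T)^{-1}\bm{G}^T$ and the eigenvalue-cycling step are correct and give the clean dominance $\lambda_i(\bm{A}-\widehat{\bm{A}}_{q,k+p})\le\lambda_i(\bm{F}\bm{\Lambda}_1\bm{F}^T+\bm{\Lambda}_2)$.

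There is, however, a genuine gap: the claim that operator concavity of $f$ with $f(0)=0$ implies \emph{operator} subadditivity $f(\bm{B}+\bm{C})\preceq f(\bm{B})+f(\bm{C})$ is false. For $f(x)=\sqrt{x}$, $\bm{B}=\begin{pmatrix}2&0\\0&0\end{pmatrix}$, $\bm{C}=\begin{pmatrix}1&1\\1&1\end{pmatrix}$ one computes $\bm{e}_2^T(\sqrt{\bm{B}}+\sqrt{\bm{C}})\bm{e}_2=1/\sqrt{2}\approx 0.707$ while $\bm{e}_2^T\sqrt{\bm{B}+\bm{C}}\,\bm{e}_2=(1+\sqrt{2})/\sqrt{4+2\sqrt{2}}\approx 0.924$, so $\sqrt{\bm{B}}+\sqrt{\bm{C}}-\sqrt{\bm{B}+\bm{C}}$ is not PSD. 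Fortunately the conclusion you need, $\|f(\bm{F}\bm{\Lambda}_1\bm{F}^T+\bm{\Lambda}_2)\|_2\le\|f(\bm{F}\bm{\Lambda}_1\bm{F}^T)\|_2+\|f(\bm{\Lambda}_2)\|_2$, follows directly for the operator norm from scalar facts: $\|f(\bm{M})\|_2=f(\|\bm{M}\|_2)$, Weyl's inequality $\|\bm{B}+\bm{C}\|_2\le\|\bm{B}\|_2+\|\bm{C}\|_2$, and scalar subadditivity of $f$. So the fix is trivial, but the stated justification is wrong.

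By contrast, the paper obtains the same structural bound (Lemma~\ref{lemma:operator_norm_structural1} and Corollary~\ref{corollary:operator_norm_structural2}) via a different splitting device: it applies a Rotfeld-type inequality due to Lee, $\big\|f\big(\begin{smallmatrix}\bm{B}&\bm{X}\\\bm{X}^T&\bm{C}\end{smallmatrix}\big)\big\|\le\|f(\bm{B})\|+\|f(\bm{C})\|$ for concave $f\ge 0$, directly to the $2\times 2$ block structure of $\bm{\Lambda}^{1/2}(\bm{I}-\bm{P}_{\bm Z})\bm{\Lambda}^{1/2}$, together with the inequalities in~\eqref{eq:projector_ineq}. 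This yields the bound for \emph{any} unitarily invariant norm, whereas your route---once patched---is specific to $\|\cdot\|_2$. Your eigenvalue-dominance argument is more elementary (no external Rotfeld lemma) and gives a tighter intermediate object $\bm{F}\bm{\Lambda}_1\bm{F}^T+\bm{\Lambda}_2$; the paper's approach buys generality across norms.
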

For conciseness we only state an expectation bound. From the proof of Theorem~\ref{theorem:operator_norm_expectation}, it follows that a deviation bound can be obtained from a deviation bound on the quantity $\|\bm{\Lambda}_2^{1/2}\bm{\Omega}_2\bm{\Omega}_1^{\dagger}\|_2$; see, e.g.,~\cite[Theorem 10.8]{rsvd} for such a bound. 


\subsubsection{Structural bound}

We first state a structural bound that holds in any unitarily invariant norm $\|\cdot\|$. 
The proof of this lemma is included in Appendix~\ref{appendix:operator_norm}.
\begin{lemma}\label{lemma:operator_norm_structural1}
Under Setting~\ref{setting}, assume that $\rank(\bm{\Omega}_1) = k$ and let $\bm{F} = \bm{\Lambda}_2^{q-1/2} \bm{\Omega}_2 \bm{\Omega}_1^{\dagger} \bm{\Lambda}_1^{-(q-1/2)}$.
Then
\begin{equation*}
    \|f(\bm{A})-f(\widehat{\bm{A}}_{q,k+p})\| \leq \|f(\bm{\Lambda}_2)\| + \|f(\bm{\Lambda}_1^{1/2} \bm{F}^T \bm{F} \bm{\Lambda}_1^{1/2})\|.
\end{equation*}
\end{lemma}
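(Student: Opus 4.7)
The plan is to reduce $\|f(\bm A)-f(\widehat{\bm A}_{q,k+p})\|$ to the norm of a matrix function of a single SPSD matrix---a projected version of $\bm{\Lambda}$---and then split that matrix as a sum of two pieces to which the subadditivity of concave matrix functions in unitarily invariant norms can be applied.

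First, I would use that~\eqref{eq:approxfrombelow} gives $\bm A \succeq \widehat{\bm A}_{q,k+p}$, so Ando's inequality (Lemma~\ref{lemma:loewner_trace}~(iii)) yields
\[
\|f(\bm A)-f(\widehat{\bm A}_{q,k+p})\| \leq \|f(\bm A-\widehat{\bm A}_{q,k+p})\|.
\]
Setting $\bm Y = \bm A^{q-1/2}\bm\Omega$ and $\widetilde{\bm Y} = \bm U^T \bm Y$, the identity $\bm U^T \bm P_{\bm Y} \bm U = \bm P_{\widetilde{\bm Y}}$ together with~\eqref{eq:nystromproj} and the unitary invariance of $\|\cdot\|$ and $f(\cdot)$ rewrite the right-hand side as $\|f(\bm\Lambda^{1/2}(\bm I-\bm P_{\widetilde{\bm Y}})\bm\Lambda^{1/2})\|$.

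Next, following the proof of Lemma~\ref{lemma:function_opmon_structural}, I would introduce $\bm Z = \widetilde{\bm Y}\bm\Omega_1^{\dagger}\bm\Lambda_1^{-(q-1/2)} = \begin{bmatrix}\bm I\\\bm F\end{bmatrix}$. Because $\rank(\bm\Omega_1)=k$, $\range(\bm Z)\subseteq \range(\widetilde{\bm Y})$, so $\bm P_{\widetilde{\bm Y}}\succeq \bm P_{\bm Z}$, and Lemma~\ref{lemma:loewner_trace}~(ii) reduces the problem to bounding $\|f(\bm\Lambda^{1/2}(\bm I-\bm P_{\bm Z})\bm\Lambda^{1/2})\|$. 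The orthogonal complement of $\range(\bm Z)$ is spanned by $\begin{bmatrix}-\bm F^T\\ \bm I\end{bmatrix}$, which yields the explicit representation
\[
\bm\Lambda^{1/2}(\bm I-\bm P_{\bm Z})\bm\Lambda^{1/2} = \bm\Psi(\bm I+\bm F\bm F^T)^{-1}\bm\Psi^T, \qquad \bm\Psi=\begin{bmatrix}-\bm\Lambda_1^{1/2}\bm F^T\\ \bm\Lambda_2^{1/2}\end{bmatrix}.
\]
Since $(\bm I+\bm F\bm F^T)^{-1}\preceq \bm I$ implies $\bm\Psi(\bm I+\bm F\bm F^T)^{-1}\bm\Psi^T \preceq \bm\Psi\bm\Psi^T$, and $\bm\Psi\bm\Psi^T$ has the same non-zero eigenvalues as $\bm\Psi^T\bm\Psi=\bm F\bm\Lambda_1\bm F^T+\bm\Lambda_2$, another application of Lemma~\ref{lemma:loewner_trace}~(ii) (combined with $f(0)=0$ to discard trailing zero eigenvalues) produces
\[
\|f(\bm\Lambda^{1/2}(\bm I-\bm P_{\bm Z})\bm\Lambda^{1/2})\| \leq \|f(\bm F\bm\Lambda_1\bm F^T+\bm\Lambda_2)\|.
\]

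Finally, I would exploit that the operator monotone function $f$ with $f(0)=0$ is operator concave (hence in particular concave and vanishing at $0$) to invoke Rotfel'd's subadditivity inequality $\|f(\bm X+\bm Y)\|\le \|f(\bm X)\|+\|f(\bm Y)\|$ for SPSD $\bm X,\bm Y$. Applying it to $\bm X=\bm F\bm\Lambda_1\bm F^T$ and $\bm Y=\bm\Lambda_2$, and then using $f(0)=0$ once more to swap $\bm F\bm\Lambda_1\bm F^T$ for $\bm\Lambda_1^{1/2}\bm F^T\bm F\bm\Lambda_1^{1/2}$ (identical non-zero eigenvalues), the chain of inequalities collapses to the claimed bound. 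The main obstacle will be pinpointing and correctly citing Rotfel'd's subadditivity inequality for a general unitarily invariant norm; the eigenvalue bookkeeping and the manipulations of the complementary projector $\bm I-\bm P_{\bm Z}$ should go through routinely once that ingredient is available.
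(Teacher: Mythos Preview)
Your proposal is correct and follows essentially the same blueprint as the paper's proof: both start with Ando's inequality (Lemma~\ref{lemma:loewner_trace}~(iii)), pass to the eigenvalue frame, replace $\bm P_{\widetilde{\bm Y}}$ by $\bm P_{\bm Z}$ via Lemma~\ref{lemma:loewner_trace}~(ii), and finish with a Rotfel'd-type subadditivity inequality for concave $f$.

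The one point of divergence is how the Rotfel'd step is executed. The paper applies the \emph{block} version (Lemma~\ref{lemma:rotfeld}) directly to the $2\times 2$ block structure of $\bm\Lambda^{1/2}(\bm I-\bm P_{\bm Z})\bm\Lambda^{1/2}$ coming from~\eqref{eq:projector_ineq}, and then bounds the two diagonal blocks separately using $\bm I-(\bm I+\bm F^T\bm F)^{-1}\preceq \bm F^T\bm F$ and $\bm I-\bm F(\bm I+\bm F^T\bm F)^{-1}\bm F^T\preceq \bm I$. You instead write $\bm I-\bm P_{\bm Z}$ via the complementary projector onto $\range\!\begin{bmatrix}-\bm F^T\\\bm I\end{bmatrix}$, bound $\bm\Psi(\bm I+\bm F\bm F^T)^{-1}\bm\Psi^T\preceq \bm\Psi\bm\Psi^T$ in one shot, pass to $\bm\Psi^T\bm\Psi=\bm F\bm\Lambda_1\bm F^T+\bm\Lambda_2$, and invoke the \emph{additive} Rotfel'd inequality $\|f(\bm X+\bm Y)\|\le\|f(\bm X)\|+\|f(\bm Y)\|$ for SPSD $\bm X,\bm Y$. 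Your route is slightly cleaner in that it avoids the explicit block calculus; on the other hand, the additive form you cite is itself typically derived from the block form (e.g.\ by applying Lemma~\ref{lemma:rotfeld} to $\begin{bmatrix}\bm X^{1/2}\\\bm Y^{1/2}\end{bmatrix}\begin{bmatrix}\bm X^{1/2}&\bm Y^{1/2}\end{bmatrix}$, which has the same nonzero eigenvalues as $\bm X+\bm Y$), so the two arguments are really two sides of the same coin. Your bookkeeping around matrices of different sizes (using $f(0)=0$ to discard zero eigenvalues when swapping $\bm\Psi\bm\Psi^T\leftrightarrow\bm\Psi^T\bm\Psi$ and $\bm F\bm\Lambda_1\bm F^T\leftrightarrow\bm\Lambda_1^{1/2}\bm F^T\bm F\bm\Lambda_1^{1/2}$) is fine for any unitarily invariant norm.
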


The result of Lemma~\ref{lemma:operator_norm_structural1} simplifies for the operator norm $\|\cdot\|_2$ because
\[
 \|f(\bm{\Lambda}_1^{1/2} \bm{F}^T \bm{F} \bm{\Lambda}_1^{1/2})\|_2 = 
 f(\|\bm{\Lambda}_1^{1/2} \bm{F}^T \bm{F} \bm{\Lambda}_1^{1/2}\|_2) \le 
 f\left(\gamma^{2(q-1)}\|\bm{\Lambda}_1^{1/2}\bm{\Omega}_2\bm{\Omega}_1^{\dagger}\|_2^2\right).
\]

\begin{corollary}\label{corollary:operator_norm_structural2}
Under Setting~\ref{setting} and assuming $\rank(\bm{\Omega}_1) = k$, one has
\begin{equation*}
    \|f(\bm{A})-f(\widehat{\bm{A}}_{q,k+p})\|_{2} \leq \|f(\bm{\Lambda}_2)\|_{2} + f\left(\gamma^{2(q-1)}\|\bm{\Lambda}_1^{1/2}\bm{\Omega}_2\bm{\Omega}_1^{\dagger}\|_2^2\right).
\end{equation*}
\end{corollary}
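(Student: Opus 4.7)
The plan is to take Lemma~\ref{lemma:operator_norm_structural1} in the operator norm and simplify the second summand $\|f(\bm{\Lambda}_1^{1/2} \bm{F}^T \bm{F} \bm{\Lambda}_1^{1/2})\|_2$, where $\bm{F} = \bm{\Lambda}_2^{q-1/2} \bm{\Omega}_2 \bm{\Omega}_1^{\dagger} \bm{\Lambda}_1^{-(q-1/2)}$. The first summand $\|f(\bm{\Lambda}_2)\|_2$ is already in the desired form, so the whole proof reduces to reshaping the second summand until $\gamma^{2(q-1)}$ and a Gaussian factor of the form $\|\bm{\Lambda}_2^{1/2}\bm{\Omega}_2\bm{\Omega}_1^{\dagger}\|_2^2$ are exposed.

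The first step is the scalar identity
\[
    \|f(\bm{M})\|_2 = f(\|\bm{M}\|_2)
\]
valid for any SPSD matrix $\bm{M}$: from its spectral decomposition $\bm{M} = \bm{V}\diag(\sigma_1,\ldots,\sigma_n)\bm{V}^T$ with $\sigma_1 \geq \cdots \geq \sigma_n \geq 0$, one has $f(\bm{M}) = \bm{V}\diag(f(\sigma_1),\ldots,f(\sigma_n))\bm{V}^T$, and the monotonicity and non-negativity of $f$ force the largest diagonal entry to be $f(\sigma_1) = f(\|\bm{M}\|_2)$. Applying this with $\bm{M} = \bm{\Lambda}_1^{1/2}\bm{F}^T\bm{F}\bm{\Lambda}_1^{1/2}$, together with the standard identity $\|\bm{B}^T\bm{B}\|_2 = \|\bm{B}\|_2^2$ applied to $\bm{B} = \bm{F}\bm{\Lambda}_1^{1/2}$, yields
\[
    \|f(\bm{\Lambda}_1^{1/2}\bm{F}^T\bm{F}\bm{\Lambda}_1^{1/2})\|_2 = f\bigl(\|\bm{F}\bm{\Lambda}_1^{1/2}\|_2^2\bigr).
\]

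What remains is to bound $\|\bm{F}\bm{\Lambda}_1^{1/2}\|_2$. Substituting the definition of $\bm{F}$, one power of $\bm{\Lambda}_1^{-1/2}$ cancels on the right and I would regroup the exponents as
\[
    \bm{F}\bm{\Lambda}_1^{1/2} = \bm{\Lambda}_2^{q-1}\bigl(\bm{\Lambda}_2^{1/2}\bm{\Omega}_2\bm{\Omega}_1^{\dagger}\bigr)\bm{\Lambda}_1^{-(q-1)}.
\]
Submultiplicativity of $\|\cdot\|_2$, combined with the diagonal identities $\|\bm{\Lambda}_2^{q-1}\|_2 = \lambda_{k+1}^{q-1}$ and $\|\bm{\Lambda}_1^{-(q-1)}\|_2 = \lambda_k^{-(q-1)}$ (well-defined thanks to $\lambda_k>0$), gives $\|\bm{F}\bm{\Lambda}_1^{1/2}\|_2 \leq \gamma^{q-1}\|\bm{\Lambda}_2^{1/2}\bm{\Omega}_2\bm{\Omega}_1^{\dagger}\|_2$. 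Squaring and invoking the monotonicity of $f$ one more time produces the claimed inequality (with the Gaussian factor in the dimensionally consistent form $\bm{\Lambda}_2^{1/2}\bm{\Omega}_2\bm{\Omega}_1^{\dagger}$). There is no genuine obstacle here: the argument is essentially bookkeeping with the exponents of diagonal blocks once the identity $\|f(\bm{M})\|_2 = f(\|\bm{M}\|_2)$ has been recognised, and both ingredients needed for that identity (monotonicity of $f$ and $f(0)=0$, so $f\geq 0$ on the spectrum of SPSD matrices) are already part of Setting~\ref{setting}.
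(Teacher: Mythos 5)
Your proposal is correct and follows essentially the same route as the paper: it takes Lemma~\ref{lemma:operator_norm_structural1}, uses $\|f(\bm{M})\|_2 = f(\|\bm{M}\|_2)$ for SPSD $\bm{M}$ together with $\|\bm{F}\bm{\Lambda}_1^{1/2}\|_2^2 = \|\bm{\Lambda}_1^{1/2}\bm{F}^T\bm{F}\bm{\Lambda}_1^{1/2}\|_2$, and then bounds $\|\bm{F}\bm{\Lambda}_1^{1/2}\|_2 \le \gamma^{q-1}\|\bm{\Lambda}_2^{1/2}\bm{\Omega}_2\bm{\Omega}_1^{\dagger}\|_2$ by submultiplicativity, exactly as the paper does in its one-line derivation. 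You are also right that the factor should read $\bm{\Lambda}_2^{1/2}\bm{\Omega}_2\bm{\Omega}_1^{\dagger}$ rather than the dimensionally inconsistent $\bm{\Lambda}_1^{1/2}\bm{\Omega}_2\bm{\Omega}_1^{\dagger}$ appearing in the stated corollary; this is a typo in the paper, as confirmed by the subsequent proof of Theorem~\ref{theorem:operator_norm_expectation}, which uses $\bm{\Lambda}_2^{1/2}$.
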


\subsubsection{Probabilistic bounds}

In order to obtain the expectation bound of Theorem~\ref{theorem:operator_norm_expectation} from Corollary~\ref{corollary:operator_norm_structural2} we make use of the following result established in~\cite[Appendix B]{frangella2021randomized}. 
\begin{lemma}\label{lemma:probabilistic3}
Let $\bm{\Omega}_1 \in \mathbb{R}^{k \times (k+p)}$ and $\bm{\Omega}_2 \in \mathbb{R}^{(n-k) \times (k+p)}$ be independent Gaussian matrices. If $\bm{D}$ is a matrix, then
\begin{equation}\label{eq:expectation3}
    \mathbb{E}\|\bm{D}\bm{\Omega}_2\bm{\Omega}_1^{\dagger}\|_2^2 \leq \frac{2k}{p-1}\|\bm{D}\|_2^2 + \frac{2e^2(k+p)}{p^2-1} \|\bm{D}\|_F^2
\end{equation}
\end{lemma}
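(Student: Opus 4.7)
The plan is to prove this by conditioning on $\bm{\Omega}_1$, reducing the inner expectation to a Gaussian second-moment bound for matrices of the form $\bm{A}\bm{G}\bm{B}$, and then integrating standard moment identities for the pseudoinverse of a Gaussian matrix. First, write the thin SVD of the pseudoinverse as $\bm{\Omega}_1^{\dagger} = \bm{U}\bm{\Sigma}\bm{V}^T$, where $\bm{U}\in\mathbb{R}^{(k+p)\times k}$ has orthonormal columns. Because $\bm{\Omega}_2$ is independent of $\bm{\Omega}_1$ and Gaussian matrices are rotationally invariant, conditional on $\bm{\Omega}_1$ the matrix $\bm{G}:=\bm{\Omega}_2\bm{U}$ has the same distribution as a standard $(n-k)\times k$ Gaussian matrix. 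Using unitary invariance of the operator norm,
\begin{equation*}
 \bigl\|\bm{D}\bm{\Omega}_2\bm{\Omega}_1^{\dagger}\bigr\|_2^2 \stackrel{d}{=} \bigl\|\bm{D}\bm{G}\bm{\Sigma}\bigr\|_2^2 \quad \text{given } \bm{\Omega}_1,
\end{equation*}
and note that the singular values of $\bm{\Sigma}$ coincide with those of $\bm{\Omega}_1^{\dagger}$, so $\|\bm{\Sigma}\|_F = \|\bm{\Omega}_1^{\dagger}\|_F$ and $\|\bm{\Sigma}\|_2 = \|\bm{\Omega}_1^{\dagger}\|_2$.

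Second, I would establish the deterministic Gaussian second-moment inequality
\begin{equation*}
 \mathbb{E}_{\bm{G}}\bigl\|\bm{A}\bm{G}\bm{B}\bigr\|_2^2 \;\leq\; 2\,\|\bm{A}\|_2^2\,\|\bm{B}\|_F^2 \;+\; 2\,\|\bm{A}\|_F^2\,\|\bm{B}\|_2^2
\end{equation*}
for deterministic conformable $\bm{A},\bm{B}$ and a standard Gaussian $\bm{G}$. A natural route is to combine Chevet/Gordon's inequality $\mathbb{E}\|\bm{A}\bm{G}\bm{B}\|_2\le \|\bm{A}\|_2\|\bm{B}\|_F + \|\bm{A}\|_F\|\bm{B}\|_2$ with Gaussian concentration: the map $\bm{G}\mapsto \|\bm{A}\bm{G}\bm{B}\|_2$ is Lipschitz with constant $\|\bm{A}\|_2\|\bm{B}\|_2$, so the Gaussian Poincaré inequality bounds $\mathrm{Var}(\|\bm{A}\bm{G}\bm{B}\|_2)\le \|\bm{A}\|_2^2\|\bm{B}\|_2^2$; combining via $\mathbb{E} X^2 = \mathrm{Var}(X)+(\mathbb{E} X)^2$ and absorbing the $\|\bm{A}\|_2^2\|\bm{B}\|_2^2$ contribution into the two cross terms (using $\|\cdot\|_2\le\|\cdot\|_F$) yields the stated constant $2$. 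Applied conditionally with $\bm{A}=\bm{D}$ and $\bm{B}=\bm{\Sigma}$, this gives
\begin{equation*}
 \mathbb{E}\bigl[\|\bm{D}\bm{\Omega}_2\bm{\Omega}_1^{\dagger}\|_2^2 \,\big|\, \bm{\Omega}_1\bigr] \;\leq\; 2\,\|\bm{D}\|_2^2\,\|\bm{\Omega}_1^{\dagger}\|_F^2 \;+\; 2\,\|\bm{D}\|_F^2\,\|\bm{\Omega}_1^{\dagger}\|_2^2.
\end{equation*}

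Third, I would take expectation over $\bm{\Omega}_1$ using two classical moment identities for the pseudoinverse of a Gaussian matrix with $k$ rows and $k+p$ columns: $\mathbb{E}\|\bm{\Omega}_1^{\dagger}\|_F^2 = \tfrac{k}{p-1}$ (from the trace of the inverse-Wishart distribution, valid for $p\ge 2$) and $\mathbb{E}\|\bm{\Omega}_1^{\dagger}\|_2^2 \le \tfrac{e^2(k+p)}{p^2-1}$ (obtained by integrating the sub-exponential tail of $1/\sigma_{\min}(\bm{\Omega}_1)$, as in the HMT-style analysis). Substituting these bounds produces precisely \eqref{eq:expectation3}.

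The main obstacle is the sharp Gaussian second-moment inequality in the second step: the combination of Chevet and Gaussian Poincaré above gives constants close to but not exactly $2$, and tightening them requires either carefully balancing the variance term against the cross terms or performing a direct moment calculation as in Appendix~B of \cite{frangella2021randomized}. If the concentration route proves cumbersome, the fallback is to expand $\|\bm{A}\bm{G}\bm{B}\|_2^2 = \|\bm{B}^T\bm{G}^T\bm{A}^T\bm{A}\bm{G}\bm{B}\|_2$ and compute Gaussian quadratic-form expectations entry-by-entry, bounding the resulting cross terms via $\|\cdot\|_2\le\|\cdot\|_F$; this is the approach taken in the cited reference, which our proof could invoke verbatim.
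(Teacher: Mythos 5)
The paper does not actually prove this lemma: it is imported verbatim from \cite[Appendix B]{frangella2021randomized}, so there is no internal proof to compare against. Your proposal reconstructs the argument behind that citation, and its architecture is correct and standard: condition on $\bm{\Omega}_1$, use rotational invariance to replace $\bm{\Omega}_2\bm{U}$ by a fresh $(n-k)\times k$ Gaussian matrix $\bm{G}$, bound the conditional second moment of $\|\bm{D}\bm{G}\bm{\Sigma}\|_2$, and then integrate $\mathbb{E}\|\bm{\Omega}_1^{\dagger}\|_F^2=\tfrac{k}{p-1}$ (inverse Wishart, $p\ge 2$) and $\mathbb{E}\|\bm{\Omega}_1^{\dagger}\|_2^2\le\tfrac{e^2(k+p)}{p^2-1}$ (integrating the HMT tail bound for $\|\bm{\Omega}_1^{\dagger}\|_2$). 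These last two facts reproduce exactly the constants appearing in \eqref{eq:expectation3}, so the only question is the conditional second-moment inequality with constant $2$.

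That is the step that does not go through as you wrote it. Setting $\alpha=\|\bm{A}\|_2\|\bm{B}\|_F$, $\beta=\|\bm{A}\|_F\|\bm{B}\|_2$ and $\gamma=\|\bm{A}\|_2\|\bm{B}\|_2$, the combination of Chevet's first-moment bound with the Gaussian Poincar\'e inequality gives $\mathbb{E}\|\bm{A}\bm{G}\bm{B}\|_2^2\le(\alpha+\beta)^2+\gamma^2$. Since the best available comparison is $\gamma^2\le\alpha\beta$, this is $\alpha^2+\beta^2+3\alpha\beta$ at best, which strictly exceeds $2\alpha^2+2\beta^2$ whenever $\alpha\approx\beta$; so no amount of ``absorbing the $\gamma^2$ term into the cross terms'' recovers the constant $2$ — you land at $\tfrac52(\alpha^2+\beta^2)$, or $3$ on one term and $2$ on the other. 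What does give the constant $2$ is an $L^2$-version of Chevet, $\bigl(\mathbb{E}\|\bm{A}\bm{G}\bm{B}\|_2^2\bigr)^{1/2}\le\alpha+\beta$, followed by $(\alpha+\beta)^2\le2\alpha^2+2\beta^2$; that $L^2$ bound is not a consequence of the first-moment-plus-variance argument and requires the direct moment computation carried out in \cite[Appendix B]{frangella2021randomized}. You diagnose this obstacle yourself and offer the fallback of invoking that computation, which closes the gap — and is, in effect, all the paper does. With that single step replaced, the rest of your argument is correct and yields precisely \eqref{eq:expectation3}.
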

\begin{proof}[Proof of Theorem~\ref{theorem:operator_norm_expectation}]
Using Corollary~\ref{corollary:operator_norm_structural2} and Jensen's inequality we obtain
\begin{align*}
    \mathbb{E}\|f(\bm{A})-f(\widehat{\bm{A}}_{q,k+p})\|_2 \leq & \|f(\bm{\Lambda}_2)\|_2 + \mathbb{E} f\left(\gamma^{2(q-1)}\|\bm{\Lambda}_2^{1/2}\bm{\Omega}_2\bm{\Omega}_1^{
    \dagger}\|_2^2\right)\\
    \leq & \|f(\bm{\Lambda}_2)\|_2 + f\left(\gamma^{2(q-1)}\mathbb{E}\|\bm{\Lambda}_2^{1/2}\bm{\Omega}_2\bm{\Omega}_1^{
    \dagger}\|_2^2\right).
\end{align*}
Bounding $\mathbb{E}\|\bm{\Lambda}_2^{1/2}\bm{\Omega}_2\bm{\Omega}_1^{\dagger}\|_2^2$ via \eqref{eq:expectation3} and using the subadditivity of $f$ on $[0,\infty)$ as well as the relations $\|\bm{\Lambda}_2^{1/2}\|_2^2 = \|\bm{\Lambda}_2\|_2$ and $\|\bm{\Lambda}_2^{1/2}\|_F^2 = \|\bm{\Lambda}_2\|_*$ we obtain
\begin{align*}
    &f\left(\gamma^{2(q-1)}\mathbb{E}\|\bm{\Lambda}_2^{1/2}\bm{\Omega}_2\bm{\Omega}_1^{
    \dagger}\|_2^2\right) \leq f\left(\gamma^{2(q-1)}\left(\frac{2k}{p-1}\|\bm{\Lambda}_2\|_2 + \frac{2e^2(k+p)}{p^2-1} \|\bm{\Lambda}_2\|_*\right)\right)\\
    \leq & f\left(\gamma^{2(q-1)}\frac{2k}{p-1}\|\bm{\Lambda}_2\|_2\right) + f\left(\gamma^{2(q-1)}\frac{2e^2(k+p)}{p^2-1} \|\bm{\Lambda}_2\|_*\right).
\end{align*}
Noting that $f\left(\gamma^{2(q-1)}\frac{2k}{p-1}\|\bm{\Lambda}_2\|_2\right) = \big\|f\left(\gamma^{2(q-1)}\frac{2k}{p-1}\bm{\Lambda}_2\right)\big\|_2$ and using once again the subadditivity of $f$ we have
\begin{equation*}
    f\left(\gamma^{2(q-1)}\frac{2e^2(k+p)}{p^2-1} \|\bm{\Lambda}_2\|_*\right) \leq \left\|f\left(\gamma^{2(q-1)} \frac{2e^2(k+p)}{p^2-1} \bm{\Lambda}_2\right)\right\|_*,
\end{equation*}
which completes the proof.
\end{proof}

\subsubsection{Improved bounds for the matrix square-root}

Once again improved results can be obtained in a relatively simple manner for $f(x) = \sqrt{x}$. Using Corollary~\ref{corollary:operator_norm_structural2} we have
\begin{equation*}
    \|\bm{A}^{1/2} - \widehat{\bm{A}}_{q,k+p}^{1/2}\|_2 \leq \|\bm{\Lambda}_2^{1/2}\|_2 + \gamma^{q-1} \|\bm{\Lambda}_2^{1/2}\bm{\Omega}_2\bm{\Omega}_2^{\dagger}\|_2
\end{equation*}
This can be turned into an expectation bound by using
\begin{equation*}
    \mathbb{E}\|\bm{\Lambda}_2^{1/2}\bm{\Omega}_2\bm{\Omega}_1^{\dagger}\|_2 \leq \sqrt{\frac{k}{p-1}} \|\bm{\Lambda}_2^{1/2}\|_2 + \frac{e\sqrt{k+p}}{p} \|\bm{\Lambda}_2^{1/2}\|_F,
\end{equation*}
which holds for $k,p \geq 2$; see, e.g., the proof of \cite[Theorem 10.6]{rsvd}. 
\section{Numerical Experiments}

In this section, we verify the performance of funNyström numerically. All experiments have been performed in MATLAB (version 2020a) on a MacBook Pro with a 2.3 GHz Intel Core i7 processor with 4 cores. Scripts to reproduce all figures in this paper are available at \url{https://github.com/davpersson/funNystrom}. 

We compare the approximation $f(\widehat{\bm{A}}_{q,k})$ returned by funNyström with the following references. Applying Nyström directly to $f(\bm{A})$ with an $n\times k$ Gaussian random matrix $\bm{\Omega}$ yields the rank-$k$ approximation
\begin{equation} 
    \widehat{\bm{B}}_{q,k} = f(\bm{A})^q \bm{\Omega}(\bm{\Omega}^Tf(\bm{A})^{2q-1}\bm{\Omega})^{\dagger}(f(\bm{A})^q \bm{\Omega})^T. \label{eq:nystromf}
\end{equation}
This assumes that mvps with $f(\bm{A})$ are carried out \emph{exactly}. If each mvp with $f(\bm{A})$ needed in~\eqref{eq:nystromf} is approximated using $d$ iterations of the Lanczos method, one obtains a different approximation, which will be denoted by $\widehat{\bm{B}}^{(d)}_{q,k}$.

Given an approximation $\bm B$ of $f(\bm{A})$, we will measure the relative error
$\|f(\bm{A})-\bm{B}\| / \|f(\bm{A})\|$ for some norm $\|\cdot\|$. 

\subsection{Test matrices}
In the following, we describe the test matrices used in our experiments. 

\subsubsection{Synthetic matrices}

We consider synthetic matrices with prescribed algebraic and exponential eigenvalue decays. Let $\bm{\Lambda}_{\text{alg}}$ and $\bm{\Lambda}_{\text{exp}}$ be diagonal matrices with diagonal entries
\[
    (\bm{\Lambda}_{\text{alg}})_{ii} = si^{-c}, \quad 
    (\bm{\Lambda}_{\text{exp}})_{ii} = s \gamma^{i}, \quad
    i = 1,\ldots,n,
\]
for parameters $s,c > 0$ and $\gamma \in (0,1)$.
Letting $\bm{U}$ denote the orthogonal matrix generated by the MATLAB-command $\texttt{gallery('orthog',n,1)}$, we set
\begin{equation}
    \bm{A}_{\text{alg}} = \bm{U} \bm{\Lambda}_{\text{alg}}\bm{U}^T, \quad 
    \bm{A}_{\text{exp}} = \bm{U} \bm{\Lambda}_{\text{exp}}\bm{U}^T. \label{eq:exponential_decay}
\end{equation}
Unless specified otherwise, we choose $n = 5000$. 

\subsubsection{Gaussian process covariance kernels}

We consider two classes of matrices that arise from the discretization of the squared exponential and Matérn Gaussian process covariance kernels \cite{seeger2004gaussian}. For this purpose, we generate $n = 5000$ i.i.d. data points $x_1,\ldots,x_{5000} \sim N(0,1)$ and set
\begin{align}
    &\bm{A}_{\text{SE}} \in \mathbb{R}^{n \times n}, \quad (\bm{A}_{\text{SE}})_{ij} = \exp\left(-|x_i-x_j|^2/(2\sigma^2)\right), \label{eq:squared_exponential}\\
    &\textbf{A}_{\text{Mat}} \in \mathbb{R}^{n \times n}, \quad (\textbf{A}_{\text{Mat}})_{ij} = \frac{\pi^{1/2}\left(\alpha |x_i - x_j|\right)^{\nu} K_{\nu}(\alpha|x_i-x_j|)}{2^{\nu - 1}\Gamma(\nu + 1/2)) \alpha^{2\nu}}, \label{eq:matern}
\end{align}
for $i,j = 1,\ldots,n$, and parameters $\sigma, \alpha, \nu > 0$. Note that $K_{\nu}$ is the modified Bessel function of the second kind. Computing $\tr\left(\log(\bm{I} + \bm{A})\right)$, where $\bm{A}$ is a matrix arising from discretizing a covariance kernel, is an important task in Bayesian optimization and maximum likelihood estimation for Gaussian processes \cite{gardner2018gpytorch, wenger2021reducing}. 

\subsubsection{Bayesian inverse problem}

Motivated by the numerical experiments in \cite{alexanderian2014optimal,dudley2020monte,saibaba}, this test matrix arises from a Bayesian inverse problem. Consider the parabolic partial differential equation
\begin{align}
\begin{split}
    &u_t = \kappa \Delta u + \lambda u \text{ in } [0,1]^2 \times [0,2]\\
    &u(\cdot,0) = \theta \text{ in } \mathcal{D}\\
    &u = 0 \text{ on } \Gamma_1\\
    & \frac{\partial u}{\partial \bm{n}} = 0 \text{ on } \Gamma_2
\end{split}
\label{eq:pde}
\end{align}
for $\kappa, \lambda > 0$ and $\Gamma_2 = \{(x,1) \in \mathbb{R}^{2} : x \in [0,1] \}$ and $\Gamma_1 = \partial \mathcal{D} \setminus \Gamma_2$. We place 49 sensors  at $(i/8,j/8) \in [0,1]^2$ for $i,j = 1,\ldots,7$ to take measurements of $u$ at these sensor locations at times $t = 1, 1.5, 2$. We gather all $3\times 49 = 147$ measurements in a vector $\bm{d} \in \mathbb{R}^{147}$.

Discretizing~\eqref{eq:pde} in space using finite differences on $40 \times 40$ equispaced grid yields an ordinary differential equation of the form
\begin{align}
\begin{split}
    \dot{\bm{u}}(t) &= \bm{L} \bm{u}(t) \text{ for } t \in [0,2],\\
    \bm{u}(0) &= \bm{\theta}. \label{eq:ode}
\end{split}
\end{align}
The solution to \eqref{eq:ode} is $\bm{u}(t) = \exp(t\bm{L}) \bm{\theta}$. Let $\bm{u}_{\text{measure}}\in \mathbb{R}^{147}$ contain the values of $\bm{u}$ corresponding to sensor locations at times $t = 1, 1.5, 2$. Then, by linearity, we can write $\bm{u}_{\text{measure}} = \bm{F}\bm{\theta}$ for a matrix $\bm{F}$.  

Assume that $\bm{\theta} \sim N(\bm{\theta}_0,\bm{\Sigma}_0)$, the discretization error is negligible, and that the measurements $\bm{d}$ are distorted by some noise $\bm{\varepsilon} \sim N(\bm{0}, \bm{\Sigma}_{\text{noise}})$ so that
\begin{equation*}
    \bm{d} = \bm{u}_{\text{measure}} + \bm{\varepsilon}.
\end{equation*}
It is well known that the posterior distribution of $\bm{\theta}$ is given by
$\bm{\theta}|\bm{d} \sim N(\bm{\theta}_{\text{post}},\bm{\Sigma}_{\text{post}})$ with
\[
\bm{\theta}_{\text{post}} = \bm{\Sigma}_{\text{post}}(\bm{F}^T \bm{\Sigma}_{\text{noise}}^{-1}\bm{d} + \bm{\Sigma}_0^{-1} \bm{\theta}_0),\quad 
\bm{\Sigma}_{\text{post}} = (\bm{F}^T \bm{\Sigma}_{\text{noise}}^{-1}\bm{F} + \bm{\Sigma}_0^{-1})^{-1};
\]
see \cite{stuart}. Now let
\begin{equation}\label{eq:h0}
    \bm{A}_{\text{pde}} = \bm{\Sigma}_0^{1/2}\bm{F}^T \bm{\Sigma}_{\text{noise}}^{-1}\bm{F}\bm{\Sigma}_0^{1/2}.
\end{equation}
Then, $\tr(\log(\bm{I} + \bm{A}_{\text{pde}}))$ is related to the expected information gain from the posterior distribution relative to the prior distribution~\cite{alexanderian2014optimal}. For fine discretization grids, the matrix $\bm{F}$, and thus $\bm{A}_{\text{pde}}$, cannot be formed explicitly. Instead, one only implicitly performs mvps with $\bm{A}_{\text{pde}}$ via solving \eqref{eq:ode}. 

\subsection{Comparing number of mvps}\label{section:numerical_experiments_accuracy}

Recall that Algorithm~\ref{alg:funnystrom} requires $qk$ mvps with $\bm{A}$ to compute $f(\widehat{\bm{A}}_{q,k})$.
In contrast, the approximation $\widehat{\bm{B}}^{(d)}_{q,k}$ -- obtained via applying Nystr\"om to $f(\bm{A})$ --
requires $d q k$ mvps with $\bm{A}$. 
The choice of $d$, the number of Lanczos iterations, needs to be chosen in dependence of $q,k$ such that the impact on the overall accuracy remains negligible. For the purpose of our numerical comparison, we have precomputed the matrix $\widehat{\bm{B}}_{q,k}$ obtained without the additional Lanczos approximation and choose $d$ such that
\begin{equation}
    \|f(\bm{A})-\widehat{\bm{B}}^{(d)}_{q,k}\| \leq 1.1\cdot  \|f(\bm{A})-\widehat{\bm{B}}_{q,k}\|.\label{eq:condition}
\end{equation}
In practice, $\widehat{\bm{B}}_{q,k}$ is not available and one needs to employ heuristic and potentially less reliable criteria.
In our implementation we increase $d$ by 5 until \eqref{eq:condition} is satisfied. The results obtained for  $q = 1$ are reported in Figure~\ref{fig:savings}. Clearly, Algorithm~\ref{alg:funnystrom} needs fewer mvps; the difference can be up to three orders of magnitude.
\begin{figure}
\begin{subfigure}{.5\textwidth}
  \centering
  \includegraphics[width=\linewidth]{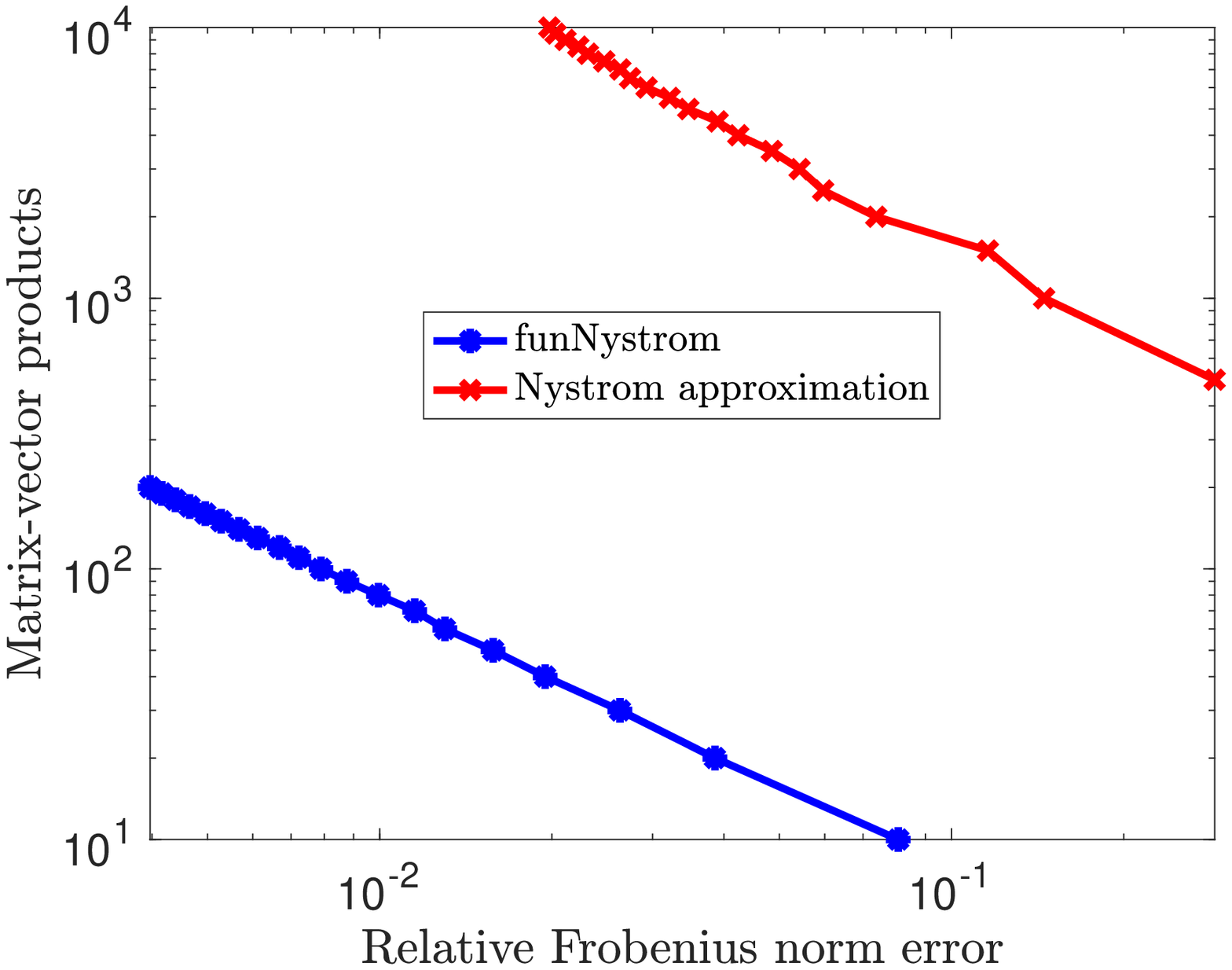}
    \caption{$\bm{A}_{\text{alg}}$ defined in \eqref{eq:exponential_decay} with $s=1$, $c = 3$ and $f(x) = x^{1/2}$.}
\end{subfigure}
\begin{subfigure}{.5\textwidth}
  \centering
  \includegraphics[width=\linewidth]{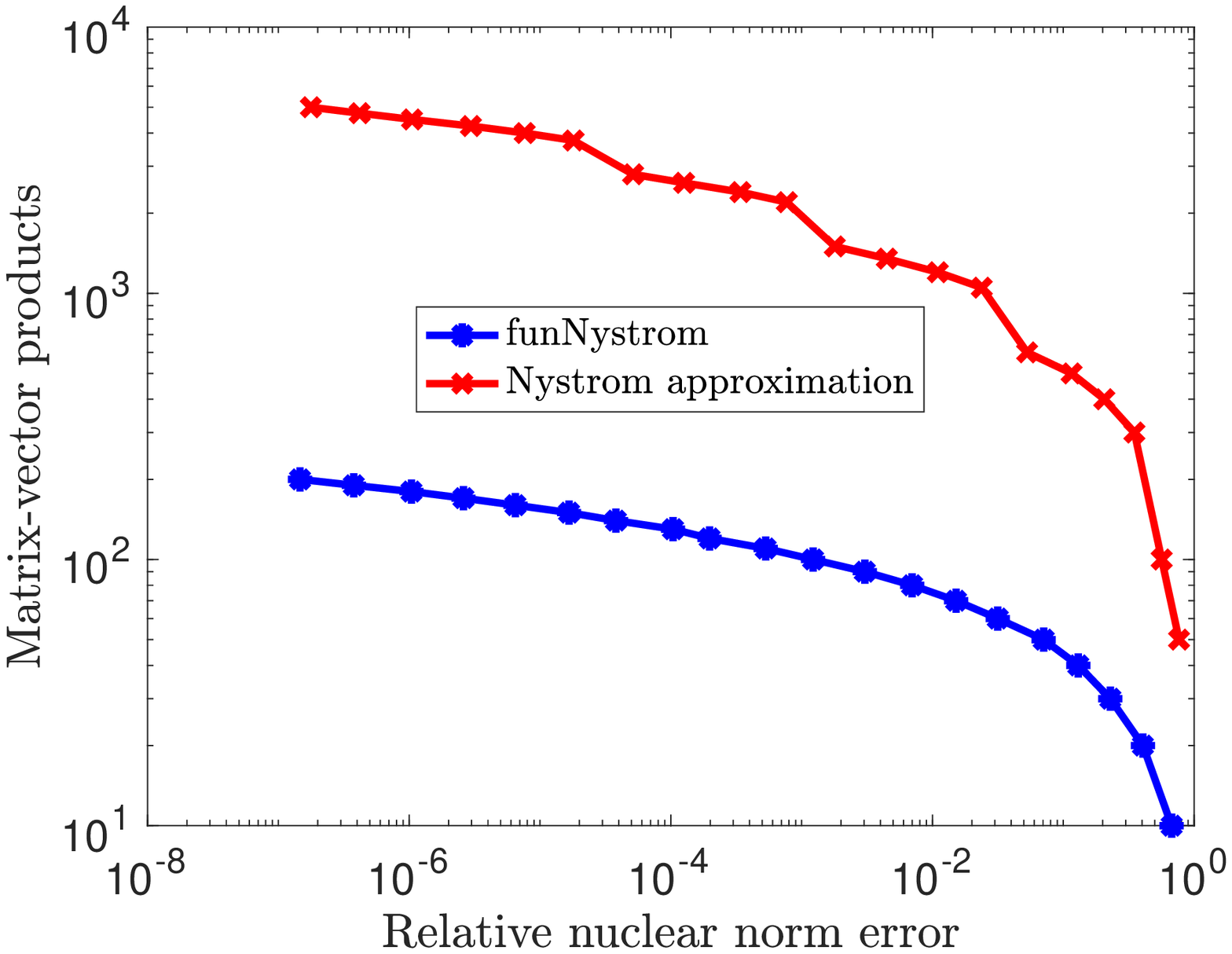}  
\caption{$\bm{A}_{\text{exp}}$ defined in \eqref{eq:exponential_decay} with $s = 10, \gamma = e^{-1/10}$ and $f(x) = \frac{x}{x+1}$.}
\end{subfigure}
\begin{subfigure}{.5\textwidth}
  \centering
  \includegraphics[width=\linewidth]{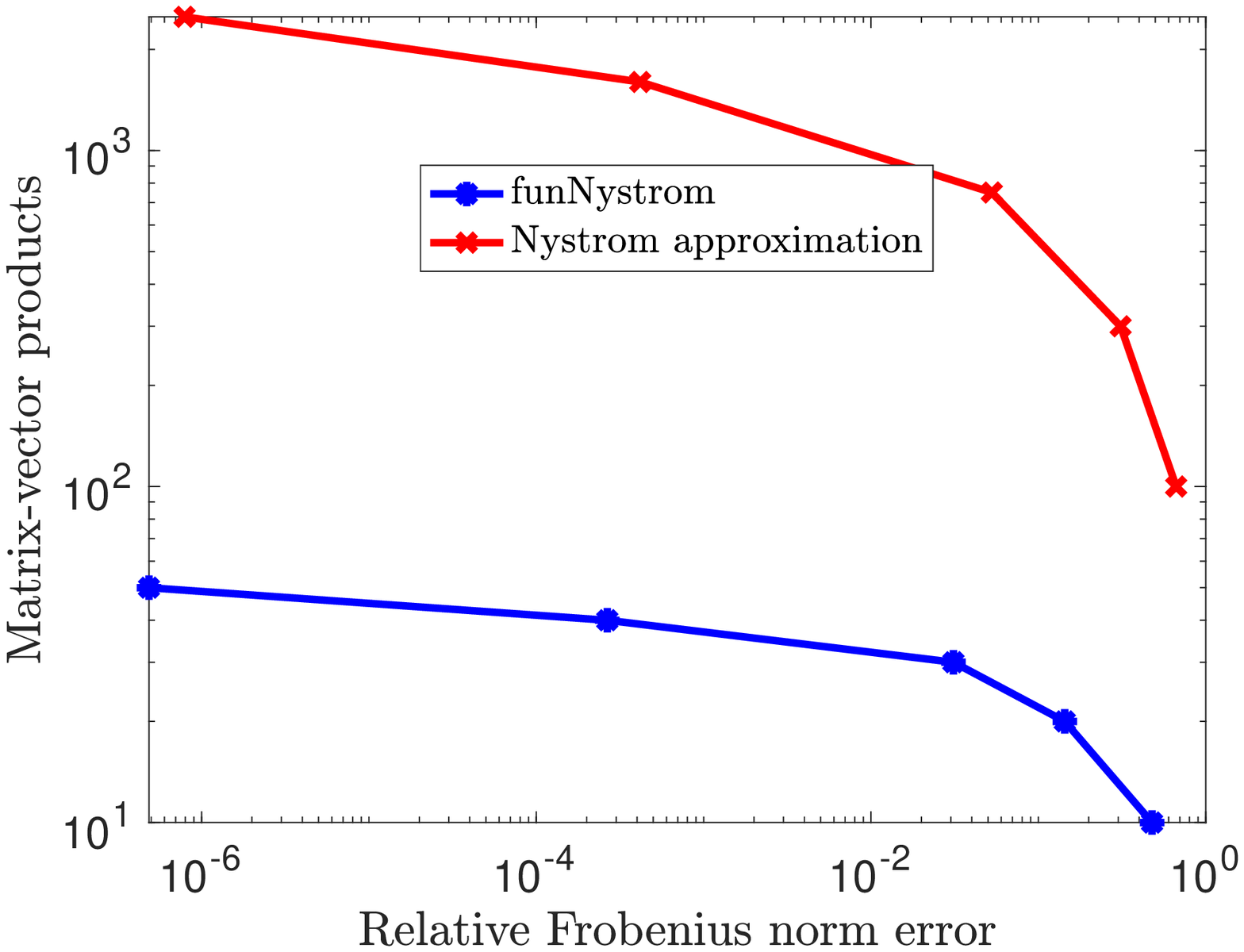}  
  \caption{$\bm{A}_{\text{SE}}$ defined in \eqref{eq:squared_exponential} with $\sigma^2 = 0.1$ and $f(x) = \log(1+x)$.}
\end{subfigure}
\begin{subfigure}{.5\textwidth}
  \centering
  \includegraphics[width=\linewidth]{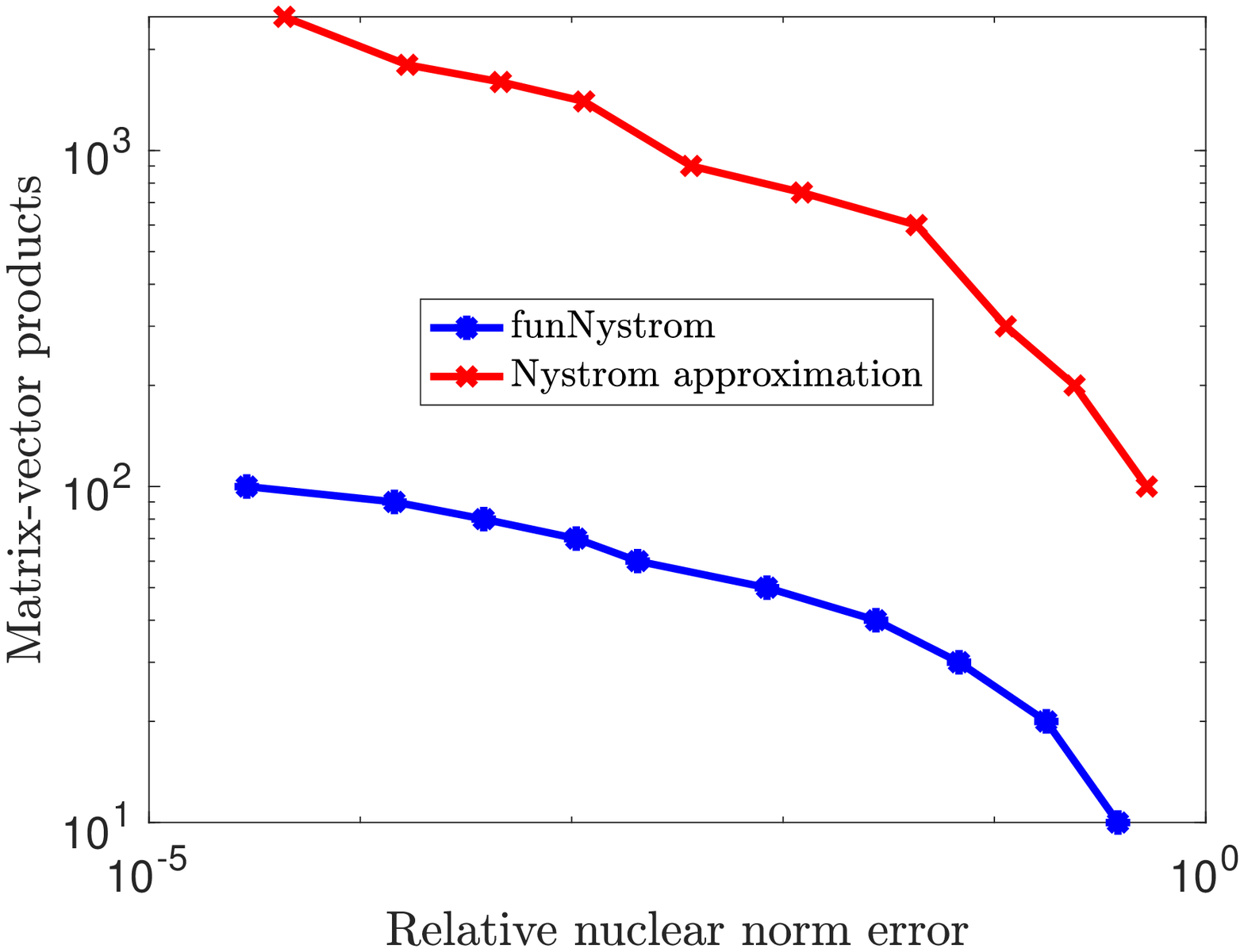}  
  \caption{$\bm{A}_{\text{pde}}$ defined in \eqref{eq:h0} with $\kappa = 0.01, \lambda = 1$, $\bm{\Sigma}_{\text{noise}} = \bm{I}$ and $f(x) = \log(1+x)$. }
\end{subfigure}
\caption{Number of mvps with $\bm{A}$ vs. attained accuracy for low-rank approximations of $f(\bm{A})$ from Algorithm~\ref{alg:funnystrom} with $q=1$ (blue) and $\widehat{\bm{B}}^{(d)}_{q,k}$ (red).}
\label{fig:savings}
\end{figure}%

\subsection{Comparing accuracy}

In Figure~\ref{fig:exact}, we compare the approximation error of Algorithm~\ref{alg:funnystrom} with the (significantly more expensive) approximation $\widehat{\bm{B}}_{q,k}$. It can be observed  that Algorithm~\ref{alg:funnystrom} is never worse than $\widehat{\bm{B}}_{q,k}$, and sometimes even better.
This suggests that even when mvps with $f(\bm{A})$ can be performed very efficiently, Algorithm~\ref{alg:funnystrom} may still be the preferred choice. In the figures we also plot the expectation error bounds from Section~\ref{section:frosqrtm} and Theorem~\ref{theorem:nuclear_opmon_expectation} for choices of $k$ and $p$ that minimize the right-hand side of the error bound. For $q = 1$ our bounds recover the empirical error up to a factor less than 10. For $q = 2$ the performance of Algorithm~\ref{alg:funnystrom} is sometimes significantly better than our bounds predict, suggesting that there is room to tighten the bounds for $q \geq 2$.

\begin{figure}
\begin{subfigure}{.5\textwidth}
  \centering
  \includegraphics[width=\linewidth]{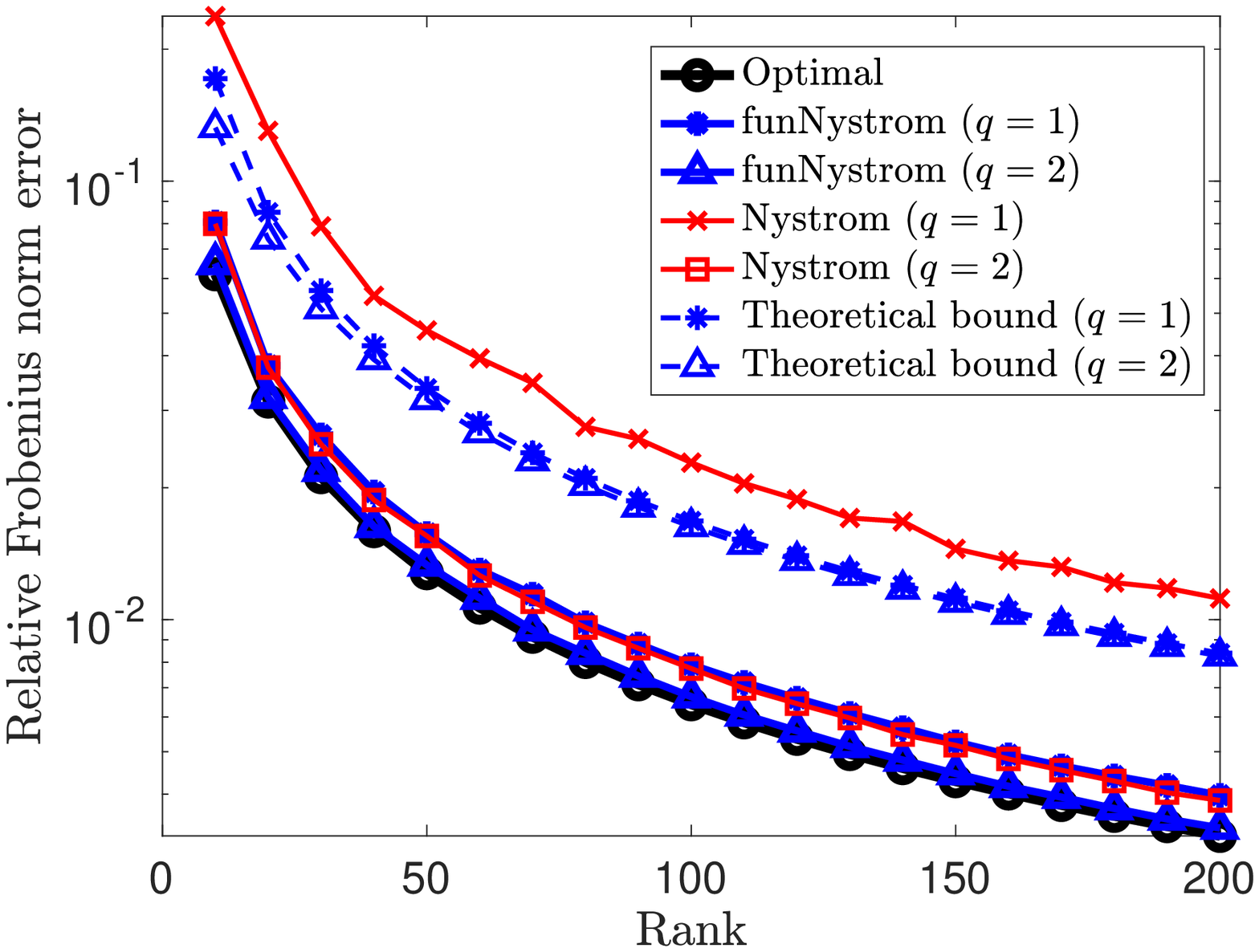}  
  \caption{$\bm{A}_{\text{alg}}$ defined in \eqref{eq:exponential_decay} with $s = 1$, $c = 3$ and $f(x) = x^{1/2}$.}
\end{subfigure}
\begin{subfigure}{.5\textwidth}
  \centering
  \includegraphics[width=\linewidth]{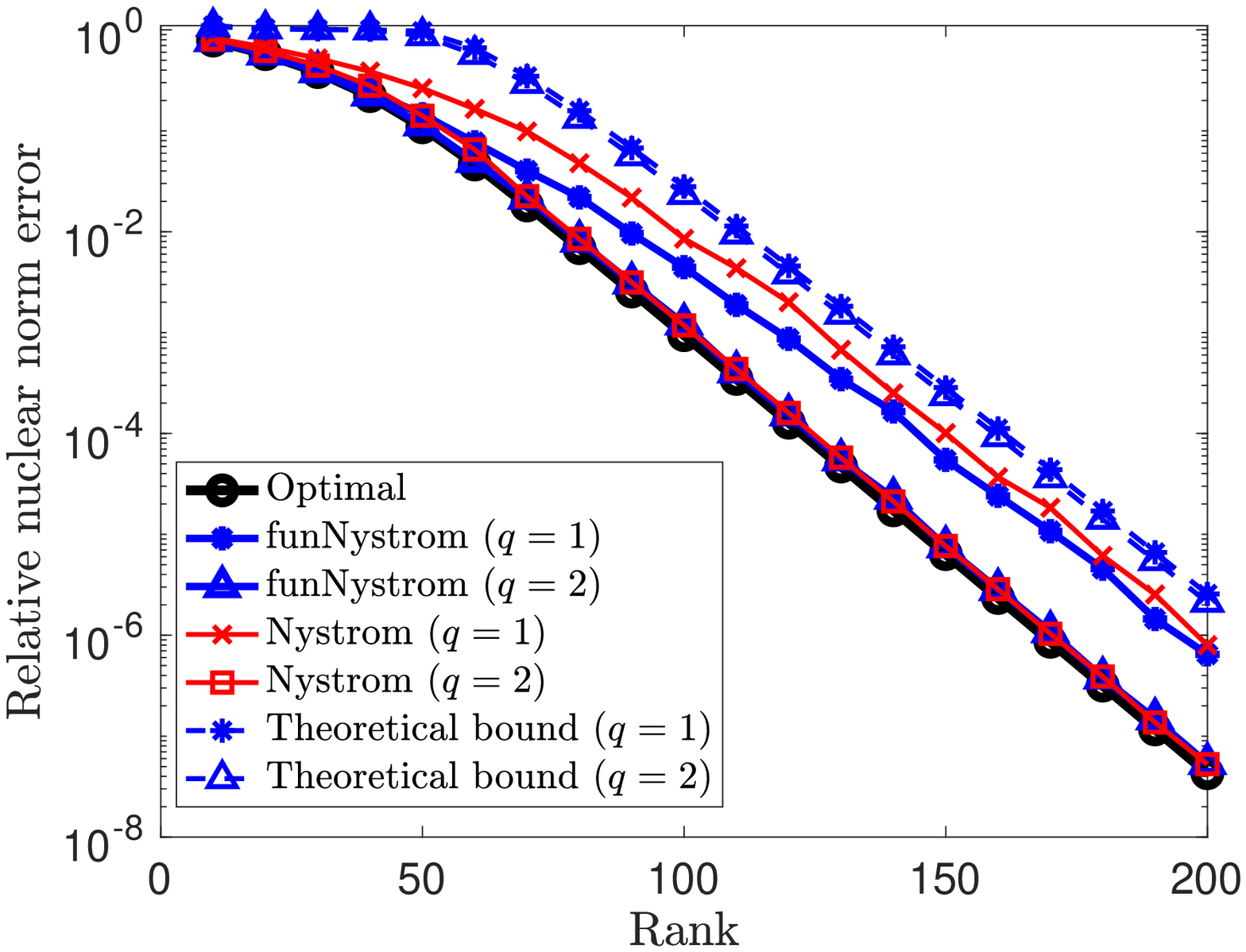}  
  \caption{$\bm{A}_{\text{exp}}$ defined in \eqref{eq:exponential_decay} with $s = 1$, $\gamma = e^{-\frac{1}{10}}$ and $f(x) = \frac{x}{x+0.01}$.}
\end{subfigure}
\begin{subfigure}{.5\textwidth}
  \centering
  \includegraphics[width=\linewidth]{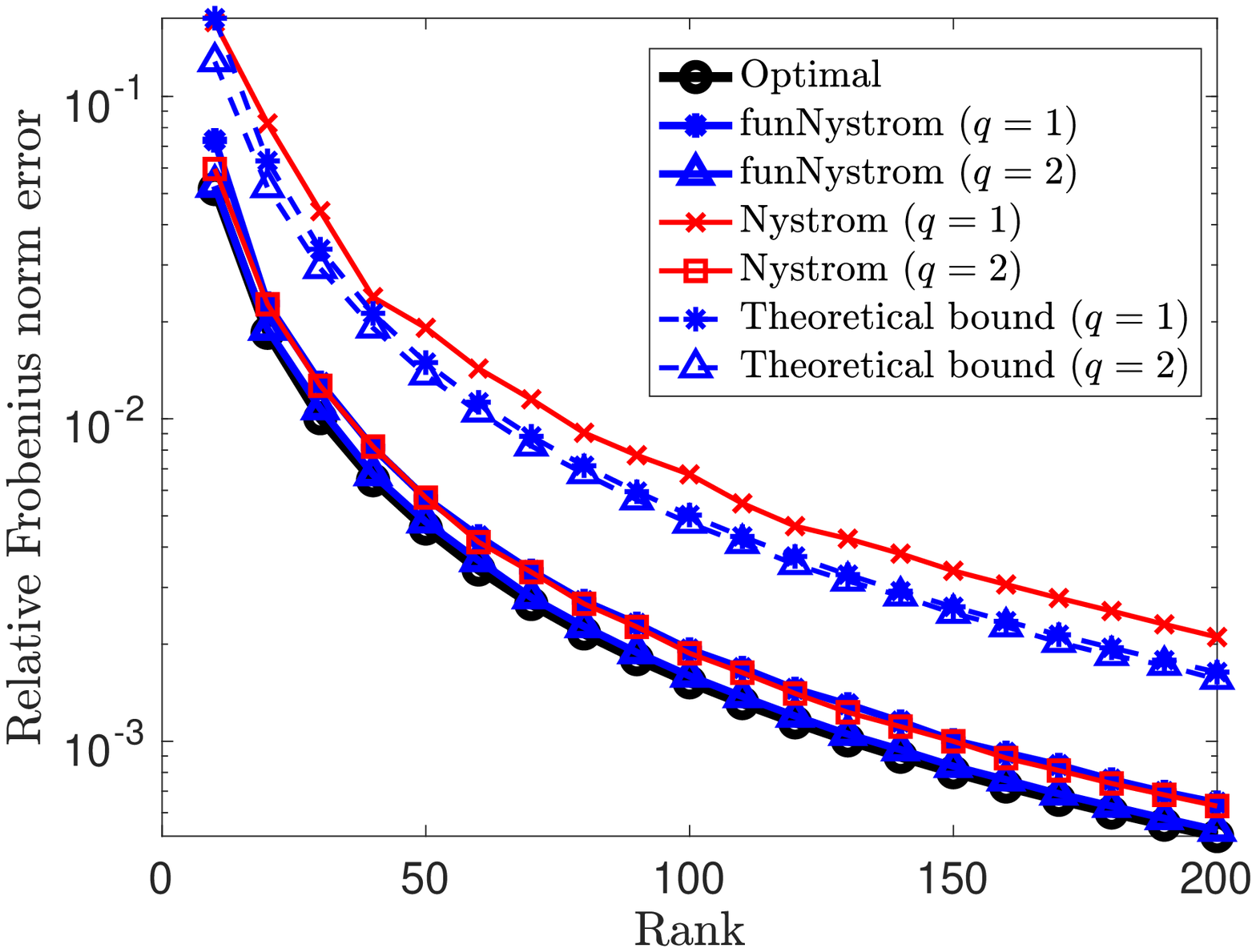}  
  \caption{$\bm{A}_{\text{Mat}}$ defined in \eqref{eq:matern} with $\alpha = 1$, $\nu = 3/2$ and $f(x) = x^{1/2}$.}
\end{subfigure}
\begin{subfigure}{.5\textwidth}
  \centering
  \includegraphics[width=\linewidth]{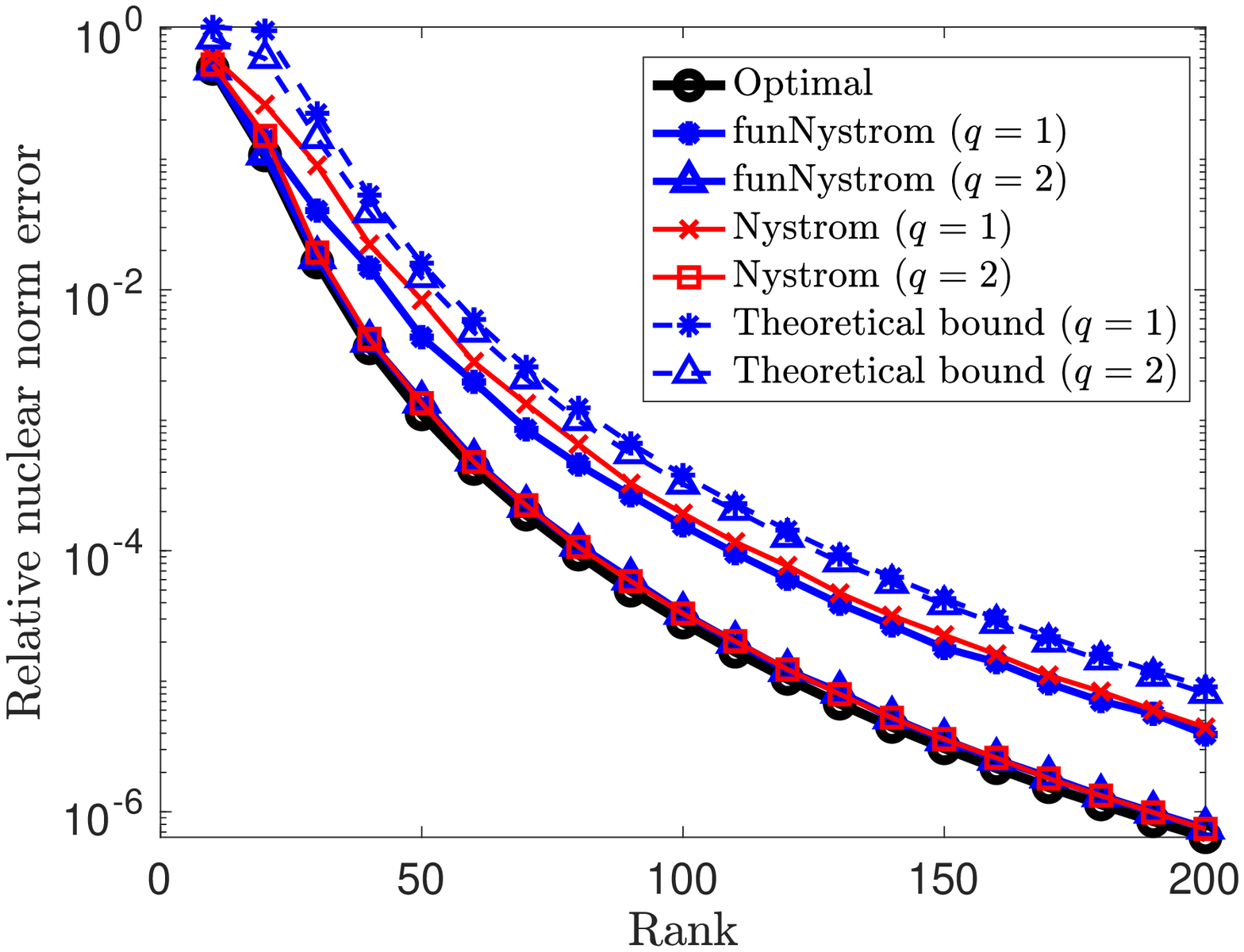}  
  \caption{$\bm{A}_{\text{Mat}}$ defined in \eqref{eq:matern} with $\alpha = 1$, $\nu = 5/2$ and $f(x) = \frac{x}{x+0.01}$.}
\end{subfigure}
\caption{Error vs. prescribed rank of the approximation for Algorithm~\ref{alg:funnystrom} applied to $\bm{A}$ (blue) and $\widehat{\bm{B}}_{q,k}$, the Nyström approximation applied to $f(\bm{A})$ (red), for $q =1,2$. }
\label{fig:exact}
\end{figure}

\subsection{Fast computation of mvps}


In this section we show that Algorithm~\ref{alg:funnystrom} can be used to compute fast mvps with $f(\bm{A})$. We let $f(x) = x^{1/2}$ and $\bm{A} = \bm{A}_{\text{exp}}$ defined in \eqref{eq:exponential_decay} with $s = 1, \gamma = e^{-1}$ and $n = 10000$. We let $\bm{Z} \in \mathbb R^{n \times N}$ be the matrix containing the first $N$ columns of the identity matrix. Hence, computing $\bm{A}^{1/2}\bm{Z}$ requires $N$ mvps with $\bm{A}^{1/2}$. We compare the computation times of the following two methods for approximating $\bm{A}^{1/2}\bm{Z}$:
\begin{enumerate}
    \item Approximating $\bm{A}^{1/2}\bm{Z}$ using the Lanczos method with $p$ iterations. This comes at a computational cost of $O(pn^2 N)$. The implementation we use for the Lanczos method is the same implementation used for the numerical expriments in \cite{hutchpp}, which approximates the $N$ mvps with $\bm{A}^{1/2}$ simultaneously by vectorizing all computations, rather than approximating the $N$ mvps subsequently. This significantly speeds up the computation.
    \item Computing $\widehat{\bm{A}}_{q,k}^{1/2}$ using Algorithm~\ref{alg:funnystrom} and approximate $\widehat{\bm{A}}^{1/2}_{q,k} \bm{Z} \approx \bm{A}^{1/2} \bm{Z}$. This comes at a computational cost of $O(qkn^2 + nkN)$.
\end{enumerate}
If we let $T_{\text{Lanczos}}(N)$ and $T_{\text{low-rank}}(N)$ be wall-clock time to approximate $N$ mvps with $\bm{A}^{1/2}$ using the Lanczos method and low-rank approximation respectively, the speed-up factor will be 
\begin{equation*}
    \frac{T_{\text{Lanczos}}(N)}{T_{\text{low-rank}}(N)} = O\left(\frac{pn^2N}{qkn^2 + nkN}\right) = O(N),
\end{equation*}
if we keep $p,q$ and $k$ constant and assume $N \ll n$.

In our numerical experiments we set $k = 14, q = 1, p = 21$. This choice of parameters yields a similar relative error $\|\bm{A}^{1/2}\bm{Z}-\bm{Y}\|_F/\|\bm{A}^{1/2}\bm{Z}\|_F \approx 0.01$ for both methods and for all $N$, where $\bm{Y}$ is the approximation to $\bm{A}^{1/2}\bm{Z}$. We set $N = 10, 20, \ldots, 100$. The results are presented in Figure~\ref{fig:comptimes}, which confirm the $O(N)$ speed-up factor.
\begin{figure}
    \centering
    \includegraphics[width=\linewidth]{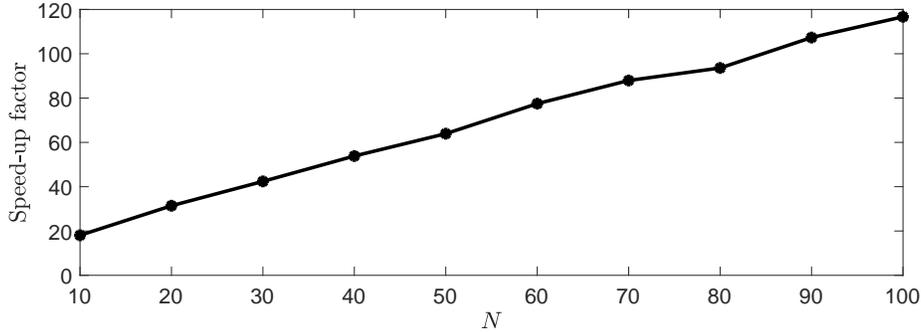}
    \caption{Number of mvps $N$ vs. speed-up factor $T_{\text{Lanczos}}(N)/T_{\text{low-rank}}(N)$.}
    \label{fig:comptimes}
\end{figure}

\section{Application to trace estimation}\label{section:trest}

In this section, we discuss how funNystr\"om can be used to approximate $\tr(f(\bm A))$, the trace of $f(\bm A)$, under Setting~\ref{setting}.

\subsection{Trace estimation via low-rank approximation}
When an $n\times n$ matrix $\bm{B}$ admits an excellent rank-$k$ approximation $\bm{B}_k$ for $k\ll n$, it is sensible to approximate $\tr(\bm{B})$ by $\tr(\bm{B}_k)$. Setting $\bm{B} = f(\bm{A})$, this motivates the approximation
\begin{equation} \label{eq:ourtrace}
 \tr(f(\bm{A}))\approx \tr(f(\widehat{\bm{A}}_{q,k+p})) = f(\hat \lambda_{1}) + \cdots + f(\hat \lambda_{k+p}),
\end{equation}
where $\widehat{\bm{A}}_{q,k+p} = \widehat{\bm{U}} \widehat{\bm{\Lambda}} \widehat{\bm{U}}^T$ denotes the output of Algorithm~\ref{alg:funnystrom} with $\widehat{\bm{\Lambda}} = \text{diag}(\hat \lambda_1,\ldots,\hat \lambda_{k+p})$. Using that $f(\bm{A}) \succeq f(\widehat{\bm{A}}_{q,k+p})$ we get
\[
    \tr(f(\bm{A}))-\tr(f(\widehat{\bm{A}}_{q,k+p})) = \|f(\bm{A})-f(\widehat{\bm{A}}_{q,k+p})\|_*.
\]
Hence, Theorem~\ref{theorem:nuclear_opmon_expectation} and Theorem~\ref{theorem:nuclear_opmon_probability} yield probabilistic bounds for the error of this trace approximation. 

It is instructive to compare our results with the bounds from~\cite{saibaba} for the special case $f(x) = \log(1+x)$. In particular, Theorem 1 from~\cite{saibaba}
states that
\begin{align}\label{eq:saibaba_expectationbound}
\begin{split}
    &\mathbb{E}\left[\tr\left(\log(\bm{I}+\bm{A})\right)-\tr\left(\log(\bm{I}+\bm{Q}^T \bm{A} \bm{Q})\right)\right] \\
    \leq & \tr\left(\log(\bm{I} + \bm{\Lambda}_2)\right) + \tr\left(\log(\bm{I} + \gamma^{2q-1}C \bm{\Lambda}_2)\right),
\end{split}
\end{align}
where $\bm{Q}$ is an orthonormal basis for $\range(\bm{A}^q\bm{\Omega})$, and \begin{equation*}
    C = \frac{e^2(k+p)}{(p+1)(p-1)} \left(\frac{1}{2\pi(p+1)}\right)^{\frac{2}{p+1}}(\sqrt{n-k} + \sqrt{k+p} + \sqrt{2})^2.
\end{equation*}
Constructing $\bm{Q}^T \bm{A} \bm{Q}$ requires a total of $(q+1)(k+p)$ mvps with $\bm{A}$. On the other hand, within the same budget one obtains the more accurate low-rank approximation $\widehat{\bm{A}}_{q+1,k+p}$. 
This also translates into tighter probabilistic bounds for trace estimation. To see this, note that Theorem~\ref{theorem:nuclear_opmon_expectation} gives
\[
     \mathbb{E}\left[\tr\big(\log(\bm{I}+\bm{A})\big)-\tr\big(\log(\bm{I}+\widehat{\bm{A}}_{q+1,k+p})\big) \right] \leq \left(1 +   \frac{\gamma^{2q} k}{p-1}\right) \tr\left(\log(\bm{I} + \bm{\Lambda}_2)\right).
\]
The difference between~\eqref{eq:saibaba_expectationbound} and this bound satisfies
\begin{equation*}
    \sum\limits_{i=k+1}^n \left[ \log(1+\gamma^{2q-1} C \lambda_i) - \frac{\gamma^{2q}k \log(1+\lambda_i)}{p-1} \right]
    \approx \gamma^{2q-1} \sum\limits_{i=k+1}^n \Big(C- \frac{\gamma k}{p-1} \Big)\lambda_i
\end{equation*}
for $\lambda_{k+1}\approx 0$.\footnote{We use $\log(1+x)\approx x$ for small $x$.} Because $C \ge \frac{0.55 kn}{(p+1)(p-1)}$ and usually $n\gg p$,
this shows that we obtain a much tighter bound for our method compared to~\cite{saibaba}.
Similarly, it can be shown that our deviation bounds are tighter than those in \cite{saibaba}. Similar bounds for $f(x) = \frac{x}{x+1}$ exist in \cite[Theorem A.1]{saibabarandom}. By an identical argument we can show that our bounds are tighter than those in \cite{saibabarandom}.

In Figure~\ref{fig:trace_est} we compare our approach, funNystrom combined with~\eqref{eq:ourtrace}, with the method presented in \cite{saibaba} to approximate $\tr(\log(\bm{I} + \bm{A}))$. We choose $q = 1$ for both methods, since we have observed that increasing the rank of the low-rank approximation often yields a more accurate low-rank approximation than increasing $q$. A budget of $m$ mvps allows one to choose $k+p = m$ in Algorithm~\ref{alg:funnystrom} while one can only choose $k+p = m/2$ in the method from~\cite{saibaba}. This explains the better performance of funNystrom observed in Figure~\ref{fig:trace_est}; a similar observation has been made in \cite[Section 3]{AHutchpp}.

\begin{figure}
\begin{subfigure}{.5\textwidth}
  \centering
  \includegraphics[width=\linewidth]{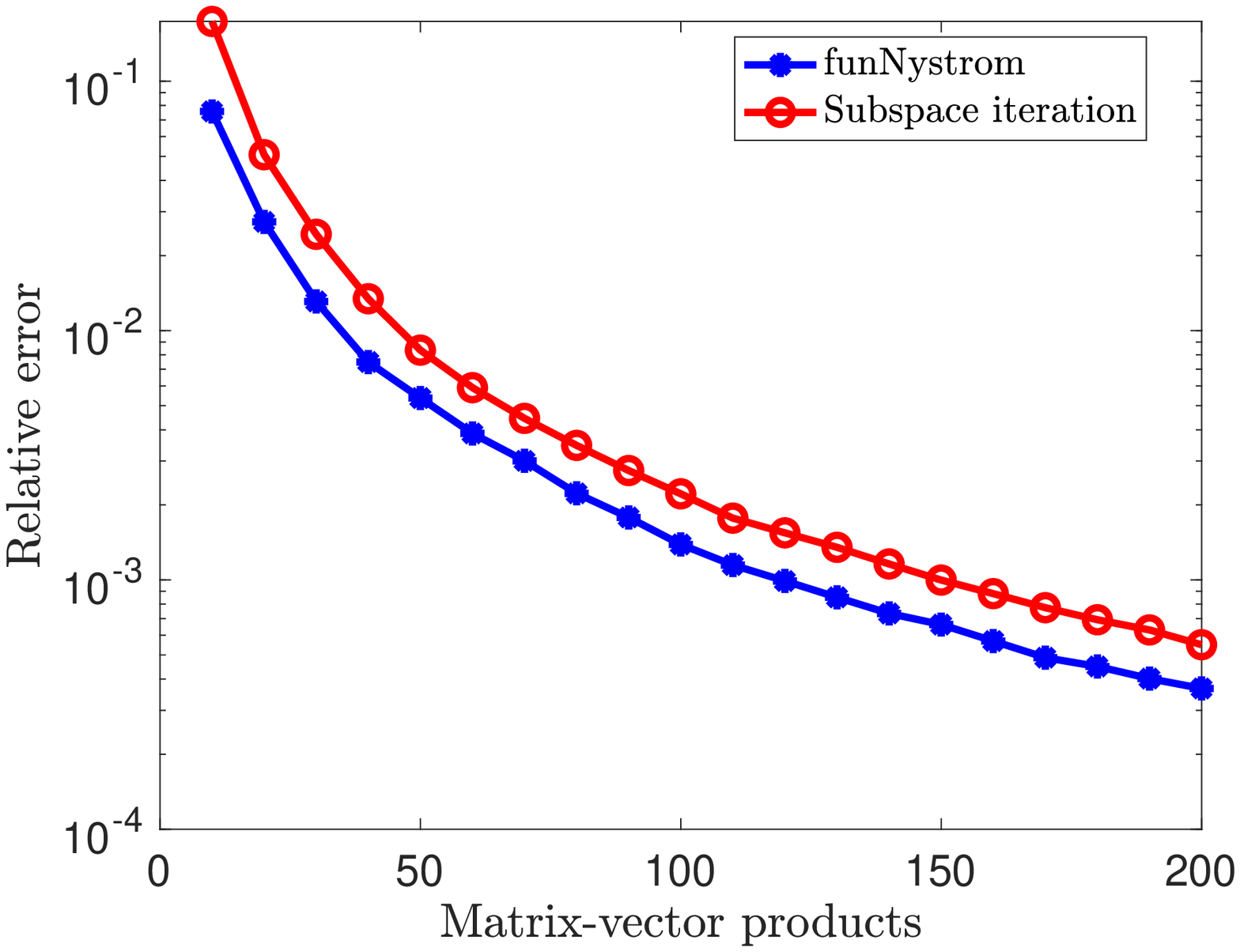}  
  \caption{$\bm{A}_{\text{alg}}$ defined in \eqref{eq:exponential_decay} with $s = 100$ and $c = 2$. }
\end{subfigure}
\begin{subfigure}{.5\textwidth}
  \centering
  \includegraphics[width=\linewidth]{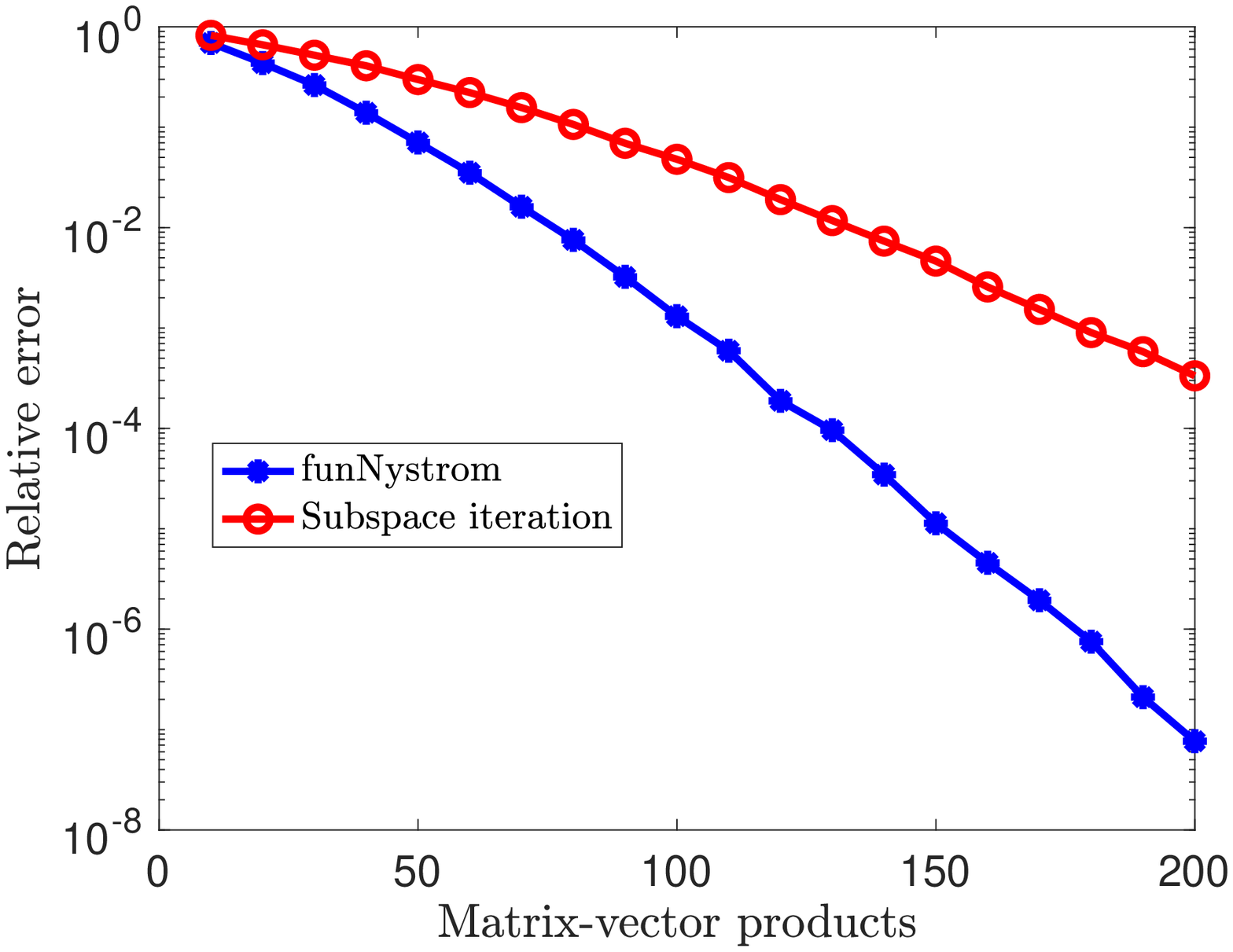}  
  \caption{$\bm{A}_{\text{exp}}$ defined in \eqref{eq:exponential_decay} with $s = 100$ and $\gamma = 0.9$.}
\end{subfigure}

\caption{Approximation of $\tr(\log(\bm{I}+\bm{A}))$ using Algorithm~\ref{alg:funnystrom} (blue) and the method presented in \cite{saibaba} (red). The x-axis represents the number of mvps performed with $\bm{A}$ to obtain the approximation, and the y-axis represents the relative error of the approximation.}
\label{fig:trace_est}
\end{figure}

\subsection{funHutch++}


A popular way to approximate the trace of an SPSD  matrix $\bm{B} \in \mathbb R^{n\times n}$ is to apply a stochastic estimator of the form
\begin{equation} \label{eq:hutchinson}
     \tr(\bm{B})\approx \tr_m(\bm{B}) := \frac{1}{m}\tr\left(\bm{\Phi}^T \bm{B} \bm{\Phi}\right)
\end{equation}
for an $n\times m$ random matrix $\bm{\Phi}$; in the following, we assume that $\bm{\Phi}$ is Gaussian random. To achieve a relative error of $\varepsilon$ (that is, $|\tr(\bm{B}) - \tr_m(\bm{B})| \le \varepsilon \tr(\bm{B})$ with high probability) 
one generally needs to choose $m \sim \varepsilon^{-2}$; see \cite{cortinoviskressner, dudley2020monte,improvedbounds, roosta, ubarusaad, ubaru2017applications}. Hutch++ \cite[Algorithm 1]{hutchpp} reduces this number to $m \sim \varepsilon^{-1}$ by combining~\eqref{eq:hutchinson} with (randomized) low-rank approximation. More specifically, given a budget of $m$ mvps with $\bm{B}$, Hutch++\footnote{Hutch++ as presented in \cite{hutchpp} uses Rademacher matrices instead of Gaussian random matrices. In order to remain consistent with the rest of this paper we choose  all random matrices to be Gaussians, which incurs no significant difference in the theory or numerical performance of Hutch++.} spends $2m/3$ mvps to construct a rank-$m/3$ approximation $\widehat{\bm{B}}$ based on the randomized SVD \cite{rsvd}. The remaining $m/3$ mvps are used to estimate the trace of the difference $\bm{B} - \widehat{\bm{B}}$ using~\eqref{eq:hutchinson}.

In~\cite[Section 3]{AHutchpp} a variant called Nyström++ is presented, which -- instead of the randomized SVD -- constructs a rank-$m/2$ Nyström approximation $\widehat{\bm{B}}$ using $m/2$ mvps. The remaining $m/2$ mvps are used to 
computed the correction $\tr_{m/2}(\bm{B}-\widehat{\bm{B}})$.
Nyström++ retains the property of Hutch++ that only $m \sim  \varepsilon^{-1}$ mvps are needed to obtain a relative error of $\varepsilon$, and it often provides better numerical performance. We refer to~\cite{chen,jiang2021optimal} for further variants of Hutch++.

When $\bm{B} = f(\bm{A})$ for SPSD $\bm{A}$ and a monotonically increasing function $f$ satisfying $f(0) = 0$, we can derive a cheaper version of Nyström++ by letting $\widehat{\bm{B}} = f(\widehat{\bm{A}}_{q,r})$ and thus considering the approximation
\begin{equation*}
    \tr(\bm{B}) \approx \tr(f(\widehat{\bm{A}}_{q,r})) +  \tr_{\ell}(f(\bm{A})-f(\widehat{\bm{A}}_{q,r}));
\end{equation*}
see also Algorithm~\ref{alg:funhpp}. The first part of this approximation bypasses the need for performing mvps with $f(\bm{A})$. \begin{algorithm}[t]
\caption{funNystr\"om++}
\label{alg:funhpp}
\textbf{input:} SPSD $\bm{A} \in \mathbb{R}^{n \times n}$. Rank $r$. Number of subspace iterations $q$. Number of samples in stochastic trace estimator $\ell$. Increasing function $f:[0,\infty) \mapsto [0,\infty)$ satisfying $f(0)=0$.\\
\textbf{output:} An approximation $\tr_{r,\ell}^{\mathsf{fn++}}(f(\bm{A})) \approx \tr(f(\bm{A}))$.
\begin{algorithmic}[1]
    \State Obtain a low rank approximation $f(\widehat{\bm{A}}_{q,r})$ using Algorithm~\ref{alg:funnystrom}. \label{line:lowrank}
    \State Compute $t_1 = \tr(f(\widehat{\bm{A}}_{q,r}))$.
    \State Compute $t_2 = \tr_{\ell}(f(\bm{A}) - f(\widehat{\bm{A}}_{q,r}))$.
    \State \textbf{return} $\tr_{r,\ell}^{\mathsf{fn++}}(f(\bm{A})) = t_1 + t_2$.
\end{algorithmic}
\end{algorithm}
The second part of the approximation still requires to perform or approximate such mvps via, e.g., the Lanczos method. 
Note that Algorithm~\ref{alg:funhpp} coincides with Nyström++ for $f(x) = x$. 
The following theorem extends the theoretical result on Hutch++ from~\cite[Theorem 1.1]{hutchpp} to Algorithm~\ref{alg:funhpp}.
\begin{theorem}\label{theorem:funnpp}
Let $f$ be operator monotone with $f(0) = 0$, $\bm{A}$ SPSD, and $\delta \in (0,1/2]$. Then one can choose $r = \ell \sim \sqrt{\log(\delta^{-1})}\varepsilon^{-1} + \log(\delta^{-1})$ such that the output of Algorithm~\ref{alg:funhpp} satisfies for any $q \geq 2$ the error bound
\begin{equation*}
    |\tr_{r,\ell}^{\mathsf{fn++}}(\bm{A})-\tr(f(\bm{A}))| \leq \varepsilon \tr(f(\bm{A}))
\end{equation*}
with probability at least $1-\delta$.
\end{theorem}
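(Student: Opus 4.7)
The plan is to reduce the argument to the classical Hutch++ analysis of \cite{hutchpp}, substituting the funNystr\"om approximation for the randomized SVD as the variance-reducing low-rank sketch. Setting $\bm{E} := f(\bm{A}) - f(\widehat{\bm{A}}_{q,r})$ and using that $t_1 = \tr(f(\widehat{\bm{A}}_{q,r}))$ is computed exactly, the global error telescopes to
\[
\tr^{\mathsf{fn++}}_{r,\ell}(f(\bm{A})) - \tr(f(\bm{A})) = \tr_\ell(\bm{E}) - \tr(\bm{E}),
\]
and by~\eqref{eq:approxfrombelow} the residual $\bm{E}$ is SPSD; so it suffices to show that the Hutchinson estimator applied to $\bm{E}$ is sufficiently accurate.

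The first step is to condition on the Gaussian sketch $\bm{\Omega}$ used in funNystr\"om and apply a standard Hanson--Wright/Gaussian chaos bound (as in \cite[Lemma 2]{hutchpp}) to the independent sketch $\bm{\Phi}$, yielding with probability at least $1-\delta/2$
\[
|\tr_\ell(\bm{E}) - \tr(\bm{E})| \lesssim \sqrt{\log(2/\delta)/\ell}\,\|\bm{E}\|_F
\]
whenever $\ell \gtrsim \log(1/\delta)$. The second step is to control $\|\bm{E}\|_F$ via Theorem~\ref{theorem:frobenius_opmon_probability} with target rank $k := \lfloor r/2\rfloor$, oversampling $p := r-k$, and $t,u$ of order $\sqrt{\log(1/\delta)}$ so that $2t^{-p} + e^{-u^2/2} \le \delta/2$; this is where the additive $\log(1/\delta)$ term in $r$ is consumed, since the failure probability requires $p \gtrsim \log(1/\delta)$. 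Using the hypothesis $q\ge 2$ to absorb $\gamma^{q-3/2}\le 1$, this gives with probability $\ge 1-\delta/2$
\[
\|\bm{E}\|_F \le C\,\mathrm{polylog}(1/\delta)\bigl(\|f(\bm{\Lambda}_2)\|_F + \|f(\bm{\Lambda}_2)\|_2\bigr).
\]

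The third step converts these tail norms into the actual trace: because $f(\bm{A})$ is SPSD and $f$ is monotone, $k\,f(\lambda_{k+1})\le \tr(f(\bm{A}))$, and therefore $\|f(\bm{\Lambda}_2)\|_F^2 \le f(\lambda_{k+1})\tr(f(\bm{A})) \le \tr(f(\bm{A}))^2/k$ while $\|f(\bm{\Lambda}_2)\|_2 = f(\lambda_{k+1}) \le \tr(f(\bm{A}))/k$. A union bound over the two failure events then produces, with probability at least $1-\delta$, a total error of order $\sqrt{\log(1/\delta)/(\ell k)}\,\tr(f(\bm{A}))$, so the choice $r = \ell = \Theta(\sqrt{\log(1/\delta)}/\varepsilon + \log(1/\delta))$ brings this below $\varepsilon\,\tr(f(\bm{A}))$, as required.

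The main obstacle will be handling the extra $\|f(\bm{\Lambda}_2)\|_2$ term from Theorem~\ref{theorem:frobenius_opmon_probability}, which has no counterpart in the randomized-SVD bound underlying the original Hutch++ analysis. Its prefactor $\sqrt{k+p}/(p+1) = O(1/\sqrt{r})$ multiplied by $\|f(\bm{\Lambda}_2)\|_2 \le \tr(f(\bm{A}))/k$ only contributes $O(\tr(f(\bm{A}))/r^{3/2})$ to $\|\bm{E}\|_F$, which is dominated by the $\|f(\bm{\Lambda}_2)\|_F$ contribution; hence the only real care is tracking the polylogarithmic prefactors so that the $\log(1/\delta)$ dependence enters as a square root in the leading $1/\varepsilon$ term, matching the claimed complexity.
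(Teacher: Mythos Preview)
Your proposal is correct and follows essentially the same route as the paper: reduce the error to the Hutchinson term on the SPSD residual $\bm{E}=f(\bm{A})-f(\widehat{\bm{A}}_{q,r})$, invoke Theorem~\ref{theorem:frobenius_opmon_probability} with $p\approx k\approx r/2$ to bound $\|\bm{E}\|_F$, convert the tail norm to $\tr(f(\bm{A}))/\sqrt{k}$ via the standard inequality (which is exactly \cite[Lemma~3.1]{hutchpp}), and finish with the Hanson--Wright concentration bound for $\tr_\ell$. The only cosmetic difference is the parameter bookkeeping: the paper fixes $t=e$, $u=\sqrt{2k}$ so that the prefactor in Theorem~\ref{theorem:frobenius_opmon_probability} becomes an absolute constant and the failure probability $3e^{-k}$ forces $k\gtrsim\log(1/\delta)$, whereas you let $t,u\sim\sqrt{\log(1/\delta)}$ and carry polylog factors; your own caveat about tracking those factors to recover the $\sqrt{\log(1/\delta)}$ dependence is exactly the point where the paper's cleaner choice pays off.
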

\begin{proof}
Let $r = 2k$ for $k > 4$. Applying Theorem~\ref{theorem:frobenius_opmon_probability}, with $p = k,t = e, u = \sqrt{2k}$ and using $\gamma \leq 1$, it follows that
\begin{equation*}
    \|f(\bm{A})-f(\widehat{\bm{A}}_{q,r})\|_F \leq 45\sqrt{\sum\limits_{i = k+1}^{n}f(\lambda_i)^2}
\end{equation*}
holds with probability at least $1-3e^{-k}$.
Thus, by \cite[Lemma 3.1]{hutchpp} we have with probability at least $1-3e^{-k}$ that \begin{equation*}
    \|f(\bm{A})-f(\widehat{\bm{A}}_{q,r})\|_F \leq \frac{45}{\sqrt{k}}\tr(f(\bm{A})).
\end{equation*}
Following the arguments in the proof of~\cite[Theorem 3.4]{AHutchpp}, there are constants $c,C$ such that for
$r = \ell = 2k > c \log(\delta^{-1})$, we have
\begin{equation*}
    |\tr_{r,\ell}^{\mathsf{fn++}}(f(\bm{A})) - \tr(f(\bm{A}))| \leq C k^{-1} \sqrt{\log(\delta^{-1})} \tr(f(\bm{A}))
\end{equation*}
 with probability at least $1-\delta$.
Setting $k \geq \varepsilon^{-1}C\sqrt{\log(\delta^{-1})}$ completes the proof. 
\end{proof}
We compare Algorithm~\ref{alg:funhpp} with Nyström++ from \cite{AHutchpp}. For Algorithm~\ref{alg:funhpp} we let $\ell = 6, 12, \ldots,60$, $r = 60, 120,\ldots, 600$, and for Nyström++ we set $m = 12,24,\ldots,120$. We approximate mvps with $f(\bm{A})$ using 10 iterations of the Lanczos method. Hence, for both the Nyström++ algorithm and Algorithm~\ref{alg:funhpp} we perform $120,240, \ldots, 1200$ mvps with $\bm{A}$. The obtained results are presented in Figure~\ref{fig:funhpp}. They indicate that Algorithm~\ref{alg:funhpp} with $q = 1$ and $q = 2$ can perform significantly better than Nyström++. It is interesting to note that our algorithms performs well for $q = 1$ even though this choice is not covered by the result of Theorem~\ref{theorem:funnpp}.

\begin{figure}
\begin{subfigure}{.5\textwidth}
  \centering
  \includegraphics[width=\linewidth]{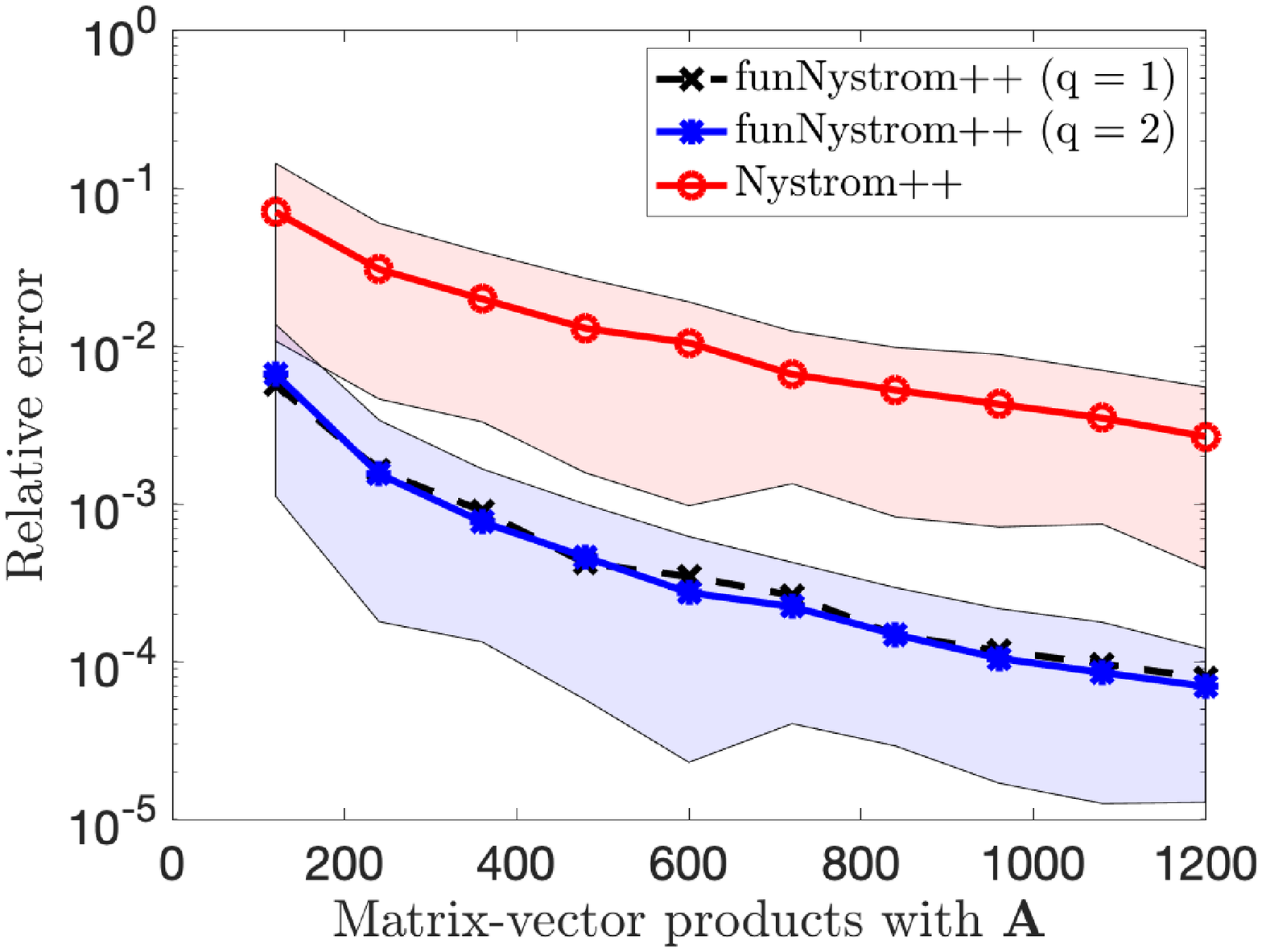}  
  \caption{$\bm{A}_{\text{alg}}$ defined in \eqref{eq:exponential_decay} with $s = 100$, $c=2$ and $f(x) = \log(1+x)$.}
\end{subfigure}
\begin{subfigure}{.5\textwidth}
  \centering
  \includegraphics[width=\linewidth]{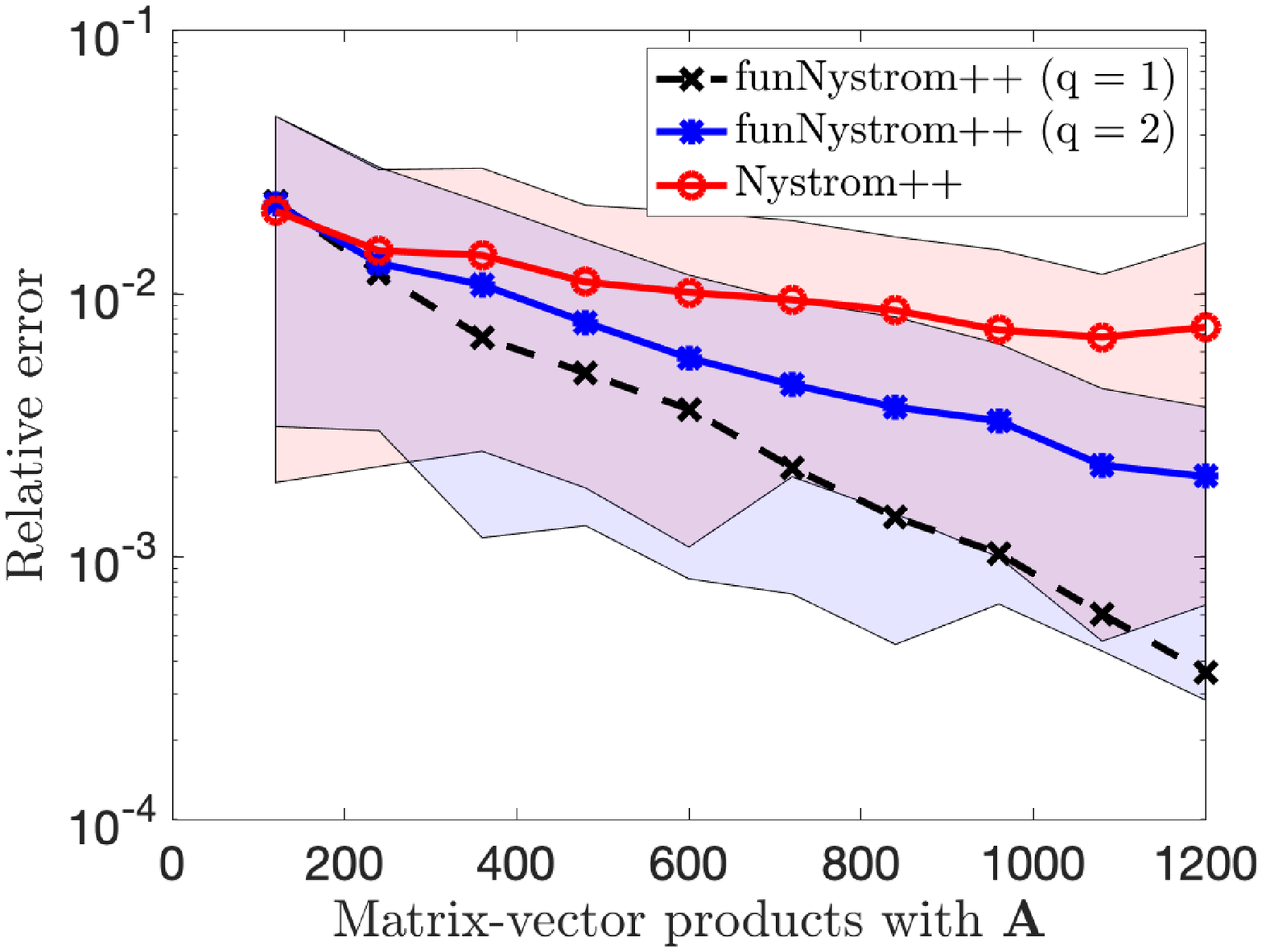}  
  \caption{$\bm{A}_{\text{exp}}$ defined in \eqref{eq:exponential_decay} with $s = 1, \gamma = e^{-\frac{1}{100}}$ and $f(x) = \frac{x}{x+0.1}$.}
\end{subfigure}

\caption{Approximation of $\tr(f(\bm{A}))$ using Algorithm~\ref{alg:funhpp} with $q = 1$ (black), Algorithm~\ref{alg:funhpp} with $q = 2$ (blue) and the Nyström++ algorithm presented in \cite{AHutchpp} (red). The x-axis represents the number of mvps performed with $\bm{A}$ to obtain the approximation, and the y-axis represents the relative error of the approximation. The shaded regions display the $5^{\text{th}}$ and $95^{\text{th}}$ percentiles.}
\label{fig:funhpp}
\end{figure}

\section{Conclusion}
funNystr\"om is a new and simple method for obtaining a low-rank approximation to 
$f(\bm{A})$ for an SPSD matrix $\bm{A}$. The experimental and theoretical evidence presented in this work seems to suggest that funNystr\"om is currently the method of choice for an operator monotone function $f:[0,\infty) \mapsto [0,\infty)$ with $f(0) = 0$. In contrast to standard randomized methods applied to $f(\bm{A})$, our method does not require exact or approximate matrix-vector products with $f(\bm{A})$. We have demonstrated that also other quantities associated with matrix functions, such as the trace, can be cheaply computed via funNystr\"om.

\begin{paragraph}{Acknowledgments} 
We thank the referees and Arvind 
Saibaba for helpful comments on this work.
\end{paragraph}

\bibliographystyle{siam}
\bibliography{bibliography}

\begin{appendices}

\section{Proof of Lemma~\ref{lemma:operator_norm_structural1}}\label{appendix:operator_norm}

To prove the structural bound of Lemma~\ref{lemma:operator_norm_structural1} for an arbitrary unitarily invariant norm $\|\cdot\|$, we make use of the following auxiliary result.

\begin{lemma}[{\cite[Theorem 2.1]{lee}}]\label{lemma:rotfeld}
Let $f : [0,\infty)\to [0,\infty)$ be concave. Then given a partitioned SPSD matrix $\begin{bmatrix} \bm{B} & \bm{X} \\ \bm{X}^T & \bm{C} \end{bmatrix}$ with square $\bm B$ and $\bm C$, one has
\begin{equation*}
    \left\|f\left(\begin{bmatrix} \bm{B} & \bm{X} \\ \bm{X}^T & \bm{C} \end{bmatrix}\right)\right\| \leq \|f(\bm{B})\| + \|f(\bm{C})\|.
\end{equation*}
\end{lemma}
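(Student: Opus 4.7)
My plan is to reduce the block-matrix inequality to the classical Rotfeld-type subadditivity inequality
\begin{equation*}
\|f(P+Q)\| \le \|f(P)\| + \|f(Q)\|, \qquad P,Q\succeq 0,
\end{equation*}
valid for any concave $f:[0,\infty)\to[0,\infty)$ and any unitarily invariant norm; this is the Ando--Zhan/Bourin--Uchiyama strengthening of Rotfeld's original trace inequality.

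First, I would factor the SPSD block matrix as $M = T^T T$, where the rectangular matrix $T = [T_1,\, T_2]$ is split conformably with the block structure of $M$. Reading off blocks yields $\bm B = T_1^T T_1$, $\bm C = T_2^T T_2$, and $\bm X = T_1^T T_2$. The workhorse observation is the classical spectral identity: $T^T T$ and $T T^T$ share the same nonzero eigenvalues. Since $TT^T = T_1 T_1^T + T_2 T_2^T$, and since $f(0)=0$ in every setting in which this lemma is invoked in the paper, $f(M) = f(T^T T)$ and $f(T_1 T_1^T+T_2 T_2^T)$ have the same nonzero eigenvalues and hence coincide under every unitarily invariant norm.

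With this reformulation in hand, applying Rotfeld subadditivity to $P = T_1 T_1^T$ and $Q = T_2 T_2^T$ gives
\begin{equation*}
\|f(M)\| \;=\; \|f(T_1 T_1^T + T_2 T_2^T)\| \;\le\; \|f(T_1 T_1^T)\| + \|f(T_2 T_2^T)\|.
\end{equation*}
Invoking the spectral identity once more for each summand yields $\|f(T_i T_i^T)\| = \|f(T_i^T T_i)\|$, and substituting $T_1^T T_1 = \bm B$ and $T_2^T T_2 = \bm C$ completes the proof.

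The main obstacle is the Rotfeld-type subadditivity itself, whose proof is non-trivial: it rests on a weak majorization relation between the eigenvalues of $P+Q$ and those of $P$ and $Q$ (in the spirit of Ky Fan's maximum principle) combined with the scalar subadditivity $f(a+b)\le f(a)+f(b)$ implied by concavity and non-negativity of $f$. Assuming that classical result, every remaining step is an elementary rearrangement of SPSD factorizations.
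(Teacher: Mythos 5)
The paper does not prove this lemma at all --- it is quoted verbatim as Theorem 2.1 of the cited reference [lee] --- so there is no internal proof to compare against; what you have written is a reconstruction of the literature result. Your reduction is the standard one and is sound under the extra hypothesis you impose: factoring $M = T^TT$ with $T = M^{1/2} = [T_1\ T_2]$ gives $B = T_1^TT_1$, $C = T_2^TT_2$, and $TT^T = T_1T_1^T + T_2T_2^T$ has the same spectrum as $M$, after which the Rotfel'd/Bourin--Uchiyama subadditivity $\|f(P+Q)\|\le\|f(P)+f(Q)\|\le\|f(P)\|+\|f(Q)\|$ and the equality of the nonzero spectra of $T_iT_i^T$ and $T_i^TT_i$ finish the argument. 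This is essentially how the decomposition step in the cited proof works as well, so the route is not genuinely different; the main thing you are taking on faith (Bourin--Uchiyama) is of comparable depth to the lemma itself, which is acceptable given that the paper also only cites it.

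The one point to be precise about: the lemma as stated assumes only that $f$ is concave and nonnegative, not that $f(0)=0$, and your argument genuinely needs $f(0)=0$ at the step $\|f(T_iT_i^T)\| = \|f(T_i^TT_i)\|$. The matrix $T_1T_1^T$ is $n\times n$ while $T_1^TT_1 = \bm B$ is $k\times k$, so $f(T_1T_1^T)$ carries $n-k$ additional singular values equal to $f(0)$; if $f(0)>0$ these inflate, e.g., the nuclear norm, and the identification with $\|f(\bm B)\|$ fails. You flag this restriction explicitly, and it is harmless for the paper --- the lemma is invoked only in the proof of Lemma~\ref{lemma:operator_norm_structural1} under Setting~\ref{setting}, where $f(0)=0$ is assumed --- but strictly speaking you have proved a special case of the stated lemma rather than the lemma itself. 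To recover the full statement one needs the argument of the cited reference, which handles general $f(0)\ge 0$ via a unitary decomposition of the partitioned matrix rather than the spectrum-transfer step you use.
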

\begin{proof}[Proof of Lemma~\ref{lemma:operator_norm_structural1}]
From~\eqref{eq:nystromproj} it follows that
\[
    \bm{A}-\hat{\bm{A}}_{q,k+p} =  \bm{A}^{1/2}(\bm{I}-\bm{P}_{\bm Y})\bm{A}^{1/2} = 
    \bm{U}\bm{\Lambda}^{1/2}(\bm{I} - \bm{P}_{\widetilde{\bm Y}}) \bm{\Lambda}^{1/2}\bm{U}^T,
\]
where we set $\bm Y = \bm{A}^{q-1/2}\bm{\Omega}$ and 
$\widetilde{\bm Y} = \bm{U}^T \bm Y$.
Combined with Lemma~\ref{lemma:loewner_trace}, this gives
\[
    \|f(\bm{A}) - f(\widehat{\bm{A}}_{q,k+p})\| \leq \|f(\bm{A}-\widehat{\bm{A}}_{q,k+p})\|
     = \|f(\bm{\Lambda}^{1/2}(\bm{I} - \bm{P}_{\widetilde{\bm Y}}) \bm{\Lambda}^{1/2})\|.
\]
As in the proof of Lemma~\ref{lemma:function_opmon_structural}, we set
$
    \bm{Z} = \widetilde{\bm Y} \bm{\Omega}_1^{\dagger} \bm{\Lambda}_1^{-(q-1/2)} = \begin{bmatrix} \bm{I} \\ \bm{F}\end{bmatrix}.
$
Using $\text{range}(\bm{Z}) \subseteq \text{range}(\widetilde{\bm Y})$, we obtain
$\bm{I}-\bm{P}_{\bm{Z}} \succeq \bm{I} - \bm{P}_{\widetilde{\bm Y}} \succeq \bm{0}$ and, in turn, $\bm{\Lambda}^{1/2}(\bm{I}-\bm{P}_{\bm{Z}})\bm{\Lambda}^{1/2} \succeq \bm{\Lambda}^{1/2}(\bm{I}- \bm{P}_{\widetilde{\bm Y}})\bm{\Lambda}^{1/2}\succeq \bm{0}$. Using 
Lemma~\ref{lemma:loewner_trace} (ii), this gives
\begin{equation} \label{eq:lalabound}
 \|f(\bm{\Lambda}^{1/2}(\bm{I}- \bm{P}_{\widetilde{\bm Y}})\bm{\Lambda}^{1/2})\|\leq \|f(\bm{\Lambda}^{1/2}(\bm{I}-\bm{P}_{\bm{Z}})\bm{\Lambda}^{1/2})\|.
\end{equation}
Exploiting
the $2\times 2$ block structure~\eqref{eq:projector_ineq} of the SPSD matrix $\bm{I}-\bm{P}_{\bm{Z}}$ and applying Lemma~\ref{lemma:rotfeld} yields
\begin{align*}
 &\|f(\bm{\Lambda}^{1/2}(\bm{I}-\bm{P}_{\bm{Z}}) \bm{\Lambda}^{1/2})\| \\
 \leq & \|f(\bm{\Lambda}_1^{1/2}(\bm{I}-(\bm{I}+\bm{F}^T \bm{F})^{-1})\bm{\Lambda}_1^{1/2})\|
    +\|f(\bm{\Lambda}_2^{1/2}(\bm{I}-\bm{F}(\bm{I}+\bm{F}^T \bm{F})^{-1}\bm{F}^T)\bm{\Lambda}_2^{1/2})\|.
\end{align*}
The proof is completed using the inequalities
\begin{align*}
 &\|f(\bm{\Lambda}_1^{1/2}(\bm{I}-(\bm{I}+\bm{F}^T \bm{F})^{-1})\bm{\Lambda}_1^{1/2})\| \le
 \|f(\bm{\Lambda}_1^{1/2}\bm{F}^T \bm{F} \bm{\Lambda}_1^{1/2})\| \\
 & \|f(\bm{\Lambda}_2^{1/2}(\bm{I}-\bm{F}(\bm{I}+\bm{F}^T \bm{F})^{-1}\bm{F}^T)\bm{\Lambda}_2^{1/2})\|  \le 
 \|f(\bm{\Lambda}_2)\|,
\end{align*}
which are derived from the inequalities in~\eqref{eq:projector_ineq}
with the same arguments used for~\eqref{eq:lalabound}.
\end{proof}

\section{Structural bounds for general Schatten norms} \label{app:schatten}

The Schatten-$s$ norm $\|\cdot\|_{(s)}$ for $1\le s \le \infty$ is a unitarily invariant norm defined as the $\ell^s$ norm of the vector of singular values of a matrix. It includes the Frobenius norm ($s = 2$), the nuclear norm ($s =1$), as well as the operator norm ($s=\infty$). Deriving a structural bound from the result of Lemma~\ref{lemma:operator_norm_structural1} for general $\|\cdot\|_{(s)}$ is not straightforward; the result for $s=2$ crucially depends on Lemma~\ref{lemma:probabilistic3}, for which we do not know an extension for general $s$. To circumvent this difficulty, the bound involves $f_+'(0)$, the right derivative of $f$ at $0$, which needs to be assumed finite. 
\begin{theorem}\label{theorem:schatten_norms}
Under Setting~\ref{setting}, assume that $\rank(\bm{\Omega}_1) = k$ and $f_+'(0) < \infty$. Then
\begin{align*}
    &\|f(\bm{A})-f(\widehat{\bm{A}}_{q,k+p})\|_{(s)} \leq \|f(\bm{\Lambda}_2)\|_{(s)} +\cdots  \\
    &  f'(0) \begin{cases}
        \|\bm{\Lambda}_2^{1/2} \bm{\Omega}_2 \bm{\Omega}_1^{\dagger}\|_{(2s)}^2 \quad &\text{ if } q = 1;\\
        \min\left\{\gamma^{2(q-1)} \|\bm{\Lambda}_2^{1/2} \bm{\Omega}_2 \bm{\Omega}_1^{\dagger}\|_{(2s)}^2 , \gamma^{q-3/2}\|\bm{\Lambda}_2 \bm{\Omega}_2 \bm{\Omega}_1^{\dagger}\|_{(s)}\right\} \quad &\text{ if } q \geq 2.
    \end{cases}
\end{align*}
\end{theorem}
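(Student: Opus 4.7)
My plan is to base the argument on the structural bound of Lemma~\ref{lemma:operator_norm_structural1}, specialized to the Schatten-$s$ norm, which gives
\begin{equation*}
\|f(\bm{A}) - f(\widehat{\bm{A}}_{q,k+p})\|_{(s)} \leq \|f(\bm{\Lambda}_2)\|_{(s)} + \|f(\bm{G})\|_{(s)},
\end{equation*}
where I abbreviate $\bm{G} := \bm{\Lambda}_1^{1/2}\bm{F}^T\bm{F}\bm{\Lambda}_1^{1/2}$ with $\bm{F} = \bm{\Lambda}_2^{q-1/2}\bm{\Omega}_2\bm{\Omega}_1^{\dagger}\bm{\Lambda}_1^{-(q-1/2)}$ as in Setting~\ref{setting}. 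The first term already matches the stated bound, so the work reduces to controlling $\|f(\bm{G})\|_{(s)}$ in two different ways. The key new ingredient over the earlier results is the finiteness assumption $f_+'(0)<\infty$: since $f$ is concave on $[0,\infty)$ with $f(0)=0$, the tangent-line inequality at the origin yields $f(x)\leq f_+'(0)\,x$ for every $x\geq 0$. Applied eigenvalue-wise to the SPSD matrix $\bm{G}$, this gives $f(\bm{G})\preceq f_+'(0)\bm{G}$, and Lemma~\ref{lemma:loewner_trace}(i) converts the Loewner bound into the Schatten-norm bound $\|f(\bm{G})\|_{(s)}\leq f_+'(0)\|\bm{G}\|_{(s)}$. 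What remains is a purely deterministic bound on $\|\bm{G}\|_{(s)}$.

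For the first alternative (which also handles $q=1$), I would use the Gram identity $\bm{G} = (\bm{F}\bm{\Lambda}_1^{1/2})^T(\bm{F}\bm{\Lambda}_1^{1/2})$, so that $\|\bm{G}\|_{(s)} = \|\bm{F}\bm{\Lambda}_1^{1/2}\|_{(2s)}^2 = \|\bm{\Lambda}_2^{q-1/2}\bm{\Omega}_2\bm{\Omega}_1^{\dagger}\bm{\Lambda}_1^{1-q}\|_{(2s)}^2$. For $q=1$ the exponents collapse to give $\|\bm{\Lambda}_2^{1/2}\bm{\Omega}_2\bm{\Omega}_1^{\dagger}\|_{(2s)}^2$ directly. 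For $q\geq 2$ I would split $\bm{\Lambda}_2^{q-1/2} = \bm{\Lambda}_2^{q-1}\cdot\bm{\Lambda}_2^{1/2}$ and apply the Schatten-Hölder inequality $\|\bm{A}\bm{B}\bm{C}\|_{(2s)}\leq\|\bm{A}\|_\infty\|\bm{B}\|_{(2s)}\|\bm{C}\|_\infty$ to the three-factor decomposition $\bm{\Lambda}_2^{q-1}\cdot(\bm{\Lambda}_2^{1/2}\bm{\Omega}_2\bm{\Omega}_1^{\dagger})\cdot\bm{\Lambda}_1^{1-q}$. The operator-norm factors produce $\lambda_{k+1}^{q-1}\lambda_k^{1-q}=\gamma^{q-1}$, and squaring returns the first branch $\gamma^{2(q-1)}\|\bm{\Lambda}_2^{1/2}\bm{\Omega}_2\bm{\Omega}_1^{\dagger}\|_{(2s)}^2$.

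For the second alternative (only $q\geq 2$), the strategy is a different Hölder splitting of the same expression: I would use $\bm{\Lambda}_2^{q-1/2}=\bm{\Lambda}_2^{q-3/2}\cdot\bm{\Lambda}_2$ so that a full $\bm{\Lambda}_2$ attaches to $\bm{\Omega}_2\bm{\Omega}_1^{\dagger}$, leaving the gap-ratio prefactor $\lambda_{k+1}^{q-3/2}\lambda_k^{3/2-q}=\gamma^{q-3/2}$ as desired. To reconcile the natural squared-$(2s)$-norm output of the Gram identity with the (unsquared) $\|\bm{\Lambda}_2\bm{\Omega}_2\bm{\Omega}_1^{\dagger}\|_{(s)}$ appearing in the target bound, I would invoke the elementary singular-value inequality $\|\bm{X}\bm{X}^T\|_{(s)}\leq\|\bm{X}\|_\infty\|\bm{X}\|_{(s)}$ (which follows from $\sum_i\sigma_i(\bm{X})^{2s}\leq\max_i\sigma_i(\bm{X})^s\sum_i\sigma_i(\bm{X})^s$) applied with $\bm{X}=\bm{F}\bm{\Lambda}_1^{1/2}$, and then bound $\|\bm{F}\bm{\Lambda}_1^{1/2}\|_\infty$ and $\|\bm{F}\bm{\Lambda}_1^{1/2}\|_{(s)}$ separately using the $\bm{\Lambda}_2^{q-3/2}\cdot\bm{\Lambda}_2\cdot$ split and Hölder.

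The main obstacle is precisely this second branch: the Gram factorization produces a squared $(2s)$-norm, and bridging it to a single $(s)$-norm without introducing spurious operator-norm factors of $\bm{\Omega}_2\bm{\Omega}_1^{\dagger}$ requires choosing the splitting with some care. Once $\|\bm{G}\|_{(s)}$ is bounded both ways, taking the minimum and combining with the reduction $\|f(\bm{G})\|_{(s)}\leq f_+'(0)\|\bm{G}\|_{(s)}$ and the $\|f(\bm{\Lambda}_2)\|_{(s)}$ term from Lemma~\ref{lemma:operator_norm_structural1} yields the stated inequality.
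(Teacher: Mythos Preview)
Your first branch (the bound $f_+'(0)\gamma^{2(q-1)}\|\bm{\Lambda}_2^{1/2}\bm{\Omega}_2\bm{\Omega}_1^{\dagger}\|_{(2s)}^2$, which also covers $q=1$) is exactly what the paper does: apply $f(x)\le f_+'(0)x$ inside the $\|f(\bm{G})\|_{(s)}$ term of Lemma~\ref{lemma:operator_norm_structural1}, use the Gram identity $\|\bm{G}\|_{(s)}=\|\bm{F}\bm{\Lambda}_1^{1/2}\|_{(2s)}^2$, and pull out $\gamma^{q-1}$ by H\"older. That part is fine.

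The second branch, however, does not go through as you describe it. There is first an algebraic slip: with $\bm{F}\bm{\Lambda}_1^{1/2}=\bm{\Lambda}_2^{q-3/2}(\bm{\Lambda}_2\bm{\Omega}_2\bm{\Omega}_1^{\dagger})\bm{\Lambda}_1^{1-q}$, the H\"older prefactor is $\lambda_{k+1}^{q-3/2}\lambda_k^{1-q}$, not $\lambda_{k+1}^{q-3/2}\lambda_k^{3/2-q}$, so it is \emph{not} equal to $\gamma^{q-3/2}$. More seriously, your route via $\|\bm{X}^T\bm{X}\|_{(s)}\le\|\bm{X}\|_\infty\|\bm{X}\|_{(s)}$ with $\bm{X}=\bm{F}\bm{\Lambda}_1^{1/2}$ inevitably produces
\[
\|\bm{G}\|_{(s)}\le \bigl(\lambda_{k+1}^{q-3/2}\lambda_k^{1-q}\bigr)^2\,\|\bm{\Lambda}_2\bm{\Omega}_2\bm{\Omega}_1^{\dagger}\|_\infty\,\|\bm{\Lambda}_2\bm{\Omega}_2\bm{\Omega}_1^{\dagger}\|_{(s)},
\]
and the unwanted factor $\|\bm{\Lambda}_2\bm{\Omega}_2\bm{\Omega}_1^{\dagger}\|_\infty$ (together with the leftover $\lambda_k^{-1}$) cannot be bounded by a constant structurally: it is random and can be arbitrarily large. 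This is precisely the ``spurious operator-norm factor'' you worried about, and the splitting you propose does not avoid it.

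The paper sidesteps this by \emph{not} working with $\bm{G}=\bm{\Lambda}_1^{1/2}\bm{F}^T\bm{F}\bm{\Lambda}_1^{1/2}$ for the second branch. It returns to the sharper intermediate bound inside the proof of Lemma~\ref{lemma:operator_norm_structural1}, namely the term $\|f(\bm{\Lambda}_1^{1/2}(\bm{I}-(\bm{I}+\bm{F}^T\bm{F})^{-1})\bm{\Lambda}_1^{1/2})\|_{(s)}$, observes that this matrix has the same nonzero eigenvalues as $(\bm{I}-\bm{P}_{\bm{Z}})\begin{bmatrix}\bm{\Lambda}_1&\\&\bm{0}\end{bmatrix}(\bm{I}-\bm{P}_{\bm{Z}})$, drops one projector to get $\|(\bm{I}-\bm{P}_{\bm{Z}})\begin{bmatrix}\bm{\Lambda}_1&\\&\bm{0}\end{bmatrix}\|_{(s)}$, and then invokes a known range-finder bound (Theorem~7 in~\cite{saibaba_subspace}) to obtain $\gamma^{q-3/2}\|\bm{\Lambda}_2\bm{\Omega}_2\bm{\Omega}_1^{\dagger}\|_{(s)}$ directly, with no Gram square and hence no extra operator-norm factor. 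The key missing idea in your proposal is this passage from the quadratic form in $\bm{F}$ to the projection $(\bm{I}-\bm{P}_{\bm{Z}})$.
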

\begin{proof}
Since $f$ is concave its maximal derivative is assumed at $0$. Together with $f(0) = 0$, this implies $\|f(\bm{B})\|_{(s)} \leq f'(0) \|\bm{B}\|_{(s)}$ for any square matrix $\bm B$. 
In particular,
\begin{align*}
    \|f(\bm{\Lambda}_1^{1/2}\bm{F}^T \bm{F} \bm{\Lambda}_1^{1/2})\|_{(s)} \leq & f'(0) \|\bm{\Lambda}_1^{1/2}\bm{F}^T \bm{F} \bm{\Lambda}_1^{1/2}\|_{(s)} \\
    =& f'(0)\|\bm{F} \bm{\Lambda}_1^{1/2}\|_{(2s)}^2 \leq f'(0)\gamma^{2(q-1)}\|\bm{\Lambda}_2^{1/2}\bm{\Omega}_2\bm{\Omega}_1^{\dagger}\|_{(2s)}^2.
\end{align*}
Applying Lemma~\ref{lemma:operator_norm_structural1} yields
\begin{equation} \label{eq:resultq1}
    \|f(\bm{A})-f(\widehat{\bm{A}}_{q,k+p})\|_{(s)} \leq \|f(\bm{\Lambda}_2)\|_{(s)} + f'(0)\gamma^{2(q-1)}\|\bm{\Lambda}_2^{1/2}\bm{\Omega}_2\bm{\Omega}_1^{\dagger}\|_{(2s)}^2,
\end{equation}
which already establishes the result for $q = 1$.

Now assume $q \geq 2$. Following the proof of Lemma~\ref{lemma:operator_norm_structural1} one shows
\begin{align*}
    \|f(\bm{A})-f(\widehat{\bm{A}}_{q,k+p})\|_{(s)} \leq &\|f(\bm{\Lambda}_2)\|_{(s)} \\
    + &\|f\left(\bm{\Lambda}_1^{1/2}(\bm{I}-(\bm{I}+\bm{F}^T \bm{F})^{-1})\bm{\Lambda}_1^{1/2}\right)\|_{(s)}.
\end{align*}
Note that the non-zero eigenvalues of $\bm{\Lambda}_1^{1/2}(\bm{I}-(\bm{I}+\bm{F}^T \bm{F})^{-1})\bm{\Lambda}_1^{1/2}$ and
\begin{equation*}
    (\bm{I}-\bm{P}_{\bm{Z}})\begin{bmatrix} \bm{\Lambda}_1 & \\ & \bm{0} \end{bmatrix} (\bm{I}-\bm{P}_{\bm{Z}})
\end{equation*}
are the same. Hence,
\begin{equation*}
    \|f\left(\bm{\Lambda}_1^{1/2}(\bm{I}-(\bm{I}+\bm{F}^T \bm{F})^{-1})\bm{\Lambda}_1^{1/2}\right)\|_{(s)} = \left\|f\left((\bm{I}-\bm{P}_{\bm{Z}})\begin{bmatrix} \bm{\Lambda}_1 & \\ & \bm{0} \end{bmatrix} (\bm{I}-\bm{P}_{\bm{Z}})\right)\right\|_{(s)}.
\end{equation*}
In turn,
\begin{align*}
    &\left\|f\left((\bm{I}-\bm{P}_{\bm{Z}})\begin{bmatrix} \bm{\Lambda}_1 & \\ & \bm{0} \end{bmatrix} (\bm{I}-\bm{P}_{\bm{Z}})\right)\right\|_{(s)} \leq f'(0) \left\|(\bm{I}-\bm{P}_{\bm{Z}})\begin{bmatrix} \bm{\Lambda}_1 & \\ & \bm{0} \end{bmatrix} (\bm{I}-\bm{P}_{\bm{Z}})\right\|_{(s)}\\
    \leq & f'(0)\left\|(\bm{I}-\bm{P}_{\bm{Z}})\begin{bmatrix} \bm{\Lambda}_1 & \\ & \bm{0} \end{bmatrix}\right\|_{(s)} \leq f'(0)\gamma^{q-3/2}\|\bm{\Lambda}_2\bm{\Omega}_2\bm{\Omega}_1^{\dagger}\|_{(s)},
\end{align*}
where the final inequality follows from \cite[Theorem 7]{saibaba_subspace}. Combined with the bound~\eqref{eq:resultq1}, this completes proof.
\end{proof}

Clearly, Theorem~\ref{theorem:schatten_norms} cannot be used when $f(x) = \sqrt{x}$. However, plugging $\|(\bm{\Lambda}_1^{1/2}\bm{F}^T\bm{F}\bm{\Lambda}_1^{1/2})^{1/2}\|_{(s)} = \|\bm{F}\bm{\Lambda}_1^{1/2}\|_{(s)}$ into the result of Lemma~\ref{lemma:operator_norm_structural1} for $f(x) = \sqrt{x}$ immediately gives
\begin{equation*}
    \|\bm{A}^{1/2} - \widehat{\bm{A}}_{q,k+p}^{1/2}\|_{(s)} \leq \|\bm{\Lambda}_2^{1/2}\|_{(s)} + \gamma^{q-1} \|\bm{\Lambda}_2^{1/2}\bm{\Omega}_2\bm{\Omega}_1^{\dagger}\|_{(s)}
\end{equation*}
as a structural bound.

\end{appendices}

\end{document}